\documentclass[english]{amsart}

\usepackage{amsfonts, amssymb, amsbsy, amsmath, mathrsfs, amsthm,enumerate, bm}

\usepackage{graphicx}
\usepackage{url}
\usepackage{xcolor}
\usepackage{hyperref}
\usepackage{cleveref}

\newcommand\dmu{\, d \mu}
\newcommand\dt{\, d t}
\newcommand\dnu{\, d \nu}
\newcommand\ds{\, d s}

\newcommand\R{\mathbb R}

\usepackage{mathrsfs}

\theoremstyle{definition}
\newtheorem{definition}{Definition}[section]

\newtheorem{remark}[definition]{Remark} 
\theoremstyle{plain}
\newtheorem{theorem}[definition]{Theorem}
\newtheorem{lemma}[definition]{Lemma}
\newtheorem{corollary}[definition]{Corollary}
\newtheorem{proposition}[definition]{Proposition}

\numberwithin{equation}{section}

\crefname{theorem}{Theorem}{Theorems}
\Crefname{theorem}{Theorem}{Theorems}
\crefname{lemma}{Lemma}{Lemmas}
\Crefname{lemma}{Lemma}{Lemmas}
\crefname{proposition}{Proposition}{Propositions}
\Crefname{proposition}{Proposition}{Propositions}
\crefname{corollary}{Corollary}{Corollaries}
\Crefname{corollary}{Corollary}{Corollaries}
\crefname{definition}{Definition}{Definitions}
\Crefname{definition}{Definition}{Definitions}
\crefname{remark}{Remark}{Remarks}
\Crefname{remark}{Remark}{Remarks}
\crefname{example}{Example}{Examples}
\Crefname{example}{Example}{Examples}

\def\Xint#1{\mathchoice
   {\XXint\displaystyle\textstyle{#1}}%
   {\XXint\textstyle\scriptstyle{#1}}%
   {\XXint\scriptstyle\scriptscriptstyle{#1}}%
   {\XXint\scriptscriptstyle\scriptscriptstyle{#1}}%
   \!\int}
\def\XXint#1#2#3{{\setbox0=\hbox{$#1{#2#3}{\int}$}
     \vcenter{\hbox{$#2#3$}}\kern-.5\wd0}}

\def\dashint{\Xint-}

\def\YYint#1#2#3{{\setbox0=\hbox{$#1{#2#3}{\iint}$}
    \vcenter{\hbox{$#2#3$}}\kern-.51\wd0}}

\renewcommand{\div}{\operatorname{div}}

\DeclareMathOperator*{\esssup}{ess\,sup\,}
\DeclareMathOperator*{\essinf}{ess\,inf\,}
\DeclareMathOperator*{\essosc}{ess\,osc\,}

\DeclareMathOperator{\diam}{diam}

\DeclareMathOperator\supp{supp}

\title[Doubling, Poincar\'e and parabolic Harnack inequalities]{Doubling measures, Poincar\'e inequalities and parabolic Harnack inequalities for a doubly nonlinear equation}

\author{Theo Elenius}
\address[Theo Elenius]{Department of Mathematics, Aalto University, P.O. BOX 11100, 00076 Aalto, Finland}
\email{theo.elenius@aalto.fi}
\author{Juha Kinnunen}
\address[Juha Kinnunen]{Department of Mathematics, Aalto University, P.O. BOX 11100, 00076 Aalto, Finland}
\email{juha.k.kinnunen@aalto.fi}

\begin{document}
\bibliographystyle{plainnat}

\begin{abstract}
We characterize metric measure spaces satisfying parabolic Harnack inequalities for 
a doubly nonlinear equation in terms of volume doubling and Poincar\'{e} inequalities. Our approach uses purely analytical methods, based on obtaining estimates for solutions to a related Cauchy problem. This extends previous linear results to the nonlinear setting without relying on heat kernel estimates and representation formulae.
\end{abstract}

\subjclass[2020]{35K55, 46E46}
\keywords{Parabolic Harnack inequality, doubling measure, Poincar\'{e} inequality, doubly nonlinear equation, metric measure space}

\maketitle

\section{Introduction}

A fundamental problem in analysis on metric spaces is to determine whether properties of the underlying space can be characterized via the behavior of solutions to partial differential equations. A well-known result of Grigor'yan \cite{Grigoryan1991} and Saloff-Coste \cite{Saloff-Coste1992, Saloff-Coste2002} states that the parabolic Harnack inequality for the heat equation characterizes volume doubling and a $(1,2)$-Poincar\'e inequality on Riemannian manifolds. The corresponding result has later been obtained in several settings, including locally finite graphs \cite{Delmotte}, Dirichlet forms on locally compact metric spaces \cite{Sturm} and non-local Dirichlet forms \cite{ChenKumagaiWang}, among others.

In this work, we show that the parabolic Harnack inequality for the doubly nonlinear equation 
\begin{equation}
    \label{eq: PDE}
    \partial_t( \lvert u \rvert^{p-2} u)-\div(\lvert\nabla u \rvert^{p-2}\nabla u)=0, \quad 1<p<\infty, 
\end{equation}
characterizes metric measure spaces with a doubling measure and a $(1,p)$-Poincar\'e{} inequality. In particular, this extends the previous results with $p=2$ to the entire parameter range $1<p<\infty$. To the best of our knowledge, this is the first characterization of parabolic Harnack inequalities for a nonlinear parabolic partial differential equation by geometric and functional analytic properties of the underlying space.

The equation \eqref{eq: PDE} was studied by Trudinger \cite{Trudinger}, who observed that nonnegative solutions enjoy a scale and location invariant parabolic Harnack inequality, analogous to the one for the heat equation. The parabolic Harnack inequality asserts that if $u=u(x,t)$ is a nonnegative solution of \eqref{eq: PDE} in $Q=B(x_0,4r)\times (t_0-(4r)^p,t_0+(4r)^p)$, then
\begin{equation}
    \label{eq: PHI}
    \esssup_{Q^-} u \leq c_H \essinf_{Q^+} u,
\end{equation}
where $Q^-=B(x_0,r)\times (t_0- r^p,t_0-2^{-p}r^p)$
and $Q^+=B(x_0,r)\times (t_0+2^{-p}r^p,t_0+r^p)$. Trudinger's parabolic Harnack inequality has been extended to the setting of metric measure spaces with a doubling measure and a $(1,p)$-Poincar\'e{} inequality \cite{KinnunenKuusi2007,MarolaMasson}.
Note, that for $p=2$, the doubly nonlinear equation \eqref{eq: PDE} reduces to the heat equation.

Doubling measures and Poincar\'e inequalities play a central role in analysis on metric measure spaces. Typically these conditions are imposed a priori. For example, they imply Sobolev embedding theorems which are essential in many applications. We refer to the monographs \cite{BjornBjorn2011,Heinonen,HeinonenKoskelaShanmugalingam} for comprehensive accounts of this theory. Semmes \cite{Semmes} gave a geometric characterization of Poincar\'e inequalities in terms of families of curves, but in general few characterizations of these conditions are known.
Our main result establishes a connection between analysis on metric spaces and the regularity theory of nonlinear parabolic partial differential equations. Although our primary motivation comes from the setting of Riemannian manifolds, we work in the more general framework of metric measure spaces.
\begin{theorem}
\label{thm: main thm}
    Let $1<p<\infty$. Let $(X,d,\mu)$ be a metric measure space such that $X$ is geodesic and complete, the measure $\mu$ is finite and nonzero on balls, the Sobolev space $N^{1,p}(X)$ is reflexive and for $p\neq 2$ assume that the Rellich--Kondrachov property in \Cref{def: rellich kondrachov property} holds. Then the following conditions are equivalent:
    \begin{itemize}
        \item[(1)] $\mu$ is doubling and a $(1,p)$-Poincar\'e inequality holds in $(X,d,\mu)$, see \eqref{eq: doubling condition} and \Cref{def: poincare}.
        \item[(2)] The parabolic Harnack inequality holds in $(X,d,\mu)$, see \Cref{{def: PHI}}.
    \end{itemize}
    Moreover, the constants appearing in $(1)$ and $(2)$ are quantitative in terms of those in $(2)$ and $(1)$, respectively.
\end{theorem}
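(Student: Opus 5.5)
The direction (1)$\Rightarrow$(2) is the known result: in a complete metric measure space with a doubling measure and a $(1,p)$-Poincar\'e inequality, nonnegative weak solutions of \eqref{eq: PDE} satisfy \eqref{eq: PHI}. This follows from the Moser/De Giorgi machinery of \cite{KinnunenKuusi2007,MarolaMasson}, with $c_H$ depending only on $p$, the doubling constant and the Poincar\'e constants. I will simply cite it. All the work is in (2)$\Rightarrow$(1), which I would prove analytically in the spirit of Saloff-Coste, replacing heat kernels by solutions of a Cauchy problem.

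\emph{Step 1: existence and comparison for the Cauchy problem.} Using reflexivity of $N^{1,p}(X)$, I construct solutions of \eqref{eq: PDE} in $B\times(0,T)$ (or in $X\times(0,T)$) with prescribed initial data such as $u(\cdot,0)=\chi_{B(x_0,r)}$. I would use Rothe time discretization or a Galerkin/minimization scheme, with monotone operator theory giving limits. For $p\neq2$ the nonlinearity $|u|^{p-2}u$ in the time derivative prevents passing to the limit from weak convergence alone. This is exactly where I invoke the Rellich--Kondrachov property to obtain strong $L^p_{\mathrm{loc}}$ convergence. I would also record the properties needed later: comparison (so $0\le u\le 1$), conservation or non-increase of $\int|u|^{p}\,d\mu$ obtained by testing with $u$, and the energy identity
\[
\tfrac{p-1}{p}\int |u(\cdot,t)|^p\,d\mu+\int_0^t\!\!\int g_u^p\,d\mu\,ds=\tfrac{p-1}{p}\int|u(\cdot,0)|^p\,d\mu.
\]

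\emph{Step 2: doubling.} Let $u$ solve the Cauchy problem with $u(\cdot,0)=\chi_{B(x_0,r)}$. Two facts drive the argument.
\begin{itemize}
\item Finite speed / mass localization: $\int_{B(x_0,Cr)}u^{p-1}(\cdot,r^p)\,d\mu\gtrsim \mu(B(x_0,r))$. I would get this from conservation of $\int u^{p-1}\,d\mu$, tested with cut-offs, together with a tail estimate showing the mass has not escaped far by time $r^p$.
\item The parabolic Harnack inequality applied across scales: the value at a point in $B(x_0,r)$ at time $\sim r^p$ controls $\sup u$ on $B(x_0,2r)$ at a later time.
\end{itemize}
Combining these with the upper bound $u\le1$ and the $L^p$ decay bound, $\sup u(\cdot,t)\lesssim \mu(B(x_0,r))/\mu(B(x_0,2r))$-type estimates, yields $\mu(B(x_0,2r))\le C\mu(B(x_0,r))$ with $C$ depending only on $c_H,p$. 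I expect this step to be the main obstacle. Without a heat kernel, the lower and upper mass bounds must be obtained from energy estimates and comparison alone, and the nonlinearity breaks both linear scaling in the data and superposition. My first attempt would exploit the homogeneity of \eqref{eq: PDE} ($\lambda u$ is again a solution), which keeps \eqref{eq: PHI} scale invariant. I would then carefully choose the test functions $u^{p-1}\varphi$ and $\varphi^p$ to control the flux of $\int u^{p-1}\,d\mu$.

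\emph{Step 3: Poincar\'e.} Given $f\in N^{1,p}$ on $B=B(x_0,r)$, I solve the Neumann or Cauchy problem on a larger ball starting from $f$ (or from $|f-c|$, split into positive and negative parts). The energy identity gives
\[
\int|u(\cdot,r^p)-f|^{p}\text{-type quantity}\lesssim r^p\int g_f^p\,d\mu,
\]
via the time-derivative estimate. Next, the Harnack inequality implies H\"older continuity, hence an oscillation decay for $u(\cdot,r^p)$ on $B$ that is quantitative in $c_H$. Combining the two controls $\int_B|f-f_B|^p\,d\mu$ by $r^p\int_{\lambda B}g_f^p\,d\mu$, which gives a weak $(1,p)$-Poincar\'e inequality. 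Since $X$ is geodesic, this improves to the strong form. All constants are traced through, which yields the quantitative statement.
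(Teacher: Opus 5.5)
\textbf{The doubling step (and the direction (1)$\Rightarrow$(2)).} The proposal has genuine gaps, and the central one is in the doubling step. You want the lower bound $\int_{B(x_0,Cr)}u(\cdot,r^p)^{p-1}\dmu\gtrsim\mu(B(x_0,r))$ from conservation plus a tail estimate, but this is circular.

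\begin{itemize}
\item Test with a spatial cutoff of $B(x_0,Cr)$ and use the energy bound $\int_0^T\!\int g_u^p\dmu\dt\le c\,\mu(B(x_0,r))$. The flux up to time $r^p$ is then at most $\frac{1}{Cr}\int_0^{r^p}\!\int g_u^{p-1}\dmu\dt\le c\,C^{-1}\mu(B(x_0,r))^{\frac{p-1}{p}}\mu(B(x_0,Cr))^{\frac1p}$.
\item This is small compared with $\mu(B(x_0,r))$ only if you already control $\mu(B(x_0,Cr))/\mu(B(x_0,r))$, which is exactly what you are trying to prove.
\item Moreover, conservation is not available. One only has the inequality $\int_X u(t)^{p-1}\dmu\le\int_X u_0^{p-1}\dmu$.
\end{itemize}

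The missing idea is to bound $\sup u$ from below directly, with no mass localization at all.

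\begin{itemize}
\item Take $\supp f\subset B(x_0,r/8)$ with $\int_X f^{p-1}\dmu=1$, and extend the Cauchy solution by zero to $t<0$. It is then a supersolution in $X\times(-\infty,T)$, whose inequality carries the source term $\int f^{p-1}\varphi(\cdot,0)\dmu$.
\item Test as in \Cref{lemma: var sol implies weak sol with inequality} with a space-time cutoff $\varphi=\eta\xi$ equal to $1$ on $Q_{r/4}$, where $Q_s=B(x_0,s)\times(-s^p,s^p)$. This gives $1\le\int g_u^{p-1}g_\varphi\dnu-\int u^{p-1}\partial_t\varphi\dnu$.
\item H\"older's inequality, together with \Cref{lemma: caccioppoli with cutoff} applied to $(M-u)_+$ with $M=\esssup u$ on the annular cylinder, bounds both terms by $c\,\mu(B(x_0,r))\esssup_{Q_r\setminus Q_{r/4}}u^{p-1}$.
\item A chain of Harnack inequalities carries this to $\essinf$ over $B(x_0,2r)$ at a later time. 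There the $L^{p-1}$-mass inequality gives the matching bound $c\,\mu(B(x_0,2r))^{-1/(p-1)}$.
\end{itemize}

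Separately, (1)$\Rightarrow$(2) cannot simply be cited. The metric-space results of Kinnunen--Kuusi and Marola--Masson require solutions locally bounded away from zero. \Cref{def: PHI} concerns merely nonnegative solutions, and (2)$\Rightarrow$(1) uses precisely that generality, for Cauchy solutions with compactly supported data. Since constants cannot be added to solutions of \eqref{eq: PDE}, this needs the expansion-of-positivity argument behind \Cref{thm: PHI}.

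\textbf{The Poincar\'e step.} Step 3 also fails as written, for three reasons.

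First, for $p\neq2$ the parabolic Harnack inequality does not give H\"older continuity of $u(\cdot,r^p)$. The oscillation argument applies Harnack to $u-\essinf u$ and $\esssup u-u$, and these are not solutions of \eqref{eq: PDE}. The paper uses the PHI only for time-independent solutions: it extracts the elliptic Harnack inequality for $p$-harmonic functions, where adding constants is allowed (\Cref{lemma: harnack implies Holder}).

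Second, you have no admissible data for a general $f\in N^{1,p}(B)$. A Cauchy problem on $X$ needs a cutoff, which costs an uncontrollable $r^{-p}\int|f|^p\dmu$. A Neumann problem on a ball is not available, and the PHI only acts in the interior anyway. The paper splits the task into two parts.
\begin{itemize}
\item For $f\in N^{1,p}_0(B(x_0,r))$ it solves the Cauchy problem and tests with $\zeta_hf$. This gives $\dashint f^p\dmu\le c\tau\dashint g_f^p\dmu+\dashint u(\tau)^p\dmu$. For $\tau=(4\sigma r)^p$ with $\sigma$ large, the last term is at most $\frac12\dashint f^p\dmu$, by PHI, the $L^{p-1}$-mass inequality and reverse doubling (\Cref{prop: PHI implies local sobolev}).
\item For general $f$ it subtracts the $p$-harmonic replacement $u$ with $u-f\in N^{1,p}_0(B(x_0,4r))$.
\end{itemize}

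Third, oscillation decay only gives $\essosc_{B(x_0,\delta r)}u\le c\delta^\kappa\dashint_{B(x_0,r)}f\dmu$. This bounds by the size of $f$, not by $f-f_B$. What comes out, for nonnegative $f$, is
\[
\bigl(\dashint f^q\dmu\bigr)^{1/q}\le c\delta^{-Q}r\bigl(\dashint g_f^p\dmu\bigr)^{1/p}+(1+c\delta^\kappa)\dashint f\dmu .
\]
To reach a Poincar\'e inequality you still need the median-truncation and absorption argument of \Cref{lemma: charact of poinc}, with $q>1$ (the paper takes $q=p$). Your ``combining the two'' skips exactly this step.
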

Our proof of \Cref{thm: main thm} begins in \Cref{section: VD + PI implies PHI}, by presenting a proof of the parabolic Harnack inequality \eqref{eq: PHI} starting from volume doubling and a $(1,p)$-Poincar\'e inequality. We give a self-contained proof using De Giorgi--Di Benedetto-type iteration techniques and the expansion of positivity. The final argument for showing the parabolic Harnack inequality uses a variant of the technique used for the parabolic $p$-Laplace equation in \cite{DiBenedettoGianazzaVespri2008}. Another contribution is that we only assume the solution in \eqref{eq: PHI} to be nonnegative, whereas previous sources in the metric setting \cite{KinnunenKuusi2007, MarolaMasson} require that the solution be locally bounded away from zero. This is non-trivial since constants cannot generally be added to solutions of \eqref{eq: PDE} to produce a new solution.

The assumptions on reflexivity and the Rellich--Kondrachov compactness property in 
\Cref{thm: main thm} are only applied to prove the existence of a solution to  a Cauchy problem for \eqref{eq: PDE} in \Cref{section: existence}. These assumptions hold in geodesically complete Riemannian manifolds, and in the metric setting they are implied by doubling and Poincar\'e, see \cite{Cheeger,HajlaszKoskela}. Our approach to solutions of \eqref{eq: PDE} is variational, and the existence proof is based on the well-known minimizing movements scheme, adapted to our setting. We refer to \cite{BogeleinDuzaarMarcelliniScheven} and the references therein for existence results in the Euclidean setting, and \cite{surig2026} for existence in Riemannian manifolds. 

The proof that the parabolic Harnack inequalities implies doubling and Poincar\'e will proceed in several steps, which we outline below. In \Cref{section: PHI implies VD} we prove that the parabolic Harnack inequality implies volume doubling. Since the equation \eqref{eq: PDE} is nonlinear, solutions do not admit a representation formula in terms of the fundamental solution. Consequently, the classical arguments used in the linear theory are not available. 
We show that solutions to a Cauchy problem with initial data approximating the Dirac delta satisfy estimates analogous to those of the heat kernel, and that these estimates imply the volume doubling. The key steps are to replace representation formulas with a careful application of Caccioppoli inequalities for truncated shifts of solutions to \eqref{eq: PDE}, and iterating the parabolic Harnack inequality to transfer estimates in a small scale to larger scales.

In \Cref{section: PHI implies Poincare}, we show that the parabolic Harnack inequality implies a Poincar\'e inequality. First, we show a Poincar\'e-type inequality for functions with zero boundary values on a given ball in \Cref{prop: PHI implies local sobolev}. To obtain this inequality, we use the parabolic Harnack inequality to conclude quantitative decay of the $L^p$ norm of solutions to a Cauchy problem and apply the energy decreasing properties of \eqref{eq: PDE}. Then we present a technique of proving a general Poincar\'e inequality from an inequality for functions with zero boundary values, by applying the Dirichlet problem for the $p$-Laplace equation. The $p$-Laplace is the stationary version of \eqref{eq: PDE}, and hence the elliptic Harnack inequality is available for nonnegative solutions. The elliptic Harnack inequality is applied to localize functions to smaller balls, and to obtain H{\" o}lder continuity estimates. 

As a by-product of our proof, we obtain that the conjunction of volume doubling, a Poincar\'e-type inequality for zero boundary values and the elliptic Harnack inequality is equivalent with volume doubling and a Poincar\'e inequality, and in turn with the parabolic Harnack inequality. This equivalence was shown for $p=2$ in \cite{HebischSaloff-Coste2001}, but their proof proceeds via the established equivalence of Gaussian heat kernel estimates with volume doubling and a Poincar\'e inequality, and hence is not applicable in our setting.

More general forms of parabolic Harnack inequalities, involving a time lag of order $\Psi(r)$, for the heat equation have also been considered. Such parabolic Harnack inequalities appear in the analysis of fractals, and have been shown to characterize volume doubling, a $\Psi$-Poincar\'e inequality and a $\Psi$-cutoff Sobolev inequality, see \cite{BarlowBass2004, BarlowBassKumagai}. Related techniques have played an important role in the characterization and stability theory of elliptic Harnack inequalities for the Laplace equation \cite{BarlowMurugan, KajinoMurugan, Bass}. We hope that the methods developed in this work will be useful to study the corresponding nonlinear problems as well.

As an immediate consequence of \Cref{thm: main thm}, we obtain that the parabolic Harnack inequality \eqref{eq: PHI} is stable under perturbations of the underlying space which preserve volume doubling and Poincar\'e inequality.  \Cref{thm: main thm} also implies stability of parabolic Harnack inequalities with respect to the parameter $p$. By a result of Keith and Zhong \cite{KeithZhong}, a $(1,p)$-Poincar\'e inequality implies a $(1,q)$-Poincar\'e inequality for some $q<p$. Consequently, if the parabolic Harnack inequality \eqref{eq: PHI} holds for some $p$, it holds for some $q<p$.
We also note that a $(1,p)$-Poincar\'e inequality implies a $(1,q)$-Poincar\'e inequality for every $q>p$.
\section{Preliminaries}
\label{section: prelim}
\subsection{Geodesic spaces and doubling measures}
We consider a metric measure space $(X,d,\mu)$, where $X$ is a complete geodesic metric space with metric $d$, and $\mu$ is a Borel measure on $X$. A metric space $X$ is geodesic, if every pair of points in $X$ can be connected by a path with length equal to the distance between the points. We assume that 
the measure $\mu$ is nontrivial in the sense that $0<\mu(B(x_0,r))<\infty$
for every open ball $B(x_0,r)=\{x\in X:d(x,x_0)<r\}$, where $x_0\in X$ and $0<r<\infty$. The above assumptions will be in force throughout the rest of the article. Any additional assumptions  will be explicitly stated when they are imposed.

The measure $\mu$ is said to be doubling if there exists a constant $c_\mu\geq 1$ such that 
\begin{equation}
    \label{eq: doubling condition}
    \mu(B(x_0,2r))\leq c_\mu \mu(B(x_0,r))
\end{equation}
for every $x_0\in X$ and $r>0$.
If $\mu$ is doubling, there exist constants $Q>0$ and $c>0$, depending only on $c_\mu$, such that
\begin{equation}
    \label{eq: doubling implies radii control from below}
    \frac{\mu(B(x_0,r))}{\mu(B(x_0,R))}\geq c \left(\frac{r}{R}\right)^Q
\end{equation}
for every $x_0\in X$ and $0<r<R<\infty$. In fact, we may choose $c=c_\mu$ and $Q=\log_2c_\mu$.
The doubling condition also gives an upper bound for the ratio of the measure of balls.
If $X$ is connected and $\mu$ is doubling, then there exist constants $c>0$ and $\alpha>0$, depending only on $c_\mu$, such that
\begin{equation}
    \label{eq: doubling implies radii control from above}
    \frac{\mu(B(x_0,r))}{\mu(B(x_0,R))}\leq  c \left(\frac{r}{R}\right)^\alpha
\end{equation}
for every $x_0\in X$ and $0<r<R<\frac{1}{3}\diam X$. In particular, this holds in a geodesic space.
In a geodesic space, the doubling condition improves to the following annular decay property.
If $X$ is geodesic and $\mu$ is doubling, then there exist constants $c>0$ and $\beta>0$, depending only on $c_\mu$, such that 
\begin{equation}\label{eq: geodesic doubling becomes annular decay}
    \mu(B(x_0,r)\backslash B(x_0,(1-\delta)r))\leq c \delta^\beta \mu(B(x_0,r))
\end{equation}
for every $x_0\in X$, $r>0$ and $0<\delta<1$.

\subsection{Upper gradients and Sobolev spaces}
A nonnegative Borel function $g$ on $X$ is called an upper gradient
of an extended real valued function $u$ on $X$ if for all paths $\gamma$ joining points $x$ and $y$ in $\Omega$ we have
\begin{equation} \label{ug-cond}
|u(x)-u(y)|\le \int_\gamma g\,ds,
\end{equation}
whenever both $u(x)$ and $u(y)$ are finite, and $\int_\gamma g\,ds=\infty$ otherwise. 
We say that $g$ is a $p$-weak upper gradient of $u$
if the collection $\Gamma$ of rectifiable curves for which \eqref{ug-cond} fails has zero $p$-modulus, that is, there exists a nonnegative Borel function $f\in L^p(X;\mu)$
such that the integral $\int_\gamma f\, ds$ is infinite for every $\gamma\in \Gamma$. 

Let $\Omega$ be an open subset of $X$ and $1\leq p<\infty$. Let 
\begin{equation}
    \label{eq: newtonian space norm def}
    \Vert u\Vert_{N^{1,p}(\Omega)}=\Vert u\Vert_{L^{p}(\Omega)}+\inf\Vert g\Vert_{L^{p}(\Omega)},
\end{equation}
where the infimum is taken over all upper gradients $g$ of $u$ and 
define $\widetilde{N}^{1,p}(\Omega)$ as the space of functions $u$ for which $\Vert u\Vert_{N^{1,p}(\Omega)}<\infty$.
The Newton--Sobolev space is defined as the quotient space
\[
N^{1,p}(\Omega)=\lbrace u:\Vert u\Vert_{N^{1,p}(\Omega)}<\infty\rbrace/\sim,
\]
where $u\sim v$ if and only if $\Vert u-v\Vert_{N^{1,p}(\Omega)}=0$. We define $N^{1,q}_0(\Omega)$ as the space of functions $u\in N^{1,q}(X)$ that vanish $\mu$-almost everywhere in $X\setminus\Omega$. 

If $u$ has an upper gradient $g\in L^p(\Omega)$, there exists a unique minimal $p$-weak upper gradient $g_u\in L^p(\Omega)$ satisfying
$g_u\le g$ $\mu$-almost everywhere for all $p$-weak upper gradients $g\in L^p(\Omega)$ of $u$, see \cite[Theorem 6.3.20]{HeinonenKoskelaShanmugalingam}. Moreover, the minimal $p$-weak upper gradient is unique up to sets of measure zero.
For $u\in N^{1,p}(\Omega)$ we have
\[
\Vert u\Vert_{N^{1,p}(\Omega)}=\Vert u\Vert_{L^{p}(\Omega)}+\Vert g_u\Vert_{L^{p}(\Omega)},
\]
where $g_u$ is the minimal $p$-weak upper gradient of $u$.
A key advantage of $p$-weak upper gradients is that they behave better under $L^p$-convergence than upper gradients, see \cite[Proposition 2.2]{BjornBjorn2011}.
The distinction between upper gradients and $p$-weak upper gradients is relatively minor, since every $p$-weak upper gradient can be approximated by a sequence of upper gradients in $L^p$, see \cite[Lemma 6.2.2]{HeinonenKoskelaShanmugalingam}.
Consequently, the norm remains unchanged if the infimum in \eqref{eq: newtonian space norm def} is taken over $p$-weak upper gradients instead of upper gradients.

We collect some calculus rules for upper gradients on metric measure spaces. 
Let $u,v\in N^{1,p}(\Omega)$ and let $g_{u},\,g_{v}\in L^{p}(\Omega)$ be their minimal $p$-weak upper gradients. 
Then $g_{u}+g_{v}$ and $\vert u\vert g_{v}+\vert v\vert g_{u}$ are $p$-weak upper gradients for $u+v$ and $uv$, respectively, see \cite[Theorem 2.15]{BjornBjorn2011}.
Let  $\eta$ be Lipschitz function on $\Omega$ with $0\leq\eta\leq 1$ and consider $w=u+\eta(u-v)=(1-\eta)u+\eta v$. Then
$(1-\eta)g_{u}+\eta g_{v}+\vert v-u\vert g_{\eta}$
is a $p$-weak upper gradient of $w$, see \cite[Theorem 2.18]{BjornBjorn2011}.
Moreover, $g_{u}=g_{v}$, $\mu$-almost everywhere on the set $\lbrace x\in X: u(x)=v(x)\rbrace$.
In particular, if $c\in\mathbb{R}$ is a constant, then $g_{u}=0$ $\mu-$almost everywhere on the set $\lbrace x\in X: u(x)=c\rbrace$, see \cite[Corollary 2.21]{BjornBjorn2011}.

\subsection{Poincar\'e inequalities}
The integral average over a $\mu$-measurable subset $A$ of $X$, with $0<\mu(A)<\infty$, is denoted by
\[
u_{A}=\dashint_{A} u\dmu
=\frac{1}{\mu(A)}\int_{A}u\dmu.
\]

\begin{definition}\label{def: poincare}
Let $1\le p,q<\infty$. We say that a  $(q,p)$-Poincar\'e inequality holds in $(X,d,\mu)$ if there exist constants $c_P>0$ and $\tau\geq 1$ such that 
\begin{equation*}
    \biggl(\dashint_{B(x_0,r)}\lvert u-u_{B(x_0,r)}\rvert^q\dmu\biggr)^{\frac{1}{q}}\leq c_P r \biggl(\dashint_{B(x_0,\tau r)}  g_u^p\dmu\biggr)^\frac{1}{p}
\end{equation*}
for every $u\in N^{1,p}(B(x_0,\tau r))$, where $x_0\in X$ and $0<r<\infty$.
\end{definition}
We note that, in a geodesic space, a  $(q,p)$-Poincar\'e inequality with $\tau>1$ implies a  $(q,p)$-Poincar\'e inequality with $\tau=1$, see \cite[Theorem 4.34]{BjornBjorn2011}. When applying the Poincar\'e inequality in  \Cref{section: VD + PI implies PHI} we will apply it with $\tau=1$. However, when proving a $(1,p)$-Poincar\'e inequality in \Cref{section: PHI implies Poincare} we will prove it for a parameter $\tau>1$.

It is well known that the  Poincar\'e inequality is self-improving. The following lemma concerns the self-improvement of the left-hand side, see \cite[Theorem 9.1.15]{HeinonenKoskelaShanmugalingam} or \cite[Theorem 4.16]{BjornBjorn2011}.
\begin{lemma}
\label{lemma: self improvement of LHS in Poincaŕe}
Assume $\mu$ is doubling and that a $(1,p)$-Poincar\'e inequality holds in $(X,d,\mu)$ and let $Q$ be as in \eqref{eq: doubling implies radii control from below}. If $1\le p<Q$, then a  $(p^*,p)$-Poincar\'e inequality holds in $(X,d,\mu)$, where $p^*=\frac{pQ}{Q-p}$. If $p\geq Q$, then a  $(p^*,p)$-Poincar\'e inequality holds in $(X,d,\mu)$ for every $1\le p^*<\infty$.
\end{lemma}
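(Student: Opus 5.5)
This is the standard self-improvement of the left-hand side of the Poincaré inequality (Hajłasz--Koskela). I would prove it in four steps: a chaining argument, a pointwise bound by a fractional maximal function, a weak-type estimate, and truncation to recover the strong estimate.

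\emph{Step 1: chaining.} Since $X$ is geodesic, the $(1,p)$-Poincar\'e inequality may be assumed to hold with $\tau=1$. Fix a ball $B=B(x_0,r)$ and $u\in N^{1,p}(B)$. For $\mu$-almost every Lebesgue point $x\in B$, I join $x$ to the center along a chain of balls $B_i$ with $B_0=B$. The radii are comparable to $2^{-i}r$, the balls shrink to $x$, consecutive balls have comparable measure, and all of them lie in $B$; the geodesic property lets these balls be chosen inside $B$ and ensures the doubling comparisons hold. Telescoping, the Lebesgue differentiation theorem and the $(1,p)$-Poincar\'e inequality on each $B_i$ give
\[
|u(x)-u_B|\le c\sum_{i\ge 0} 2^{-i}r\Bigl(\dashint_{B_i} g_u^p\dmu\Bigr)^{1/p}.
\]

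\emph{Step 2: fractional maximal function.} By \eqref{eq: doubling implies radii control from below}, $\mu(B_i)\ge c(2^{-i})^Q\mu(B)$. Splitting the sum at a threshold scale, I bound $|u(x)-u_B|$ by a truncated fractional maximal function of $g_u^p$ relative to $\mu(B)$. Writing $d\nu=g_u^p\,d\mu/\mu(B)$, the chaining gives a weak-type estimate of the form
\[
\frac{\mu(\{x\in B:|u-u_B|>t\})}{\mu(B)}\le c\Bigl(\frac{r\,\|g_u\|_{L^p(B,\nu\text{-normalized})}}{t}\Bigr)^{p^*},
\]
i.e.\ weak $(p^*,p)$ bound. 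This uses a Vitali/5r-covering argument for the sets where some $2^{-i}r(\dashint_{B_i}g_u^p)^{1/p}$ is large, balanced against the geometric weight $2^{-i(1-Q/p)\cdot}$; the exponent $p^*=pQ/(Q-p)$ is exactly the one that balances the two.

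\emph{Step 3: strong type via truncation.} I apply the weak estimate to the truncations $v_k=\min\{\max\{|u-b|-2^k,0\},2^k\}$, where $b$ is a median of $u$ on $B$. Their gradients $g_{v_k}$ are supported on disjoint level sets $\{2^k<|u-b|\le 2^{k+1}\}$, by the locality of minimal weak upper gradients stated in the preliminaries. Summing over $k$ and using $p^*\ge p$ with $\sum a_k^{p^*/p}\le(\sum a_k)^{p^*/p}$ gives the strong inequality for $u-b$. Replacing $b$ by $u_B$ costs only a constant.

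\emph{The case $p\ge Q$.} Here I apply the same argument with $Q$ replaced by any $Q'>p$; \eqref{eq: doubling implies radii control from below} still holds for $Q'$. This yields every finite exponent $p^*$.

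The main obstacle is the chaining in Step 1 when the ball cannot be enlarged. Keeping all chain balls inside $B$ so that $\tau=1$ is preserved requires the geodesic structure, for instance a Boman-chain / John-domain property of balls in geodesic spaces. If this proves delicate, I would first prove the result with $\tau$ replaced by $2\tau$ and then invoke \cite[Theorem 4.34]{BjornBjorn2011} to return to the original dilation.
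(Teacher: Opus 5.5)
The paper gives no argument of its own here and simply cites \cite[Theorem 9.1.15]{HeinonenKoskelaShanmugalingam} and \cite[Theorem 4.16]{BjornBjorn2011}. Your outline is the standard Haj{\l}asz--Koskela argument for this self-improvement, so it is correct in substance: chaining, then a Hedberg-type splitting at a threshold scale chosen in terms of the level $t$ (which is what gives the weak bound at the sharp exponent $p^*$), then Maz'ya truncation, and $Q'>p$ when $p\ge Q$. Two small fixes: truncate $(u-b)_+$ and $(u-b)_-$ separately so that each truncation vanishes on half of the ball (this is what turns the weak bound for $v_k-(v_k)_B$ into one for $v_k$), and preserving $\tau=1$ is unnecessary, since \Cref{def: poincare} allows any dilation $\tau\ge 1$.
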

Moreover, by a result of Keith and Zhong \cite{KeithZhong}, a Poincar\'e inequality also self-improves with respect to the right-hand side. 
\begin{lemma}
Assume that $\mu$ is doubling and let $1<p<\infty$. If a  $(1,p)$-Poincar\'e inequality holds in $(X,d,\mu)$, then there exists $q\ge1$, with $q<p$, depending on $c_\mu$, $c_P$ and $p$, such that a  $(1,q)$-Poincar\'e inequality holds in $(X,d,\mu)$.
\end{lemma}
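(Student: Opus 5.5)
This is the Keith--Zhong theorem, quoted from \cite{KeithZhong}; in the paper it is cited rather than proved, so what follows is only a sketch of how the argument goes.

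\emph{Reduction.} The plan is to reduce the statement to a pointwise estimate that can be improved from $p$ to some smaller exponent. Since $X$ is geodesic, we may take $\tau=1$. By the characterization of Poincar\'e inequalities through maximal functions, the $(1,p)$-Poincar\'e inequality implies the following: for a Lipschitz $u$ on a ball $B$ and $\mu$-a.e.\ $x,y\in B$,
\[
\lvert u(x)-u(y)\rvert\le c\, d(x,y)\bigl(M_{2\tau d(x,y)}g_u^p(x)+M_{2\tau d(x,y)}g_u^p(y)\bigr)^{1/p}.
\]
Conversely, the same pointwise estimate with $p$ replaced by $q$ gives the $(1,q)$-Poincar\'e inequality. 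The proof of the converse uses doubling and a Hedberg-type truncation argument. It therefore suffices to show that, with $q<p$ close to $p$, the maximal function $M g^q$ controls the oscillation at every scale, uniformly.

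\emph{Level sets and the good-$\lambda$ estimate.} Fix $u$ Lipschitz with $\lvert u(x)-u(y)\rvert$ large relative to $d(x,y)\lambda$. For a level $\lambda$, let $E_\lambda=\{M g^p\le\lambda^p\}$. On $E_\lambda$ the function $u$ is $c\lambda$-Lipschitz by the pointwise estimate, and we extend $u|_{E_\lambda}$ by McShane. The main step is a good-$\lambda$/capacity estimate, obtained by testing the Poincar\'e inequality on the extension. It bounds the measure of the ball portions where $M g^p>\lambda^p$ from below in terms of $\int_{\{g>\epsilon\lambda\}}g^p$. Integrating over $\lambda$ then gives a reverse H\"older-type inequality of the form
\[
\int_{\{Mg^p>\lambda^p\}} g^p\dmu\le C\lambda^{p-q}\int_{\{g>\epsilon\lambda\}} g^q\dmu+\ldots,
\]
and a Gehring-type absorption argument converts this into the desired improvement of the exponent.

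\emph{Main obstacle.} The hardest step is this good-$\lambda$ inequality. There is no underlying differential structure, so the geometry has to come from the Poincar\'e inequality itself. Concretely, one uses the lower bound on $p$-capacity between the set where $u$ is small and the set where $u$ is large: a pair of points $x,y$ whose difference quotient is large relative to $\lambda$ forces, at some scale, a large portion of a ball to lie outside $E_\lambda$. Getting this with constants uniform in $\lambda$ and in the scale is the delicate part, and it requires completeness (properness) of $X$. The constants obtained depend only on $c_\mu$, $c_P$ and $p$.
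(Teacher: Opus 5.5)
Your proposal matches the paper. The paper gives no argument of its own and simply invokes Keith--Zhong \cite{KeithZhong}. All the hypotheses that result needs are in force: doubling, the $(1,p)$-Poincar\'e inequality, and completeness, which is part of the standing assumptions and which you rightly flag as essential. So the citation is the entire proof.

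Do not present your accompanying sketch as an argument, though. In particular, the displayed good-$\lambda$ inequality has no content until the omitted terms are specified, and without them it is false. For constant $u$, every nonnegative Borel function is an upper gradient. Take $g=A\chi_S$ with $S$ a small ball and $\lambda\ll A$. Then the left-hand side is $A^p\mu(S)$, while the first right-hand term is only $C\lambda^{p-q}A^q\mu(S)$.
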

The following lemma shows that a Poincar\'e inequality in balls with small radii compared to the diameter of the space implies the Poincar\'e inequality in all balls.
\begin{lemma}
\label{lemma: Poincare for smaller balls implies poincare}
Assume that $\mu$ is doubling and let $1\le p,q<\infty$. Assume that there exist constants $c$, $\tau'$ and $\delta>0$ such that 
\begin{equation}
    \label{eq: poincare for small balls}
    \biggl(\dashint_{B(x_0,r)}\lvert u-u_{B(x_0,r)}\rvert^q\dmu\biggr)^{\frac{1}{q}}\leq c r \biggl(\dashint_{B(x_0,\tau' r)}  g_u^p\dmu\biggr)^\frac{1}{p}
\end{equation}
for every $u\in N^{1,p}(B(x_0,\tau' r))$, where $x_0\in X$ and $0<r\leq\delta \diam(X)$. Then a  $(q,p)$-Poincar\'e inequality holds in $(X,d,\mu)$, with constants $c_P$ and $\tau$ depending only on $c$, $\tau'$, $\delta$ and $c_\mu$.
\end{lemma}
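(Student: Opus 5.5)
The plan is to take $\tau = \max\{\tau', 1/\delta\}$ (enlarged a bit if needed) and split into two cases according to the size of $r$.

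\textbf{Small radii.} If $0<r\le \delta\diam(X)$, the hypothesis \eqref{eq: poincare for small balls} applies directly. Since $\tau\ge\tau'$, doubling gives $\mu(B(x_0,\tau r))\le C(c_\mu,\tau,\tau')\,\mu(B(x_0,\tau' r))$. Enlarging the domain of integration then yields
\[
\biggl(\dashint_{B(x_0,\tau' r)} g_u^p\dmu\biggr)^{1/p}\le C\biggl(\dashint_{B(x_0,\tau r)} g_u^p\dmu\biggr)^{1/p}.
\]
Also, any $u\in N^{1,p}(B(x_0,\tau r))$ restricts to an element of $N^{1,p}(B(x_0,\tau' r))$. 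This case is therefore immediate.

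\textbf{Large radii.} If $r>\delta\diam(X)$, then $X$ is bounded with $\diam X< r/\delta$. Put $R=\delta\diam(X)<r$. Since $\tau r\ge r/\delta>\diam X$, we have $B(x_0,\tau r)=X=B(x_0,r)$. The task is to prove
\[
\Bigl(\dashint_X |u-u_X|^q\dmu\Bigr)^{1/q}\le C\,r\Bigl(\dashint_X g_u^p\dmu\Bigr)^{1/p}.
\]
Because $r> R$, it suffices to prove this with $R$ in place of $r$.

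Choose a maximal $R/2$-separated net $\{x_i\}$ in $X$. Doubling bounds its cardinality by $N=N(c_\mu,\delta)$, since every ball $B(x_i,R/4)$ has measure comparable to $\mu(X)$: indeed $X=B(x_i,\diam X)$ and $\diam X= R/\delta$. Let $B_i=B(x_i,R)$. These balls cover $X$, the small-ball inequality holds on each of them, and each has measure comparable to $\mu(X)$.

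The main step is a chaining argument. Since $X$ is geodesic, hence connected, the overlap graph is connected: its vertices are the $B_i$, with an edge whenever $B_i\cap B_j$ contains a ball $B(y,R/2)$ of some net point. Indeed, any two centers are joined by a geodesic, and the net points near that geodesic give consecutive balls whose centers are at distance $<R$. This is easier if we use the balls $B(x_i,R)$ together with the $R/2$-net, so that consecutive centers satisfy $d<R$ and the intersection contains a ball of radius comparable to $R$.

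Along an edge, doubling gives $\mu(B_i\cap B_j)\gtrsim\mu(X)$, so
\[
|u_{B_i}-u_{B_j}|\le \dashint_{B_i\cap B_j}|u-u_{B_i}|+|u-u_{B_j}|\le C\Bigl(\dashint_{B_i}|u-u_{B_i}|^q\Bigr)^{1/q}+C\Bigl(\dashint_{B_j}|u-u_{B_j}|^q\Bigr)^{1/q}.
\]
The small-ball inequality and comparability of measures bound the right-hand side by
\[
C R\Bigl(\dashint_X g_u^p\Bigr)^{1/p}.
\]
Chains have length at most $N$, so $|u_{B_i}-u_{B_1}|\le CNR\bigl(\dashint_X g_u^p\bigr)^{1/p}$ for every $i$. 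Finally, we use
\[
\int_X|u-u_{B_1}|^q\le\sum_i\int_{B_i}|u-u_{B_i}|^q+|u_{B_i}-u_{B_1}|^q,
\]
together with the standard estimate $\|u-u_X\|_q\le 2\|u-c\|_q$ for any constant $c$. This gives the claim with constants depending only on $c$, $\tau'$, $\delta$ and $c_\mu$.

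The main obstacle is getting the chaining geometry right: the balls must overlap in sets of measure comparable to $\mu(X)$, and the number of balls must be bounded independently of $u$ and $x_0$. Both follow from doubling together with the bound $\diam X\le R/\delta$.
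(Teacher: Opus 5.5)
Your proposal is correct in substance, but it is organized differently from the paper's proof.

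The paper first reduces to $0<r\le\diam X$. It then handles every such radius by one local chaining argument. It covers $B(x_0,r)$ by boundedly many balls $B(x_i,\frac{\delta}{2}r)$. It joins each $x_i$ to $x_0$ by a geodesic carrying about $2/\delta$ points spaced $\frac{\delta}{2}r$ apart. It applies the hypothesis on the balls $B(x_{i,j},\delta r)$, which is admissible because $\delta r\le\delta\diam X$. Summing yields $\tau=1+\tau'\delta$.

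You instead split by radius. For $r\le\delta\diam X$ there is nothing to do beyond passing from $\tau'$ to $\tau$ by doubling. For $r>\delta\diam X$, the space is bounded and every ball of radius comparable to $R=\delta\diam X$ has measure comparable to $\mu(X)$. So one global chaining at the single scale $R$, over a net of bounded cardinality, suffices, with no overlap counting. The cost is that you need $\tau\ge 1/\delta$ so that $B(x_0,\tau r)=X$, plus a connectivity argument for the net. The paper's chains are explicit geodesic chains, and its estimate stays inside $B(x_0,(1+\tau'\delta)r)$. For the lemma as stated, either route works.

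A few details need fixing.

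\begin{enumerate}
\item For $\delta\diam X<r\le\diam X$, it is $B(x_0,\tau r)$, not $B(x_0,r)$, that equals $X$. So the reduction to the inequality on $X$ is not an identity. It still works: $X\subset B(x_0,2r/\delta)$ gives $\mu(X)\le C(c_\mu,\delta)\,\mu(B(x_0,r))$, and hence $\bigl(\dashint_{B(x_0,r)}|u-u_{B(x_0,r)}|^q\dmu\bigr)^{1/q}\le 2\bigl(\dashint_{B(x_0,r)}|u-u_X|^q\dmu\bigr)^{1/q}\le 2C^{1/q}\bigl(\dashint_X|u-u_X|^q\dmu\bigr)^{1/q}$.
\item For the chaining, use the edge rule you settle on, $d(x_i,x_j)<R$. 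Then the geodesic midpoint $m$ of $x_i$ and $x_j$ gives $B(m,R/2)\subset B_i\cap B_j$. This is what makes $\mu(B_i\cap B_j)$ comparable to $\mu(X)$.
\item Connectivity of this graph is cleanest from connectedness of $X$. Suppose the net split into two nonempty classes with no edges between them. Then the unions of the balls $B(x_i,R/2)$ over the two classes would be disjoint open sets covering $X$, a contradiction. Without an argument of this kind, picking nearest net points along a geodesic only gives consecutive distances $<R+\epsilon$.
\item The final summation needs the harmless factor $2^{q-1}$.
\end{enumerate}
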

\begin{proof}
 We may assume that $0<r\leq \diam (X)$. Applying the standard covering theorem, see \cite[Lemma 1.7]{BjornBjorn2011}, and the doubling condition, there exists a finite collection of disjoint balls $B(x_i,\frac{\delta}{10}r)$, $i\in I=\{0,\dots, k\}$ with $B(x_0,r)\subset\cup_{i\in I} B(x_i,\frac{\delta}{2}r)$, $x_i\in B(x_0,r)$. Moreover, the number $k$ depends only on $\delta$ and $c_\mu$. Then 
\begin{equation*}
\begin{split}
    \biggl(\dashint_{B(x_0,r)} \lvert u - u_{B(x_0,r)}\rvert^q\dmu\biggr)^\frac{1}{q} & \leq 2 \biggl(\dashint_{B(x_0,r)} \lvert u - u_{B_0}\rvert^q\dmu\biggr)^\frac{1}{q} \\
    & \leq 2\sum_{i=0}^k \biggl(\dashint_{B(x_i,\frac{\delta}{2}r)} \lvert u-u_{B_0}\rvert^q \dmu\biggr)^\frac{1}{q}.
\end{split}
\end{equation*}
Let $i\in \{0,\dots,k\}$. Let $\gamma$ be a geodesic path connecting $x_i$ and $x_0$, and let $m=\lceil \frac{2}{\delta} \rceil$. Let $x_{i,j}$, $j\in \{1,\dots, m \}$, be such that $x_{i,1}=x_i$, $x_{i,m}=x_0$ and $d(x_{i,j},x_{i,j+1})\leq \frac{\delta}{2}r$. Then we have 
\begin{equation*}
\begin{split}
    \biggl(\dashint_{B(x_i,\frac{\delta}{2}r)} \lvert u-u_{B_0}\rvert^q \dmu\biggr)^\frac{1}{q} 
    & \leq \sum_{j=1}^{m-1} \lvert u_{B(x_{i,j},\frac{\delta}{2}r)}-u_{B(x_{i,j+1},\frac{\delta}{2}r)}\rvert\\
    &\qquad+ \biggl(\dashint_{B(x_i,\frac{\delta}{2}r)} \lvert u-u_{B(x_i,\frac{\delta}{2}r)}\rvert^q\dmu\biggr)^\frac{1}{q}
\end{split}
\end{equation*}
To estimate the first term on the right-hand side, we note that
\begin{equation*}
\begin{split}
    &\lvert u_{B(x_{i,j},\frac{\delta}{2}r)}-u_{B(x_{i,j+1},\frac{\delta}{2}r)}\rvert\\
    &\qquad\leq \lvert u_{B(x_{i,j},\frac{\delta}{2}r)}-u_{B(x_{i,j},\delta r)}\rvert +  \lvert u_{B(x_{i,j},\delta r)}-u_{B(x_{i,j+1},\frac{\delta}{2} r)}\rvert \\
    &\qquad\leq c \dashint_{B(x_{i,j},\delta r)} \lvert u-u_{B(x_{i,j},\delta r)}\rvert\dmu.
\end{split}
\end{equation*}
It follows that
\begin{equation*}
    \biggl(\dashint_{B(x_i,\frac{\delta}{2} r)} \lvert u-u_{B_0}\rvert^q \dmu\biggr)^\frac{1}{q} \leq c\sum_{j=1}^{m}\biggl(\dashint_{B(x_{i,j},\delta r)} \lvert u-u_{B(x_{i,j},\delta r)}\rvert^q\dmu\biggr)^\frac{1}{q}.
\end{equation*}
By \eqref{eq: poincare for small balls} we arrive at
\begin{equation*}
\begin{split}
    \biggl(\dashint_{B(x_0,r)} \lvert u - u_{B(x_0,r)}\rvert^q\dmu\biggr)^\frac{1}{q} & \leq c\sum_{i=0}^k\sum_{j=1}^{m}\biggl(\dashint_{B(x_{i,j},\delta r)} \lvert u-u_{B(x_{i,j},\delta r)}\rvert^q\dmu\biggr)^\frac{1}{q} \\
    & \leq c\sum_{i=0}^k\sum_{j=1}^{m} r\biggl(\dashint_{B(x_{i,j},\tau'\delta r)} g_u^p\dmu\biggr)^\frac{1}{p} \\
    & \leq c r\biggl(\dashint_{B(x_0,(1+\tau' \delta)  r)} g_u^p\dmu\biggr)^\frac{1}{p}.
\end{split}
\end{equation*}
In the final step we used the fact that the balls $B(x_{i,j}, \tau' \delta r)$ have bounded overlap, depending only on $c_\mu,\tau$ and $\delta$.
\end{proof}

\begin{lemma}
\label{lemma: charact of poinc}
Assume that $\mu$ is doubling. Let $1\le p<\infty$ and $1<q<\infty$. Assume that there exist strictly positive constants $c$, $\tau'$, $\epsilon$ and $\delta$, with $(1+\epsilon)(\frac{1}{2})^{1-\frac{1}{q}}<1$, such that
\begin{equation}
    \label{eq: charact of Poincare as reverse Holder}
    \biggl(\dashint_{B(x_0,r)}u^q \dmu\biggr)^\frac{1}{q} \leq c r \biggl(\dashint_{B(x_0,\tau' r)} g_u^p\dmu\biggr)^\frac{1}{p} + (1+\epsilon)\dashint_{B(x_0,r)}u\dmu
\end{equation}
for every nonnegative $u\in N^{1,p}(B(x_0,\tau' r))$, where $x_0\in X$ and $0<r\leq \delta \diam (X)$. Then a  $(q,p)$-Poincar\'e inequality holds in $(X,d,\mu)$, with constants $c_P$ and $\tau$, depending on $q$, $p$, $c$, $\tau'$, $\epsilon$ and $\delta$.
\end{lemma}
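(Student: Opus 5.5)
We plan to reduce to \Cref{lemma: Poincare for smaller balls implies poincare}, so it suffices to prove the $(q,p)$-Poincar\'e inequality on balls $B=B(x_0,r)$ with $0<r\le\delta\diam(X)$, with $\tau'$ in place of $\tau$. Fix such a ball and $u\in N^{1,p}(B(x_0,\tau' r))$. Since replacing $u$ by $u-m$ for a constant $m$ does not change $g_u$ or the left-hand side of the Poincar\'e inequality, we may normalize $u$. The natural choice is a median: choose $m$ with $\mu(\{u>m\}\cap B)\le\frac12\mu(B)$ and $\mu(\{u<m\}\cap B)\le\frac12\mu(B)$. Set $v_+=(u-m)_+$ and $v_-=(u-m)_-$. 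These are nonnegative functions in $N^{1,p}(B(x_0,\tau'r))$ with $g_{v_\pm}\le g_u$, and each vanishes on a subset of $B$ of measure at least $\frac12\mu(B)$.

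Next we apply \eqref{eq: charact of Poincare as reverse Holder} to $v=v_\pm$. Since $v$ is supported in a set $E\subset B$ with $\mu(E)\le\frac12\mu(B)$, H\"older's inequality gives
\[
\dashint_B v\dmu\le\Bigl(\frac{\mu(E)}{\mu(B)}\Bigr)^{1-\frac1q}\Bigl(\dashint_B v^q\dmu\Bigr)^{\frac1q}\le\Bigl(\frac12\Bigr)^{1-\frac1q}\Bigl(\dashint_B v^q\dmu\Bigr)^{\frac1q}.
\]
Inserting this into \eqref{eq: charact of Poincare as reverse Holder}, the last term is at most $\theta\bigl(\dashint_B v^q\dmu\bigr)^{1/q}$ with $\theta=(1+\epsilon)2^{-(1-1/q)}<1$. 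Because $v\in L^q(B)$, we can absorb this term into the left-hand side (see the next paragraph for the finiteness issue), which gives
\[
\Bigl(\dashint_B v^q\dmu\Bigr)^{\frac1q}\le\frac{c}{1-\theta}\,r\Bigl(\dashint_{B(x_0,\tau'r)}g_u^p\dmu\Bigr)^{\frac1p}.
\]

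The absorption is the main technical point, since it needs $\dashint_B v^q\dmu<\infty$. For $q\le p$ this follows from $u\in L^p$. In general we handle it by truncation. Applying the hypothesis to $\min(v,k)$, which is nonnegative, still vanishes on half of $B$, has finite $L^q$ norm and satisfies $g_{\min(v,k)}\le g_u$, gives the bound uniformly in $k$. Letting $k\to\infty$ by monotone convergence then gives the estimate for $v$.

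Finally, since $|u-m|=v_++v_-$,
\[
\Bigl(\dashint_B|u-u_B|^q\dmu\Bigr)^{\frac1q}\le2\Bigl(\dashint_B|u-m|^q\dmu\Bigr)^{\frac1q}\le\frac{4c}{1-\theta}\,r\Bigl(\dashint_{B(x_0,\tau'r)}g_u^p\dmu\Bigr)^{\frac1p},
\]
where the first inequality uses $|m-u_B|\le(\dashint_B|u-m|^q\dmu)^{1/q}$. This is \eqref{eq: poincare for small balls} with constant $4c/(1-\theta)$, which depends on $q$, $c$ and $\epsilon$. \Cref{lemma: Poincare for smaller balls implies poincare} then upgrades it to all balls, with constants depending additionally on $\tau'$, $\delta$ and $c_\mu$.
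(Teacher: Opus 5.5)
Your proposal is correct and follows the paper's proof essentially step by step. The paper also normalizes by a median $m$, applies the hypothesis to $(u-m)_\pm$ using $g_{(u-m)_\pm}\le g_u$, and uses H\"older's inequality on a support of at most half measure to absorb the $(1+\epsilon)$ term. It then concludes via $\bigl(\dashint_B|u-u_B|^q\dmu\bigr)^{1/q}\le 2\bigl(\dashint_B|u-m|^q\dmu\bigr)^{1/q}$ and \Cref{lemma: Poincare for smaller balls implies poincare}. Your truncation by $\min(v,k)$ is a small extra that justifies the absorption when $(u-m)_\pm$ is not known a priori to lie in $L^q(B)$, a point the paper passes over silently.
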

\begin{proof}
In order to be able to apply \Cref{lemma: Poincare for smaller balls implies poincare}, let $u\in N^{1,p}(B(x_0,\tau r))$ where $0<r\leq \delta \diam(X)$. Let $m$ be such that 
\[
\mu(\{ u\geq m\}\cap B(x_0,r)) \geq \frac{1}{2}\mu(B(x_0,r))
\]
and  
\[
\mu(\{ u \leq  m\}\cap B(x_0,r)) \geq \frac{1}{2}\mu(B(x_0,r)).
\]
Applying \eqref{eq: charact of Poincare as reverse Holder} to $(u-m)_+$ gives
\begin{equation}
\label{eq: poincare for u-m pos}
\begin{split}
    \biggl(\dashint_{B(x_0,r)}(u-m)_+^q \dmu\biggr)^\frac{1}{q} 
    &\leq c_P r \biggl(\dashint_{B(x_0,\tau r)} g_u^p\dmu\biggr)^\frac{1}{p}\\
    &\qquad+ (1+\epsilon)\dashint_{B(x_0,r)}(u-m)_+\dmu.
\end{split}
\end{equation}
Here we used the fact that $g_{(u-k)_+}\leq g_u$. By H\"older's inequality we have 
\begin{equation*}
\begin{split}
    &(1+\epsilon)\dashint_{B(x_0,r)}(u-m)_+\dmu\\
    &\qquad \leq (1+\epsilon) \biggl( \frac{\mu(\{u>m\}\cap B(x_0,r))}{\mu(B(x_0,r))}\biggr)^{1-\frac{1}{q}}\biggl(\dashint_{B(x_0,r)}(u-m)_+^q\dmu\biggr)^\frac{1}{q} \\
    &\qquad \leq (1+\epsilon) \Big( \frac{1}{2}\Big)^{1-\frac{1}{q}}\biggl(\dashint_{B(x_0,r)}(u-m)_+^q\dmu\biggr)^\frac{1}{q}.
\end{split}
\end{equation*}
Let $\gamma=1-(1+\epsilon)(\frac{1}{2})^{1-\frac{1}{q}}$. By assumption $\gamma>0$. Applying the previous estimate in \eqref{eq: poincare for u-m pos} and absorbing terms we find that 
\begin{equation*}
    \biggl(\dashint_{B(x_0,r)}(u-m)_+^q \dmu\biggr)^\frac{1}{q} \leq \frac{c_P}{\gamma} r \biggl(\dashint_{B(x_0,\tau r)} g_u^p\dmu\biggr)^\frac{1}{p}.
\end{equation*}
A similar argument shows that 
\begin{equation*}
    \biggl(\dashint_{B(x_0,r)}(m-u)_+^q \dmu\biggr)^\frac{1}{q} \leq \frac{c_P}{\gamma} r \biggl(\dashint_{B(x_0,\tau r)} g_u^p\dmu\biggr)^\frac{1}{p}.
\end{equation*}
Combining the estimates above gives
\begin{equation*}
    \biggl(\dashint_{B(x_0,r)}\lvert u-m\rvert^q \dmu\biggr)^\frac{1}{q} \leq \frac{2 c_P}{\gamma} r \biggl(\dashint_{B(x_0,\tau r)} g_u^p\dmu\biggr)^\frac{1}{p}.
\end{equation*}
Finally, we conclude that
\begin{equation*}
\begin{split}
    \biggl(\dashint_{B(x_0,r)}\lvert u-u_{B(x_0,r)}\rvert^q\dmu\biggr)^{\frac{1}{q}} & \leq 2 \biggl(\dashint_{B(x_0,r)}\lvert u-m\rvert^q\dmu\biggr)^{\frac{1}{q}} \\
    & \leq \frac{4 c_P}{\gamma} r \biggl(\dashint_{B(x_0,\tau r)} g_u^p\dmu\biggr)^\frac{1}{p}.
\end{split}
\end{equation*}
The claim follows from \Cref{lemma: Poincare for smaller balls implies poincare}.
\end{proof}
The previous lemma gives a characterization of the Poincar\'e inequality, in the sense that if a  $(q,p)$-Poincar\'e holds in $(X,d,\mu)$, then
\begin{equation*}
\begin{split}
    \biggl(\dashint_{B(x_0,r)} u^q \dmu  \biggr)^\frac{1}{q} & \leq \biggl(\dashint_{B(x_0,r)}\lvert u-u_{B(x_0,r)}\rvert^q\dmu\biggr)^{\frac{1}{q}} + \int_{B(x_0,r)} u \dmu \\
    & \leq c_P r \biggl(\dashint_{B(x_0,\tau r)} g_u^p\dmu\biggr)^\frac{1}{p} + \dashint_{B(x_0,r)}u\dmu,
\end{split}
\end{equation*}
so that \eqref{eq: charact of Poincare as reverse Holder} holds with $\epsilon =0$. This observation implies the following result for functions that vanish on a large portion of a set, see \cite[Lemma 2.1]{KinnunenShanmugalingam}.
\begin{lemma}
\label{eq: Poincare for function zero on large set}
Assume that $\mu$ is doubling. Let $1\le p<\infty$ and $1<q<\infty$. Assume that a  $(q,p)$-Poincar\'e inequality holds in $(X,d,\mu)$. Let $u\in N^{1,p}(X)$ and assume that there exists $0<\gamma<1$ such that $\mu(\{ \lvert u\rvert >0\}\cap B(x_0,r)) \leq \gamma \mu(B(x_0,r))$, where $x_0\in X$ and $0<r<\infty$. Then there exists a constant $c$, depending on $c_\mu$, $c_P$, $p$, $q$ and $\gamma$, such that
\begin{equation*}
    \biggl( \dashint_{B(x_0,r)} \lvert u\rvert^{q} \dmu \biggr)
^\frac{1}{q}\leq c r \biggl( \dashint_{B(x_0, \tau r)} g_u^p\dmu \biggr)^\frac{1}{p}.
\end{equation*}
\end{lemma}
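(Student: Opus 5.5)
We follow the route suggested by the observation just before the statement: combine the $(q,p)$-Poincar\'e inequality with H\"older's inequality, using the smallness of the support of $u$ to absorb the mean value term. First reduce to $u\ge 0$. Since $g_{\lvert u\rvert}\le g_u$ and $\{\lvert u\rvert>0\}$ is unchanged when $u$ is replaced by $\lvert u\rvert$, it suffices to treat $\lvert u\rvert$. Write $B=B(x_0,r)$.

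By Minkowski's inequality and the $(q,p)$-Poincar\'e inequality,
\begin{equation*}
    \biggl(\dashint_{B}u^q\dmu\biggr)^{\frac1q}\le \biggl(\dashint_{B}\lvert u-u_B\rvert^q\dmu\biggr)^{\frac1q}+u_B\le c_P r\biggl(\dashint_{B(x_0,\tau r)}g_u^p\dmu\biggr)^{\frac1p}+u_B .
\end{equation*}
Next estimate $u_B$ by H\"older's inequality on the set $\{u>0\}\cap B$:
\begin{equation*}
    u_B\le \biggl(\frac{\mu(\{u>0\}\cap B)}{\mu(B)}\biggr)^{1-\frac1q}\biggl(\dashint_B u^q\dmu\biggr)^{\frac1q}\le \gamma^{1-\frac1q}\biggl(\dashint_B u^q\dmu\biggr)^{\frac1q}.
\end{equation*}
Since $q>1$ and $\gamma<1$, we have $\gamma^{1-1/q}<1$.

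To absorb this term into the left-hand side we need $\dashint_B u^q\dmu<\infty$. If $g_u\in L^p(B(x_0,\tau r))$, the Poincar\'e inequality itself gives $u-u_B\in L^q(B)$, and $u_B$ is finite because $u\in L^p$ and $\mu(B)<\infty$. Hence $u\in L^q(B)$, and if the right-hand side of the claim is infinite there is nothing to prove. After absorbing, we get
\begin{equation*}
    \biggl(\dashint_B u^q\dmu\biggr)^{\frac1q}\le \frac{c_P}{1-\gamma^{1-1/q}}\, r\biggl(\dashint_{B(x_0,\tau r)}g_u^p\dmu\biggr)^{\frac1p},
\end{equation*}
which is the claim with $c=c_P/(1-\gamma^{1-1/q})$. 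This constant depends only on $c_P$, $q$ and $\gamma$, which is consistent with the stated dependence.

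The only delicate step is the finiteness needed for the absorption, and it is handled as described. No covering argument is needed, because the ball in the claim is the same as the ball in the Poincar\'e inequality, with the same dilation $\tau$.
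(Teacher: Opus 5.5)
Your proof is correct and is essentially the paper's argument. The paper obtains this lemma (citing Kinnunen--Shanmugalingam) from the observation that a $(q,p)$-Poincar\'e inequality gives the reverse H\"older-type bound with $\epsilon=0$. It then uses the same H\"older-on-the-support estimate $u_B\le\gamma^{1-1/q}\bigl(\dashint_B u^q\dmu\bigr)^{1/q}$ and absorption as in the proof of \Cref{lemma: charact of poinc}, and your reduction to $\lvert u\rvert$ and your finiteness check before absorbing fill in details the paper leaves implicit.
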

The previous lemma implies the following Sobolev inequality for functions with zero boundary values, see \cite{KinnunenShanmugalingam}.
\begin{lemma}
\label{lemma: sobolev from poincare}
Assume that $\mu$ is doubling. Let $1\le p<\infty$ and $1<q<\infty$.
Assume that a $(q,p)$-Poincar\'e inequality holds in $(X,d,\mu)$. Then there exists a constant $c$, depending on $c_\mu$, $c_P$, $p$ and $q$, such that
\begin{equation*}
    \biggl(\dashint_{B(x_0,r)} \lvert u\rvert^q \dmu\biggr)^\frac{1}{q} \leq cr \biggl(\dashint_{B(x_0,r)} g_u^p \dmu\biggr)^\frac{1}{p}
\end{equation*}
for every $u\in N^{1,p}_0(B(x_0,r))$, where $x_0\in X$ and $0<r\leq\frac13\diam(X)$.
\end{lemma}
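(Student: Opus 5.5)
The plan is to reduce the statement to \Cref{eq: Poincare for function zero on large set}. For that we must show that $u\in N^{1,p}_0(B(x_0,r))$ vanishes on a definite fraction of a slightly larger ball. Let $u\in N^{1,p}_0(B(x_0,r))$ with $0<r\le\frac13\diam(X)$. Extended by zero, $u$ lies in $N^{1,p}(X)$. Moreover $g_u=0$ almost everywhere outside $B(x_0,r)$, since $u=0$ there.

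\textbf{Step 1: $u$ vanishes on a large part of $B(x_0,2r)$.} Since $X$ is geodesic and $2r<\diam(X)$, there is a point $y$ with $d(x_0,y)=\frac{3}{2}r$. Then $B(y,\frac r2)\subset B(x_0,2r)\setminus B(x_0,r)$, so $u=0$ on $B(y,\frac r2)$. Also $B(x_0,2r)\subset B(y,\frac72 r)$, so by doubling $\mu(B(y,\frac r2))\ge c\,\mu(B(x_0,2r))$ with $c$ depending only on $c_\mu$. Hence
\[
\mu(\{|u|>0\}\cap B(x_0,2r))\le \gamma\,\mu(B(x_0,2r))
\]
with $\gamma=1-c<1$ depending only on $c_\mu$. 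This is the one step that uses $r\le\frac13\diam(X)$.

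\textbf{Step 2: apply the previous lemma on the ball $B(x_0,2r)$.} \Cref{eq: Poincare for function zero on large set} gives
\[
\biggl(\dashint_{B(x_0,2r)}|u|^q\dmu\biggr)^{1/q}\le c\,r\biggl(\dashint_{B(x_0,2\tau r)}g_u^p\dmu\biggr)^{1/p}.
\]

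\textbf{Step 3: return to $B(x_0,r)$.} On the left-hand side, $u$ vanishes outside $B(x_0,r)$, and doubling gives $\dashint_{B(x_0,r)}|u|^q\dmu\le c_\mu\dashint_{B(x_0,2r)}|u|^q\dmu$. On the right-hand side, $g_u$ vanishes outside $B(x_0,r)$, so
\[
\int_{B(x_0,2\tau r)}g_u^p\dmu=\int_{B(x_0,r)}g_u^p\dmu, \qquad \mu(B(x_0,2\tau r))\ge\mu(B(x_0,r)).
\]
Therefore $\dashint_{B(x_0,2\tau r)}g_u^p\dmu\le\dashint_{B(x_0,r)}g_u^p\dmu$. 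This is why no dependence on $\tau$ survives beyond the constant coming from the previous lemma.

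Combining Steps 2 and 3 gives the claim, with $c$ depending on $c_\mu$, $c_P$, $p$ and $q$; any dependence on $\tau$ enters only through the constant of \Cref{eq: Poincare for function zero on large set}. I expect no serious obstacle. The only delicate point is Step 1, where the geodesic property and $r\le\frac13\diam X$ are needed to guarantee that the annulus carries a quantitative portion of the measure.
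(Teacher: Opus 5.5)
Your proposal is correct and follows the route the paper intends. The paper gives no separate argument for this lemma; it only says the lemma follows from \Cref{eq: Poincare for function zero on large set} (citing Kinnunen--Shanmugalingam), and your Steps 1--3 carry out exactly that reduction.

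One small fix in Step 1. The point $y$ with $d(x_0,y)=\frac32 r$ exists because $\diam(X)\ge 3r$ and complete doubling spaces are proper. The condition $2r<\diam(X)$ alone is not enough: it only gives a point at distance greater than $r$ from $x_0$. Alternatively, take $d(x_0,y)=\frac54 r$ and use the ball $B(y,\frac r4)$.
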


\subsection{Parabolic Sobolev spaces}
We briefly discuss time dependent Sobolev spaces that are relevant to the doubly nonlinear equation \eqref{eq: PDE}.
\begin{definition}\label{def.pnewtonian}
Let $\Omega$ be an open subset of $X$, $0<T<\infty$ and $1\le p<\infty$. 
The parabolic Newton--Sobolev space $L^{p}(0, T;N^{1,p}(\Omega))$ consists of  strongly measurable functions 
$u:(0,T)\to N^{1,p}(\Omega)$ with the norm
\begin{equation*}
\Vert u\Vert_{L^{p}(0, T;N^{1,p}(\Omega))}
=\biggl(\int_{0}^{T}\Vert u(t)\Vert_{N^{1,p}(\Omega)}^{p}\dt\biggr)^{\frac1p}<\infty.
\end{equation*}
The integration over time is taken with respect to the one-dimensional Lebesgue measure. It is also possible to consider parabolic Newton--Sobolev spaces over other time intervals, for example, $L^{p}(t_1,t_2;N^{1,p}(\Omega))$, $-\infty<t_1<t_2<\infty$, or $L^{p}(-\infty,\infty;N^{1,p}(\Omega))$.
\end{definition}

The product measure in the space $X\times (0,T)$, $0<T<\infty$, is denoted by $\nu$.
If $u\in L^{p}(0, T;N^{1,p}(\Omega))$, it is immediate that $u\in L^{p}(0, T;L^{p}(\Omega))$. The space-time cylinder is denoted by $\Omega_{T}=\Omega\times(0, T)$. For $u\in L^{p}(0, T;L^{p}(\Omega))$, there exists a $\nu$-measurable representative $u:\Omega_T\to[-\infty,\infty]$ such that $u(t)=u(\cdot,t)$ for almost every $0<t<T$ and
\[
\int_{\Omega_T}|u|^p\dnu
=\int_0^T\int_\Omega|u(x,t)|^p\dmu\dt
=\int_0^T\Vert u(t)\Vert_{L^p(\Omega)}^p\dt.
\]
If $u\in L^{p}(0, T;N^{1,p}(\Omega))$, we may conclude that the map $t\mapsto g_{u(t)}$ belongs to $L^{p}(0, T;L^{p}(\Omega))$. Moreover, we find a $\nu$-measurable representative
$g_u:\Omega_T\to[-\infty,\infty]$ such that $g_u(t)=g_u(\cdot,t)$ for almost every $0<t<T$ and 
\[
\int_{\Omega_T}g^p\dnu
=\int_0^T\int_\Omega g_u(x,t)^p\dmu\dt
=\int_0^T\Vert g_u(t)\Vert_{L^p(\Omega)}^p\dt.
\]
We shall implicitly identify $u$ and $g_u$ with their $\nu$-measurable representatives.

The parabolic Newton--Sobolev space with zero lateral boundary values is defined by saying that $u\in L^p(0,T; N^{1,p}_0(\Omega))$ if $u\in L^{p}(0, T;N^{1,p}(\Omega))$ and $u(t)\in N^{1,p}_0(\Omega)$ for almost every $0<t<T$. The following is a parabolic Sobolev embedding for functions $u\in L^p(0,T; N^{1,p}_0(\Omega))$.
\begin{lemma}
\label{lemma: par sob}
Assume that $\mu$ is doubling and that a $(1,p)$-Poincar\'e inequality holds in $(X,d,\mu)$. Then there exists $\kappa >1$, depending only on $p$ and $c_\mu$, and a constant $c>0$, depending on $c_\mu$, $c_P$ and $p$, such that 
\begin{equation*}
    \int_{0}^{T}\dashint_{B(x_0,r)} \lvert u\rvert^{p\kappa}\dmu\dt \leq c r^p \int_{0}^{T}\dashint_{B(x_0,r)} g_u^p\dmu\dt \esssup_{0<t<T}\biggl( \dashint_{B(x_0,r)} u^p\dmu \biggr)^{\kappa-1}
\end{equation*}
for every $u\in L^p(0,T;N^{1,p}_0(B(x_0,r)))\cap L^\infty(0,T;L^p(B(x_0,r)))$,
where $x_0\in X$ and $0<r\leq\frac13\diam(X)$.
\end{lemma}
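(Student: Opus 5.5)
We will combine the Sobolev inequality for functions with zero boundary values with Hölder's inequality at each fixed time, and then integrate in time.

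Fix $q>p$ such that a $(q,p)$-Poincar\'e inequality holds. By \Cref{lemma: self improvement of LHS in Poincaŕe}, if $p<Q$ we may take $q=p^*=\frac{pQ}{Q-p}$. If $p\ge Q$, we may take any finite $q>p$, for instance $q=2p$. In either case $q$ depends only on $p$ and $c_\mu$, since $Q=\log_2 c_\mu$. Then apply \Cref{lemma: sobolev from poincare} (here $q>1$ and $r\le\frac13\diam(X)$) to $u(t)\in N^{1,p}_0(B(x_0,r))$ for almost every $t$. Writing $B=B(x_0,r)$, this gives
\[
\biggl(\dashint_{B}\lvert u(t)\rvert^q\dmu\biggr)^{\frac1q}\le c\, r\biggl(\dashint_B g_{u(t)}^p\dmu\biggr)^{\frac1p}.
\]

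Define $\kappa$ by the interpolation relation
\[
\frac{1}{p\kappa}=\frac{\theta}{q}+\frac{1-\theta}{p},
\qquad \theta=\frac1\kappa .
\]
That is, we want $p\kappa=\theta q+(1-\theta)p$ with $\theta\, p\kappa = p$. Solving gives $\kappa=1+\frac{q-p}{q}\cdot\frac{p}{p}$, more concretely $\kappa = 2-\frac{p}{q}>1$. Hölder's inequality, with exponents $\frac{q}{p}$ and $\frac{q}{q-p}$, applied to $\lvert u\rvert^{p\kappa}=\lvert u\rvert^{p}\cdot\lvert u\rvert^{p(\kappa-1)}$ gives
\[
\dashint_B \lvert u\rvert^{p\kappa}\dmu\le\biggl(\dashint_B\lvert u\rvert^q\dmu\biggr)^{\frac pq}\biggl(\dashint_B \lvert u\rvert^{\frac{p(\kappa-1)q}{q-p}}\dmu\biggr)^{\frac{q-p}{q}} .
\]
With $\kappa-1=\frac{q-p}{q}$, the second exponent is exactly $p$, so the second factor equals $\bigl(\dashint_B\lvert u\rvert^p\dmu\bigr)^{\kappa-1}$.

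Inserting the Sobolev bound gives the pointwise-in-time estimate
\[
\dashint_B\lvert u(t)\rvert^{p\kappa}\dmu\le c\, r^p\dashint_B g_{u(t)}^p\dmu\biggl(\dashint_B\lvert u(t)\rvert^p\dmu\biggr)^{\kappa-1}.
\]
We then bound the last factor by its essential supremum over $0<t<T$, which is finite by the assumption $u\in L^\infty(0,T;L^p(B))$, and integrate over $(0,T)$. This yields the claim.

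The only delicate points are measurability and the a.e.-in-time application of the lemma, which follow from the representatives fixed in the discussion preceding \Cref{lemma: par sob}, and checking that the constants depend only on $c_\mu$, $c_P$ and $p$. In the case $p\ge Q$, the choice $q=2p$ gives $\kappa=\frac32$, and the constant of the $(q,p)$-Poincar\'e inequality depends only on $c_\mu$, $c_P$ and $p$. I expect no real obstacle beyond this bookkeeping.
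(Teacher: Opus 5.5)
Your proof is correct and is essentially the paper's argument: the paper also sets $\kappa = 2-\frac{p}{p^*}$, splits $\lvert u\rvert^{p\kappa}=\lvert u\rvert^{p}\lvert u\rvert^{p(\kappa-1)}$, applies H\"older's inequality with exponents $\frac{p^*}{p}$ and $\frac{p^*}{p^*-p}$, uses \Cref{lemma: sobolev from poincare} at almost every time, and integrates over $(0,T)$. The only slip is your restatement ``$p\kappa=\theta q+(1-\theta)p$'', which is not equivalent to the correct interpolation relation $\frac{1}{p\kappa}=\frac{\theta}{q}+\frac{1-\theta}{p}$; this is harmless, since the value $\kappa=2-\frac{p}{q}$ and the H\"older computation you actually use are right.
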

\begin{proof}
Let $\kappa=2-\frac{p}{p^*}$, where $p^*$ is as in \Cref{lemma: self improvement of LHS in Poincaŕe} and let $0<t<T$. H\"older's inequality and \Cref{lemma: sobolev from poincare} implies
\begin{equation*}
\begin{split}
    \dashint_{B(x_0,r)} \lvert u\rvert^{p\kappa}\dmu & =  \dashint_{B(x_0,r)} \lvert u\rvert^p \lvert u\rvert^{p(\kappa-1)}\dmu \\
    & \leq \biggl( \dashint_{B(x_0,r)} \lvert u\rvert^{p^*} \dmu\biggr)^\frac{p}{p^*} \biggl(\dashint_{B(x_0,r)} \lvert u\rvert^{p}\dmu\biggr)^{\kappa-1} \\
    & \leq cr^p \dashint_{B(x_0,r)} g_u^p \dmu \biggl(\dashint_{B(x_0,r)} \lvert u\rvert^{p}\dmu\biggr)^{\kappa-1}.
\end{split}
\end{equation*}
The claim follows by integrating the above estimate over $0<t<T$.
\end{proof}

\section{Doubly nonlinear equation}

In this section we study variational solutions to a doubly nonlinear equation and derive related energy estimates. We emphasize that neither the doubling condition nor a Poincar\'e inequality is required in this section. We use the convention that the power of a signed number $b$ is given by
\begin{equation}\label{eq: signconv}
    b^\alpha = \begin{cases}
            \lvert b\rvert^{\alpha-1}b ,&\quad b \neq 0,\\
            0, & \quad b=0. 
        \end{cases}
\end{equation}

We begin with the definition of a variational solution of the doubly nonlinear equation \eqref{eq: PDE}. At this stage, we fix the parameter $1<p<\infty$ for the rest of the article.
\begin{definition}\label{def.soldne}
Let $\Omega$ be an open subset of $X$, $0<T<\infty$ and $1<p<\infty$.
A function $u\in L^p(0,T;N^{1,p}(\Omega))$ is a  solution of \eqref{eq: PDE}
in $\Omega_T=\Omega\times(0,T)$ if 
\begin{equation}
\label{eq: var ineq}
    p\int_{\Omega_T}u^{p-1}\partial_t{\varphi}\dnu  + \int_{\Omega_T} g_u^p\dnu \leq \int_{\Omega_T} g_{u+\varphi}^p\dnu,
\end{equation}
for every $\varphi\in L^p(0,T;N^{1,p}(\Omega))$ with $\partial_t \varphi\in L^p(\Omega_T)$ and  $\supp \varphi\subset\Omega_T$. If \eqref{eq: var ineq} holds for all such $\varphi$ with $\varphi\geq 0$ we say that $u$ is a  supersolution. 
If \eqref{eq: var ineq} holds for all such $\varphi$ with $\varphi\leq 0$ we say that $u$ is a subsolution.
\end{definition}

The following direct consequence of the definition of a solution will be useful.
\begin{lemma}
\label{lemma: var sol implies weak sol with inequality}
Assume that $u$ is a solution of \eqref{eq: PDE} in $\Omega_T$. Then
\begin{equation*}
    \int_{\Omega_T} u^{p-1}\partial_t\varphi \dnu \leq \int_{\Omega_T} g_u^{p-1}g_\varphi\dnu 
\end{equation*}
for every $\varphi\in L^p(0,T;N^{1,p}(\Omega))$ with $\partial_t \varphi\in L^p(\Omega_T)$ and  $\supp \varphi\subset\Omega_T$.
\end{lemma}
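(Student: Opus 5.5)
The plan is to test the variational inequality \eqref{eq: var ineq} with $s\varphi$ in place of $\varphi$, for a small parameter $s>0$, and then let $s\to 0^+$. Fix $\varphi$ as in the statement. For every $s>0$, the function $s\varphi$ is again admissible: it lies in $L^p(0,T;N^{1,p}(\Omega))$, satisfies $\partial_t(s\varphi)\in L^p(\Omega_T)$, and has support in $\Omega_T$. So \eqref{eq: var ineq} gives
\begin{equation*}
    p s\int_{\Omega_T}u^{p-1}\partial_t\varphi\dnu \leq \int_{\Omega_T}\bigl(g_{u+s\varphi}^p-g_u^p\bigr)\dnu .
\end{equation*}

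Next I bound the right-hand side pointwise. The sum rule for $p$-weak upper gradients says that $g_u+s g_\varphi$ is a $p$-weak upper gradient of $u+s\varphi$. Since $g_{u+s\varphi}$ is minimal, $g_{u+s\varphi}\le g_u+s g_\varphi$ holds $\nu$-almost everywhere. This inequality is needed for almost every $t$; it holds slice by slice and transfers to the $\nu$-measurable representatives. Dividing by $ps$, we get
\begin{equation*}
    \int_{\Omega_T}u^{p-1}\partial_t\varphi\dnu \leq \int_{\Omega_T}\frac{(g_u+s g_\varphi)^p-g_u^p}{ps}\dnu .
\end{equation*}

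It remains to pass to the limit $s\to0^+$. Pointwise, the integrand converges to $g_u^{p-1}g_\varphi$, since it is a difference quotient of $\tau\mapsto \frac1p(g_u+\tau g_\varphi)^p$ at $\tau=0$. For $0<s\le1$, the mean value theorem gives the bound
\begin{equation*}
\frac{(g_u+s g_\varphi)^p-g_u^p}{ps}\le (g_u+g_\varphi)^{p-1}g_\varphi .
\end{equation*}
This majorant lies in $L^1(\Omega_T)$ by H\"older's inequality with exponents $p/(p-1)$ and $p$, because $g_u,g_\varphi\in L^p(\Omega_T)$. Dominated convergence then yields the claim. The only step that needs real care is this domination, together with the fact that $g_u,g_\varphi\in L^p(\Omega_T)$, which follows from $u,\varphi\in L^p(0,T;N^{1,p}(\Omega))$. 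Note that the left-hand side is finite, since $u^{p-1}\in L^{p/(p-1)}$ and $\partial_t\varphi\in L^p$.
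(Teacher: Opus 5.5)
Your proposal is correct and follows the paper's proof essentially verbatim. You test \eqref{eq: var ineq} with $s\varphi$, use $g_{u+s\varphi}\le g_u+sg_\varphi$, and apply the mean value theorem to bound the difference quotient by $(g_u+sg_\varphi)^{p-1}g_\varphi$ before letting $s\to0$. Your $s$-independent majorant $(g_u+g_\varphi)^{p-1}g_\varphi\in L^1(\Omega_T)$ makes explicit the dominated convergence step that the paper leaves implicit when it lets $h\to 0$.
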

\begin{proof}
Let $h>0$ and test \eqref{eq: var ineq} with $h\varphi$ to get
\begin{equation*}
    p\int_{\Omega_T} u^{p-1} \partial_t{\varphi}\dnu  \leq \frac{1}{h}\int_{\Omega_T}(g_{u+h\varphi}^p-g_u^p)\dnu
\end{equation*}
The mean value theorem implies that
\begin{equation*}
    g_{u+h\varphi}^p-g_u^p\leq (g_u +h g_{\varphi})^p-g_u^p \leq p (g_u + hg_\varphi)^{p-1}hg_\varphi,
\end{equation*}
and hence 
\begin{equation*}
    p\int_{\Omega_T}u^{p-1}\partial_t{\varphi}\dnu  \leq p\int_{\Omega_T} (g_u + hg_\varphi)^{p-1}g_\varphi\dnu.
\end{equation*}
The claim follows by letting $h\to 0$.
\end{proof}

\subsection{Time mollification}
In general, the time derivative of a weak subsolution or supersolution does not
belong to $L^p(\Omega_T)$ and does not even exist a priori. To be able to derive a suitable energy estimate,
we use the following mollification in time.
Let $u\in L^p(0,T;N^{1,p}(\Omega))$. For $\delta>0$, we define 
\begin{equation*}
    u_{\delta}(t) =\frac{1}{\delta}\int_0^te^{\frac{s-t}{\delta}}u(x,s)\ds
\end{equation*}
For the following lemma we refer to \cite[Lemma B.1]{BogeleinDuzaarMarcellini}.
\begin{lemma}
Let $u\in L^p(0,T;N^{1,p}(\Omega))$.
Then $u_{\delta}\in L^p(0,T;N^{1,p}(\Omega))$ and 
\begin{equation*}
    \|u_\delta\|_{L^p(0,T;N^{1,p}(\Omega))} \leq \|u\|_{L^p(0,T;N^{1,p}(\Omega))}
\end{equation*}
for every $\delta>0$.
Moreover, we have $\partial_t u_\delta\in L^p(0,T;N^{1,p}(\Omega))$ and 
\begin{equation}
\label{eq: time der of mollification}
    \partial_t u_\delta = \frac{1}{\delta}(u-u_\delta)
\end{equation}
for every $\delta>0$.
\end{lemma}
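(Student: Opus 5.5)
The plan is to treat $u_\delta$ as a convolution in time, taken in the Banach space $N^{1,p}(\Omega)$, and to derive both claims from standard facts about Bochner integrals.

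\emph{Step 1: well-definedness.} Write
\[
u_\delta(t)=\int_0^t k_\delta(t-s)\,u(s)\ds, \qquad k_\delta(\tau)=\frac{1}{\delta}e^{-\frac{\tau}{\delta}}\ \text{for } \tau\geq 0,
\]
and note that $\int_0^\infty k_\delta(\tau)\,d\tau=1$. The quotient space $N^{1,p}(\Omega)$ is a Banach space, and $u:(0,T)\to N^{1,p}(\Omega)$ is strongly measurable with $\int_0^T\|u(s)\|_{N^{1,p}(\Omega)}\ds<\infty$ by H\"older's inequality. Hence, for every $t$, the integral above exists as a Bochner integral with values in $N^{1,p}(\Omega)$. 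The map $t\mapsto u_\delta(t)$ is then strongly measurable, and in fact continuous.

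\emph{Step 2: the norm bound.} The Bochner integral satisfies $\|\int f\|\leq\int\|f\|$, so
\[
\|u_\delta(t)\|_{N^{1,p}(\Omega)}\leq \int_0^t k_\delta(t-s)\,\|u(s)\|_{N^{1,p}(\Omega)}\ds .
\]
Extend $s\mapsto\|u(s)\|_{N^{1,p}(\Omega)}$ by zero outside $(0,T)$. The right-hand side is then the convolution of an $L^p(\mathbb R)$ function with a kernel of $L^1$-norm one. Young's convolution inequality, or equivalently Minkowski's integral inequality, gives
\[
\|u_\delta\|_{L^p(0,T;N^{1,p}(\Omega))}\leq\|u\|_{L^p(0,T;N^{1,p}(\Omega))}.
\]
It is convenient to work with the norm of the space as a whole: this avoids having to identify $g_{u_\delta(t)}$ pointwise with $(g_u)_\delta$. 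That identification would only give the inequality $g_{u_\delta(t)}\leq (g_u)_\delta(t)$ and is not needed here.

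\emph{Step 3: the time derivative.} Write
\[
u_\delta(t)=\frac{1}{\delta}e^{-\frac{t}{\delta}}\,F(t), \qquad F(t)=\int_0^t e^{\frac{s}{\delta}}u(s)\ds .
\]
The integrand $e^{s/\delta}u(s)$ is Bochner integrable on $(0,T)$ with values in $N^{1,p}(\Omega)$. By the Lebesgue differentiation theorem for Bochner integrals, $F$ is absolutely continuous and $F'(t)=e^{t/\delta}u(t)$ for a.e.\ $t$. The product rule with the scalar factor $e^{-t/\delta}/\delta$ then yields
\[
\partial_t u_\delta=-\frac{1}{\delta}u_\delta+\frac{1}{\delta}u ,
\]
which is \eqref{eq: time der of mollification}. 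Since $u$ and, by Step 2, $u_\delta$ both belong to $L^p(0,T;N^{1,p}(\Omega))$, so does $\partial_t u_\delta$.

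The only step requiring care is the functional-analytic framework. One must check that strong measurability in the quotient space $N^{1,p}(\Omega)$ is compatible with the $\nu$-measurable representatives used elsewhere in the paper, so that the abstract derivative in Step 3 agrees with the pointwise derivative of the representative a.e.\ in $\Omega_T$. This follows because the embedding $N^{1,p}(\Omega)\hookrightarrow L^p(\Omega)$ is continuous and Bochner integrals commute with continuous linear maps. Everything else is routine, and this is precisely the content of \cite[Lemma B.1]{BogeleinDuzaarMarcellini} carried over with $N^{1,p}(\Omega)$ in place of $W^{1,p}(\Omega)$.
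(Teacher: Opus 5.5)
Your proposal is correct and follows essentially the same route as the paper. The paper gives no argument of its own but cites \cite[Lemma B.1]{BogeleinDuzaarMarcellini}, which treats this exponential mollifier for Bochner functions with values in a Banach space, and your argument is exactly that proof specialized to $N^{1,p}(\Omega)$: the Bochner integral, Young's inequality for the norm bound, and Lebesgue differentiation with the product rule for $\partial_t u_\delta=\frac{1}{\delta}(u-u_\delta)$, including your remark that working with the norm of $N^{1,p}(\Omega)$ avoids identifying $g_{u_\delta(t)}$ with $(g_u)_\delta(t)$.
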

We also have the convergence in norm and convergence of mixed terms, as in the following lemma.
\begin{lemma}
\label{lemma: convergence of time mollification}
Let $u\in L^p(0,T;N^{1,p}(\Omega))$, $1\le p<\infty$. Then 
\begin{equation*}
    \lim_{\delta\to 0} \| u_\delta -u\|_{L^p(0,T;N^{1,p}(\Omega))}=0,
\end{equation*}
and
\begin{equation*}
    \lim_{\delta \to 0} \frac{1}{\delta}\int_0^T \int_0^{T-t}\| u(t)-u(t+s)\|_{N^{1,p}(\Omega)}^p e^{-\frac{s}{\delta}}\ds\dt=0.
\end{equation*}
\end{lemma}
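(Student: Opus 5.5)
The plan is to prove both claims by reducing them to the corresponding facts for Bochner-valued functions, with $V=N^{1,p}(\Omega)$ regarded as a Banach space. The key identity is that the mollification is a weighted average of past translates. Extend $u$ by zero for $t<0$. Substituting $s=t-\sigma$ gives
\[
u_\delta(t)=\int_0^t \frac1\delta e^{-\sigma/\delta}u(t-\sigma)\,d\sigma ,
\]
so that
\[
u(t)-u_\delta(t)=e^{-t/\delta}u(t)+\int_0^t \frac1\delta e^{-\sigma/\delta}\bigl(u(t)-u(t-\sigma)\bigr)\,d\sigma .
\]

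For the first claim, I would estimate the $V$-norm of the second term using Minkowski's integral inequality in the form $\|\int f\|_V\le\int\|f\|_V$, followed by Jensen's inequality with respect to the probability-like weight $\frac1\delta e^{-\sigma/\delta}\,d\sigma$, whose total mass is at most $1$. This gives
\[
\|u-u_\delta\|^p_{L^p(0,T;V)}\le 2^{p-1}\int_0^T e^{-pt/\delta}\|u(t)\|_V^p\,dt+2^{p-1}\int_0^T\frac1\delta e^{-\sigma/\delta}\int_\sigma^T\|u(t)-u(t-\sigma)\|_V^p\,dt\,d\sigma .
\]
The first term tends to zero by dominated convergence. The inner integral in the second term is $\omega(\sigma)=\|\tau_\sigma u-u\|^p_{L^p(\sigma,T;V)}$. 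This quantity is bounded by $2^p\|u\|^p$ and tends to $0$ as $\sigma\to0$, by continuity of translations in $L^p(0,T;V)$. After the substitution $\sigma=\delta s$, the second term becomes $\int_0^\infty e^{-s}\omega(\delta s)\,ds$, which tends to $0$ by dominated convergence.

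For the second claim, Fubini shows that the expression equals
\[
\int_0^T\frac1\delta e^{-s/\delta}\,\omega'(s)\,ds,\qquad \omega'(s)=\int_0^{T-s}\|u(t)-u(t+s)\|_V^p\,dt,
\]
and $\omega'$ is again bounded and tends to zero as $s\to0$. The same rescaling argument as above then concludes.

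The main obstacle is the continuity of translations in $L^p(0,T;V)$, since $V$ is an arbitrary Banach space and may be nonseparable. I would handle this by density. Strongly measurable functions are a.e. limits of simple functions, and dominated convergence upgrades this to density of simple functions in $L^p(0,T;V)$. One can further reduce to functions of the form $\sum\chi_{E_i}v_i$ with $E_i$ intervals. For such step functions, translation continuity is elementary: it reduces to continuity of translations of scalar indicator functions in $L^p$. A standard $\varepsilon/3$ argument then finishes the proof, because translation does not increase the norm.
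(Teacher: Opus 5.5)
Your proof is correct, but it is organized differently from the paper's.

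The paper treats only the second claim directly. It picks $v\in C([0,T];N^{1,p}(\Omega))$ with $\|u-v\|_{L^p(0,T;N^{1,p}(\Omega))}<\epsilon$ and splits $\|u(t)-u(t+s)\|$ by the triangle inequality. Two of the resulting pieces are bounded by $c\|u-v\|^p$, using only that $\frac1\delta e^{-s/\delta}$ has total mass at most one. The middle piece is handled by uniform continuity of $v$ and a cut of the $s$-integral at a threshold $\eta$, which gives $c\epsilon^p+ce^{-\eta/\delta}$. The first claim is declared ``similar''.

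You instead reduce both claims to the translation modulus $\omega(\sigma)=\int_\sigma^T\|u(t)-u(t-\sigma)\|_{N^{1,p}(\Omega)}^p\dt$. The second expression equals $\int_0^{T/\delta}e^{-s}\omega(\delta s)\ds$ exactly; your $\omega'$ coincides with $\omega$ after a change of variables in $t$. For the first claim, $\|u-u_\delta\|^p_{L^p(0,T;N^{1,p}(\Omega))}$ is bounded by a multiple of the same integral plus the boundary-layer term $\int_0^T e^{-pt/\delta}\|u(t)\|^p\dt$. Dominated convergence then finishes both.

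The paper's version is shorter because it never rewrites the kernel. Yours is a single approximate-identity argument that handles both claims explicitly. In particular, it accounts for the layer near $t=0$ where $u_\delta$ has not yet caught up with $u$, which the paper's ``similar'' passes over. Your only density input is step functions, slightly more elementary than density of $C([0,T];N^{1,p}(\Omega))$, which is usually proved via step functions anyway. Neither route needs separability of $N^{1,p}(\Omega)$, since strongly measurable functions are essentially separably valued.
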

\begin{proof}
We only prove the second claim, since the proof of the first claim is similar. Let $\epsilon>0$. Since $C([0,T];N^{1,p}(\Omega))$ is dense in $L^p(0,T;N^{1,p}(\Omega))$, we may take a function $v\in C([0,T];N^{1,p}(\Omega))$ with $\|u-v\|_{L^p(0,T;N^{1,p}(\Omega))}<\epsilon$. Then 
\begin{equation*}
\begin{split}
    &\frac{1}{\delta}\int_0^T \int_0^{T-t}\| u(t)-u(t+s)\|_{N^{1,p}(\Omega)}^p e^{-\frac{s}{\delta}}\ds\dt \\
    &\qquad\leq \frac{c}{\delta}\int_0^T \int_0^{T-t} \bigl( \| u(t)-v(t)\|_{N^{1,p}(\Omega)}^p+\|v(t)-v(t+s)\|_{N^{1,p}(\Omega)}^p\\ 
    &\qquad\qquad\qquad\qquad\qquad+\|v(t+s)-u(t+s)\|_{N^{1,p}(\Omega)}^p \bigr)e^{-\frac{s}{\delta}}\ds\dt \\
    &\qquad \leq c \|u-v\|_{L^p(0,T;N^{1,p}(\Omega))}^p + \frac{c}{\delta}\int_0^T \int_0^{T-t} \|v(t)-v(t+s)\|_{N^{1,p}(\Omega)}^p e^{-\frac{s}{\delta}}\ds\dt.
\end{split}
\end{equation*}
Let $M=\max_{t\in [0,T]} \| v(t)\|_{N^{1,p}(\Omega)}$. Since $v\in C([0,T];N^{1,p}(\Omega))$ there exists $\eta>0$ such that  $\| v(t_1) -v(t_2)\|_{N^{1,p}(\Omega)}\leq \epsilon$ whenever $0<t_1,t_2<T $ with $\lvert t_1-t_2\rvert<\eta$. To estimate the second term on the right-hand side of the estimate above, we split the inner integral into parts as 
\begin{equation*}
\begin{split}
    &\frac{c}{\delta}\int_0^T \int_0^{T-t} \|v(t)-v(t+s)\|_{N^{1,p}(\Omega)}^p e^{-\frac{s}{\delta}}\ds\dt\\
    & = \frac{c}{\delta}\int_0^T \int_0^{\eta} \|v(t)-v(t+s)\|_{N^{1,p}(\Omega)}^p e^{-\frac{s}{\delta}}\ds\dt \\
    &\qquad\qquad+ \frac{c}{\delta}\int_0^T \int_\eta^{T-t} \|v(t)-v(t+s)\|_{N^{1,p}(\Omega)}^p e^{-\frac{s}{\delta}}\ds\dt \\
    &\qquad \leq \frac{c \epsilon ^p}{\delta}\int_0^T\int_0^\eta e^{-\frac{s}{\delta}}\ds\dt + cM \int_0^T\int_\eta^{T-t}e^{-\frac{s}{\delta}} 
     \leq cT\epsilon^p + cMT e^{-\frac{\eta}{\delta}}.
\end{split}
\end{equation*}
It follows that
\begin{equation*}
    \frac{1}{\delta}\int_0^T \int_0^{T-t}\| u(t)-u(t+s)\|_{N^{1,p}(\Omega)}^p e^{-\frac{s}{\delta}}\ds\dt \leq c\epsilon^p + ce^{-\frac{\eta}{\delta}}.
\end{equation*}
The claim follws by letting first $\delta\to 0$ and then $\epsilon \to 0$.
\end{proof}
Though we defined $u_{\delta}$ only for $u\in L^p(0,T;N^{1,p}(\Omega))$, the definition makes sense more generally for $u\in L^p(0,T;L^{p}(\Omega))$. Convergence of $u^\delta$ in the norm of $L^p(0,T;N^{1,p}(\Omega))$ follows as in the previous lemma.

\subsection{Energy estimate}
Let
\begin{equation*}
    G_{\pm}(w,k)=\pm (p-1) \int_{k}^{w}\lvert s\rvert^{p-2}(s-k)_{\pm}\ds.
\end{equation*}
A change of variables shows that 
\begin{equation}
    \label{eq: primitive for energy est after change of var}
    G_\pm(w,k)=\int_{k^{p-1}}^{u^{p-1}}(s^\frac{1}{p-1}-k)_\pm\ds.
\end{equation}
The following lemma is from \cite[Lemma 2.2]{BogeleinDuzaarLiao}.
\begin{lemma}
There exists a constant $c$, depending on $p$, such that
\begin{equation*}
    \frac{1}{c}(\lvert w\rvert+\lvert k\rvert)^{p-2}(w-k)_\pm^2 \leq G_{\pm}(w,k)\leq c (\lvert w\rvert+\lvert k\rvert)^{p-2}(w-k)_\pm^2,
\end{equation*}
for every $w,k\in \mathbb R$.
\end{lemma}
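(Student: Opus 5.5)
The plan is to prove the two-sided bound for $G_\pm(w,k)$ by elementary real analysis, using scaling and symmetry to reduce to a compact one-parameter family. It suffices to treat $G_+$, because
\[
G_-(w,k)=-(p-1)\int_k^w|s|^{p-2}(s-k)_-\,ds=(p-1)\int_w^k|s|^{p-2}(k-s)\,ds \quad\text{for } w<k.
\]
The substitution $s\mapsto -s$ turns this into $G_+(-w,-k)$. The comparison quantity $(|w|+|k|)^{p-2}(w-k)_\pm^2$ is invariant under $(w,k)\mapsto(-w,-k)$ with $\pm$ swapped.

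For $G_+$, if $w\le k$ both sides vanish, so we may assume $w>k$. Both sides are positively homogeneous of degree $p$ under $(w,k)\mapsto(\lambda w,\lambda k)$, $\lambda>0$. We therefore normalize $|w|+|k|=1$. The statement then becomes
\[
\frac1c (w-k)^2\le (p-1)\int_k^w |s|^{p-2}(s-k)\,ds\le c\,(w-k)^2 .
\]
Here $w>k$ and $|w|+|k|=1$, and every $s\in[k,w]$ satisfies $|s|\le 1$.

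We split by the sign of $p-2$.

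\emph{Case $p\ge2$.} The upper bound is immediate from $|s|^{p-2}\le 1$, which gives $G_+\le \frac{p-1}{2}(w-k)^2$. For the lower bound, the interval $[k,w]$ has length $w-k$. Since $(w-k)\ge |w|-|k|$ and $|w|+|k|=1$, I will use the following observation: the right half $[\frac{k+w}{2},w]$ of the interval contains a subinterval $J$ of length $\ge (w-k)/8$ on which $|s|\ge c_0>0$. If $w\ge 1/4$, take $J=[\max(\frac{k+w}2, \frac w2),w]$. Otherwise $w<1/4$, which forces $k\le -3/4$, so $k$ is far from the origin, and then a similar subinterval works, or one uses the left portion with the weight $(s-k)$ bounded below. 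On $J$ we have $s-k\ge (w-k)/2$, and integrating gives $G_+\ge c(w-k)^2$.

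\emph{Case $1<p<2$.} Now the lower bound is the easy one, since $|s|^{p-2}\ge 1$ yields $G_+\ge\frac{p-1}2(w-k)^2$. For the upper bound we use $(s-k)\le (w-k)$, so
\[
G_+\le (p-1)(w-k)\int_k^w|s|^{p-2}\,ds .
\]
The singularity at $s=0$ is integrable, so the key estimate is $\int_k^w|s|^{p-2}ds\le c(w-k)$. This is the main obstacle: it fails on short intervals near $0$, since $\int_0^\epsilon s^{p-2}\,ds\sim \epsilon^{p-1}\gg\epsilon$. The normalization $|w|+|k|=1$ rules that out. If $w-k\ge 1/4$, the integral is at most $\int_{-1}^1|s|^{p-2}\,ds\le \frac{2}{p-1}\le c(w-k)$. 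If $w-k<1/4$, then $w$ and $k$ share a sign and are both of size $\ge 3/8$, so $|s|\ge 3/8$ on $[k,w]$. In that case $|s|^{p-2}\le (3/8)^{p-2}$, and the bound follows.

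The same near-zero dichotomy (interval long, or interval bounded away from the origin) is exactly what drives the lower bound in the case $p\ge 2$. The cleanest way to write both cases uniformly is to prove the bound $|s|\ge c_0$ on a large portion of $[k,w]$ once and reuse it. All constants depend only on $p$. Undoing the normalization via homogeneity and applying the symmetry gives the claim for $G_-$.
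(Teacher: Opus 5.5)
The paper does not prove this lemma; it quotes it from \cite[Lemma 2.2]{BogeleinDuzaarLiao}. Your proposal is therefore a self-contained replacement for a citation, and it is correct up to one misstated step. The reduction of $G_-$ to $G_+$ via $(w,k)\mapsto(-w,-k)$ is valid. So is the degree-$p$ homogeneity with $\lambda>0$, which preserves $w>k$. The normalization $|w|+|k|=1$ gives $|s|\le 1$ on $[k,w]$.

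The case $1<p<2$ is complete as written. If $w-k<\tfrac14$, then $w,k$ have the same strict sign, since otherwise $w-k=|w|+|k|=1$. Hence $\min\{|w|,|k|\}=\tfrac12(1-(w-k))>\tfrac38$. If $w-k\ge\tfrac14$, the bound $\int_{-1}^1|s|^{p-2}\,ds=\tfrac{2}{p-1}$ suffices.

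The inaccurate step is in the lower bound for $p\ge2$. The claim that $w<\tfrac14$ forces $k\le-\tfrac34$ is false: take $w=-\tfrac25$, $k=-\tfrac35$. For $w<0$ one only gets $k=-1-w<-\tfrac12$. The alternative of using ``the left portion with the weight $(s-k)$ bounded below'' also cannot work, because $s-k$ vanishes at $s=k$.

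Your other claim, that a subinterval of the right half still works, is true. For instance, when $w<\tfrac14$ the interval $J=[\tfrac{k+w}{2},\tfrac{k+w}{2}+\tfrac{w-k}{8}]$ lies in $(-\infty,-\tfrac18]$.

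The cleanest fix is the uniform dichotomy from your last paragraph, written out:
\begin{itemize}
\item If $w-k<\tfrac14$, then $|s|\ge\tfrac38$ on all of $[k,w]$, and $G_+\ge\tfrac{p-1}{2}(\tfrac38)^{p-2}(w-k)^2$.
\item If $w-k\ge\tfrac14$, the right half $[\tfrac{k+w}{2},w]$ has length at least $\tfrac18$, and the set $\{|s|<\tfrac1{32}\}$ covers at most half of it. On the remaining part, of length at least $\tfrac{w-k}{4}$, one has $|s|\ge\tfrac1{32}$ and $s-k\ge\tfrac{w-k}{2}$. This gives $G_+\ge\tfrac{p-1}{8}\,32^{2-p}(w-k)^2$.
\end{itemize}
With this written out, your argument is a complete elementary proof with constants depending only on $p$. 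Compared with the paper's citation, it keeps the energy estimates self-contained at the cost of a short case analysis.
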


The following energy estimate will be useful in the proof of the main result.

\begin{lemma}
\label{lemma: caccioppoli with cutoff}
Let $u$ be a subsolution of \eqref{eq: PDE} in $\Omega_T$. Then there exists a constant $c$, depending on $p$, such that, for any $k\in\mathbb R$ and almost every $0<t_0<T$, we have 
\begin{equation*}
\begin{split}
    & \esssup_{t_0<t<T}p\int_{\Omega} G_+(u(t),k)\varphi^p\dmu + \int_{\Omega_{T}}g_{(u-k)_+}^p\varphi^p\dnu \\
    &\qquad \leq c\int_{\Omega_{T}}\bigl((u-k)_+^p g_{\varphi}^p+G_+(u,k)(\partial_t\varphi^p)_+\bigr)\dnu + p\int_{\Omega} G_+ (u(t_0),k)\varphi^p\dmu 
\end{split}
\end{equation*}
for every nonnegative Lipschitz continuous function $\varphi$ with $\supp \varphi \subset \Omega_T$.
\end{lemma}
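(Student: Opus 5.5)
The plan is to test the variational inequality \eqref{eq: var ineq} with a negative perturbation built from the truncation $(u-k)_+$, after regularizing in time with the exponential mollification $u_\delta$. Formally, the natural test function is $\varphi=-\eta(u-k)_+\varphi^p\chi$, where $\chi$ is a Lipschitz time cutoff that equals one on $(t_0+h,t_1-h)$, is linear on the layers of width $h$ near $t_0$ and $t_1$, and $\eta>0$ is small. Since $\partial_t u$ is not available, I will use instead
\[
-\eta\,\bigl((u_\delta)^{p-1}\text{-based truncation}\bigr)\varphi^p\chi .
\]
Concretely, I mollify $w=u^{p-1}$, set $v_\delta=(w_\delta)^{\frac{1}{p-1}}$, and test with $\psi=-\eta (v_\delta-k)_+\varphi^p\chi$. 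This $\psi$ is nonpositive, lies in $L^p(0,T;N^{1,p}(\Omega))$, has $\partial_t\psi\in L^p$, and has support in $\Omega_T$, so it is admissible for a subsolution.

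\textbf{Gradient side.} I will use the convexity trick. On the set where $(u-k)_+$ is involved, $u+\psi$ is close to the convex combination $(1-\eta\varphi^p\chi)u+\eta\varphi^p\chi\,\min(u,k)$, once $\delta\to0$. By the calculus rule for $w=(1-\eta)u+\eta v$, a $p$-weak upper gradient of $u-\eta\varphi^p\chi(u-k)_+$ is
\[
(1-\eta\varphi^p\chi)g_u+ \eta p\,\varphi^{p-1}g_\varphi (u-k)_+
\]
on $\{u>k\}$, and $g_u$ elsewhere. Raising this to the power $p$ and expanding to first order in $\eta$ gives
\[
g_{u+\psi}^p-g_u^p\le -p\eta\varphi^p\chi g_{(u-k)_+}^p+p^2\eta\, g_{(u-k)_+}^{p-1}\varphi^{p-1}g_\varphi(u-k)_+\chi+O(\eta^2).
\]
I then divide by $\eta$ and let $\eta\to0$, as in \Cref{lemma: var sol implies weak sol with inequality}. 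Young's inequality absorbs the cross term into half of $\int g_{(u-k)_+}^p\varphi^p\chi$, leaving $c\int(u-k)_+^pg_\varphi^p$.

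\textbf{Time side.} The time term is $-p\int u^{p-1}\partial_t\psi$. I will replace $u^{p-1}$ by $w_\delta$, using \eqref{eq: time der of mollification}: the difference $(w-w_\delta)\partial_t(v_\delta-k)_+$ has a favourable sign, because $v_\delta$ is increasing in $w_\delta$ and $\partial_t w_\delta=(w-w_\delta)/\delta$. After integrating by parts in time, the remaining term is $\int\partial_t[G_+(v_\delta,k)]\varphi^p\chi$, using the representation \eqref{eq: primitive for energy est after change of var}, namely $\partial_t G_+=(v_\delta-k)_+\partial_t w_\delta$. A second integration by parts produces $-\int G_+(v_\delta,k)\partial_t(\varphi^p\chi)$. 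I then let $\delta\to0$ using \Cref{lemma: convergence of time mollification} for $w$ in $L^{p'}$ and the bounds on $G_+$.

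\textbf{Limit in $h$.} The $\chi'$ terms become averages of $\int G_+(u(t),k)\varphi^p$ near $t_1$, with a good sign that yields the supremum, and near $t_0$, which yields the initial term at Lebesgue points $t_0$. Taking the supremum over $t_1$ and adding the gradient estimate with $t_1=T$ gives the claim, with $(\partial_t\varphi^p)_+$ bounding the remaining term.

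\textbf{Main obstacle.} The difficult step is the rigorous handling of the time term: showing the sign of the mollification error, and justifying the passage $\delta\to0$ inside the nonlinear truncated test function, including the gradient term with $v_\delta$ in place of $u$. For the latter, I would use $\|v_\delta-u\|_{L^p(N^{1,p})}\to0$, which follows from the mollification lemmas together with continuity of $s\mapsto s^{1/(p-1)}$ in the relevant norms. I would do this first in the case where truncations are Lipschitz, and otherwise argue via the pointwise convexity bound, passing to the limit by lower semicontinuity.
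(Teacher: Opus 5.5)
The step that fails is the time term. You place the forward mollification inside the test function, $\psi=-\eta(v_\delta-k)_+\varphi^p\chi$, so the time derivative in \eqref{eq: var ineq} falls on $\psi$. Writing $u^{p-1}=w=w_\delta+\delta\,\partial_t w_\delta$ (this is \eqref{eq: time der of mollification}) and integrating by parts gives, as $\delta\to0$,
\[
p\int_{\Omega_T}u^{p-1}\partial_t\psi\dnu=-p\eta\int_{\Omega_T}G_+(v_\delta,k)\,\partial_t(\varphi^p\chi)\dnu-p\eta\,\delta\int_{\Omega_T}\partial_t w_\delta\,\partial_t(v_\delta-k)_+\,\varphi^p\chi\dnu+o(1).
\]
You are right that $(w-w_\delta)\,\partial_t(v_\delta-k)_+=\delta\,\partial_t w_\delta\,\partial_t(v_\delta-k)_+\ge0$. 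However, it enters the left-hand side of \eqref{eq: var ineq} with a minus sign, so it cannot be dropped. Your energy inequality therefore comes out with the extra nonnegative term $p\eta\,\delta\int_{\Omega_T}\partial_t w_\delta\,\partial_t(v_\delta-k)_+\varphi^p\chi\dnu$ on the right, and this term does not vanish as $\delta\to0$ in general. For example, on a ball $\Omega$, the function that is constant in $x$, equal to $c>0$ for $t<1$ and to $0$ for $t\ge1$, is a subsolution. For $p=2$ and $0\le k<c$, the extra term converges to $\eta(c^2-k^2)\chi(1)\int_\Omega\varphi(x,1)^2\dmu$. There is also an admissibility problem for $p\neq2$. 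Since $u^{p-1}$ need not belong to $L^p(0,T;N^{1,p}(\Omega))$ (its upper gradient is $(p-1)\lvert u\rvert^{p-2}g_u$), nothing guarantees $v_\delta\in L^p(0,T;N^{1,p}(\Omega))$, let alone $\lVert v_\delta-u\rVert_{L^p(0,T;N^{1,p}(\Omega))}\to0$; the mollification lemmas concern $u_\delta$, not this nonlinear average. Moreover, $\partial_t w_\delta$ is only in $L^{p'}$, so $\partial_t\psi\in L^p$ can fail.

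The missing idea is to put the mollification on $u^{p-1}$ in the inequality, not into the test function. The paper uses the mollified inequality \eqref{eq: mollified var ineq}. There the time derivative falls on $(u^{p-1})_\delta$, and the test function only has to lie in $L^p(0,T;N^{1,p}_0(\Omega))$, with no time regularity. Such an inequality follows by testing \eqref{eq: var ineq} with a backward exponential time average of the test function and using convexity of $v\mapsto\int g_v^p\dmu$. It is then tested with $-\xi\varphi^p(u-k)_+$, built from $u$ itself. The mollification error becomes
\[
\frac{p}{\delta}\int_{\Omega_T}\bigl(u^{p-1}-(u^{p-1})_\delta\bigr)\bigl((u-k)_+-(v_\delta-k)_+\bigr)\xi\varphi^p\dnu\ge0.
\]
This is nonnegative by monotonicity of $s\mapsto(s^{1/(p-1)}-k)_+$ and sits on the left-hand side, so it can be discarded. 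Also, $v_\delta$ appears only inside $G_+(v_\delta,k)$, whose convergence needs only $(u^{p-1})_\delta\to u^{p-1}$ in $L^{p'}$, so no spatial regularity of $v_\delta$ is ever used. Since $u-\xi\varphi^p(u-k)_+=(1-\xi\varphi^p)u+\xi\varphi^p\min\{u,k\}$ is then an honest convex combination, the paper absorbs the gradient term directly by convexity of $t\mapsto t^p$. Your linearization in $\eta$ followed by Young's inequality would also work at that point.
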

\begin{proof}
We may assume that $0\leq \varphi\leq 1$. Since the support of $\varphi$ is compact, after a change of variables, we may assume that $0$ is a Lebesgue point of $u$ in the sense that 
\begin{equation*}
    \lim_{h\to 0}\frac{1}{h}\int_{0}^{h} \int_{\Omega} \lvert u - u(0) \rvert^p\dmu\dt=0.
\end{equation*}
Let $\delta>0$. The mollified form of \eqref{eq: var ineq} reads
\begin{equation}
\label{eq: mollified var ineq}
\begin{split}
    -p\int_{\Omega_T} \partial_t(u^{p-1})_\delta \eta\dnu  + &\int_{\Omega_T} (g_u^p)_\delta \dnu
    \leq \frac{1}{\delta}\int_0^T\int_0^{T-t}\int_{\Omega} g_{u(t)+\eta(t+s)}^p\dmu\ds\dt\\
    &+\int_{\Omega}u(0)^{p-1}\frac{1}{\delta}\int_0^T e^{-\frac{s}{\delta}}\eta(x,s)\ds\dmu   ,
\end{split}
\end{equation}
for every $\eta\in L^p(0,T;N_0^{1,p}(\Omega))$. Let $0<\tau_1<\tau_2<T$ and $\epsilon>0$. Consider a cutoff function
\begin{equation*}
    \xi(t)
        =\begin{cases}
            0 ,&\quad 0<t\leq \tau_1-\epsilon\\
            \frac{1}{\epsilon}(t-\tau_1+\epsilon), & \quad \tau_1-\epsilon <t\leq \tau_1, \\
            1,&\quad \tau_1<t<\tau_2,\\
            \frac{1}{\epsilon}(\tau_2+\epsilon-t), & \quad \tau_2<t\leq \tau_2+\epsilon,\\
            0,&\quad \tau_2 +\epsilon <t<T .
        \end{cases}
\end{equation*}
We test \eqref{eq: mollified var ineq} with $\eta=-\xi\varphi^p  (u-k)_+$. For the term containing the time derivative, we have
\begin{equation*}
\begin{split}
    -p\int_{\Omega_T} \partial_t(u^{p-1})_\delta \eta\dnu & = p\int_{\Omega_T} \partial_t (u^{p-1})_\delta \xi\varphi^p(u-k)_+\dnu \\
    & = p\int_{\Omega_T} \partial_t(u ^{p-1})_\delta \xi\varphi^p( (u^{p-1})_\delta^{\frac{1}{p-1}}-k)_+\dnu \\
    & \qquad + p\int_{\Omega_T}\bigl(\partial_t(u^{p-1})_\delta \xi\varphi^p( (u-k)_+ -(u^{p-1})_\delta^{\frac{1}{p-1}}-k)_+\bigr)\dnu.
\end{split}
\end{equation*}
Using \eqref{eq: time der of mollification} and the fact that $\lambda\mapsto ( \lambda^\frac{1}{p-1}-k)_+$ is a monotonically increasing function, we obtain
\begin{equation*}
\begin{split}
    &p\int_{\Omega_T} \partial_t(u^{p-1})_\delta \xi\varphi^p( (u-k)_+ -(u^{p-1})_\delta^{\frac{1}{p-1}}-k)_+\dnu \\
    &\qquad= p\int_{\Omega_T}\Bigl( \frac{1}{\delta}(u^{p-1}-(u^{p-1})_\delta) \xi\varphi^p( (u-k)_+ -(u^{p-1})_\delta^{\frac{1}{p-1}}-k)_+\Bigr)\dnu
     \geq 0.
\end{split}
\end{equation*}
Thus we have
\begin{equation*}
\begin{split}
    -p\int_{\Omega_T} \partial_t(u^{p-1})_\delta \eta\dnu 
    &\geq p\int_{\Omega_T} \partial_t(u^{p-1})_\delta \xi\varphi^p( (u^{p-1})_\delta^{\frac{1}{p-1}}-k)_+\dnu \\
     & = - p\int_{\Omega_T} \partial_t(\varphi^p \xi) G_+((u^{p-1})_\delta^\frac{1}{p-1},k)\dnu.
\end{split}
\end{equation*}
In the second line we used \eqref{eq: primitive for energy est after change of var} to conclude that 
\begin{equation*}
    \partial_t G_+((u^{p-1})_\delta^\frac{1}{p-1},k)= \partial_t (u^{p-1})_\delta ((u^{p-1})_\delta^\frac{1}{p-1}-k)_+.
\end{equation*}
Using \eqref{eq: primitive for energy est after change of var} and the convergence $(u^{p-1})_\delta\to u^{p-1}$ in $L^{p'}$ as $\delta\to 0$, we also conclude that 
\begin{equation*}
    \lim_{\delta\to 0}\int_{\Omega_T} \partial_t(\varphi^p \xi) G_+((u^{p-1})_\delta^\frac{1}{p-1},k)\dnu = \int_{\Omega_T} \partial_t(\varphi^p \xi) G_+(u,k)\dnu.
\end{equation*}
Applying \Cref{lemma: convergence of time mollification} and the fact that $\eta$ has a compact support in $(0,T)$, we have
\begin{equation*}
\begin{split}
    &\lim_{\delta\to 0} \frac{1}{\delta}\int_0^T\int_0^{T-t}\int_{\Omega} g_{u(t)-\eta(t+s)}^p\dmu\ds\dt+\int_{\Omega} u(0)^{p-1}\frac{1}{\delta}\int_0^T e^{-\frac{s}{\delta}}\eta(x,s)\ds\dmu \\
    &\qquad= \int_{\Omega_T} g_{u-\eta}^p\dnu.
\end{split}
\end{equation*}
By letting $\delta\to 0$ in \eqref{eq: mollified var ineq}, we obtain
\begin{equation*}
\begin{split}
    & -p\int_{\Omega_T} \partial_t(\varphi^p \xi)G_+(u,k)\dnu + \int_{\Omega_T} g_{u}^p\dnu  \leq \int_{\Omega_T} g_{u-\eta}^p\dnu.
\end{split}
\end{equation*}
The product rule implies that
\begin{equation*}
\begin{split}
     &-p\int_{\Omega_T} \partial_t(\varphi^p \xi)G_+(u,k)\dnu + \int_{\Omega_T} g_{(u-k)_+}^p\dnu \leq  \int_{\Omega_T} g_{(1-\xi\varphi^p)(u-k)_+}^p\dnu \\
    &\qquad \leq \int_{\Omega_T}\bigl((1-\xi\varphi^p)g_{(u-k)_+}+\xi p\varphi^{p-1}g_\varphi (u-k)_+ \bigr)^p\dnu.
\end{split}
\end{equation*}
By the convexity of $t\mapsto t^p$, we have
\begin{equation*}
\begin{split}
    &\bigl((1-\xi\varphi^p)g_{(u-k)_+}+\xi p\varphi^{p-1}g_\varphi (u-k)_+ \bigr)^p\\ 
    &\qquad = \bigl((1-\xi\varphi^p)g_{(u-k)_+}+\xi\varphi^{p}\frac{p}{\varphi}g_\varphi (u-k)_+ \bigr)^p \\
    &\qquad \leq (1-\xi \varphi^p)g_{(u-k)_+}^p+\xi p^p g_\varphi^p(u-k)_+,
\end{split}
\end{equation*}
from which it follows that
\begin{equation*}
\begin{split}
    -p\int_{\Omega_T}&  (\partial_t \varphi^p \xi + \partial_t \xi \varphi^p)G_+(u,k)\dnu + \int_{\Omega_T} g_{(u-k)_+}^p\dnu \\
    &\leq \int_{\Omega_T} (1-\xi\varphi^p)g_{(u-k)_+}^p\dnu + p^p \int_{\Omega_T} \xi g_{\varphi}^p (u-k)_+^p\dnu.
\end{split}
\end{equation*}
Absorbing the first term on the right-hand side, we obtain
\begin{equation*}
\label{eq: energy estimate before time der calc}
\begin{split}
    &-p\int_{\Omega_T}  (\partial_t \varphi^p \xi + \partial_t \xi \varphi^p)G_+(u,k)\dnu + \int_{\Omega_T} g_{(u-k)_+}^p\xi\varphi^p\dnu\\
    &\qquad\leq p^p \int_{\Omega_T} g_{\varphi}^p (u-k)_+^p\dnu.
    \end{split}
\end{equation*}
By letting $\epsilon \to 0$, we conclude that
\begin{equation}
\label{eq: energy estimate before choice of times}
\begin{split}
    & p\int_{\Omega} G_+(u(\tau_2),k)\dmu + \int_{\Omega_{\tau_2,\tau_2}} g_{(u-k)_+}^p\varphi^p\dnu 
    \leq p^p \int_{\Omega_T} g_{\varphi}^p (u-k)_+^p\dnu\\
    &\qquad+ p\int_{\Omega_T} g(u,k)\partial_t(\varphi^p)\dnu+ p\int_{\Omega} G_+(u(\tau_1),k)\varphi^p\dmu,
\end{split}
\end{equation}
for almost every $0<\tau_1,\tau_2<T$. Letting $\tau_1\to 0$ and $\tau_2\to T$ in \eqref{eq: energy estimate before choice of times}, we obtain
\begin{equation}
\label{eq: energy estimate without esssup}
    \int_{\Omega_{T}} g_{(u-k)_+}^p\varphi^p\dnu\leq p^p \int_{\Omega_T} g_{\varphi}^p (u-k)_+^p\dnu + p\int_{\Omega_T} g(u,k)(\partial_t\varphi^p)_+\dnu.
\end{equation}
Discarding the second term on the left-hand side of \eqref{eq: energy estimate before choice of times} and letting $\tau_1=t_0$, $t_0<\tau_2<T$, we conclude that
\begin{equation}
\label{eq: esssup term for energy est}
\begin{split}
    & \esssup_{t_0<t<T} p\int_{\Omega} G_+(u(t),k)\dmu 
    \leq p^p \int_{\Omega_T} g_{\varphi}^p (u-k)_+^p\dnu\\
    &\qquad+ p\int_{\Omega_T} g(u,k)\partial_t(\varphi^p)\dnu+ p\int_{\Omega} G_+(u(t_0),k)\varphi^p\dmu.
\end{split}
\end{equation}
By adding up \eqref{eq: energy estimate without esssup} and \eqref{eq: esssup term for energy est}, we arrive at 
\begin{equation*}
\begin{split}
    & \esssup_{t_0<t<T}p\int_{\Omega} G_+(u(t),k)\varphi^p\dmu + \int_{\Omega_{T}}g_{(u-k)_+}^p\varphi^p\dnu \\
    & \qquad\leq c\int_{\Omega_{T}}\bigl((u-k)_+^p g_{\varphi}^p+G_+(u,k)(\partial_t\varphi^p)_+\bigr)\dnu + p\int_{\Omega} G_+ (u(t_0),k)\varphi^p\dmu.
\end{split}
\end{equation*}
This completes the proof.
\end{proof}

\section{Doubling and Poincar\'e imply parabolic Harnack}

In this section we prove that if $\mu$ is doubling and a $(1,p)$-Poincar\'e inequality holds in $(X,d,\mu)$, then nonnegative solutions of \eqref{eq: PDE} satisfy a scale and location invariant parabolic Harnack inequality.
For $r>0$, $x_0\in X$, $t_0\in \R$ and $\delta>0$, we denote 
\begin{equation*}
    Q^-_{r,\delta}(x_0,t_0)=B(x_0,r)\times(t_0-\delta r^p,t_0).   
\end{equation*}
We also denote $Q^-_{r}(x_0,t_0)=Q^-_{r,1}(x_0,t_0)$.
\label{section: VD + PI implies PHI}
\subsection{Local boundedness}
We begin by showing that subsolutions are locally bounded, by using the De Giorgi method. 
\begin{proposition}
\label{prop: loc bdd of subsol}
Assume that $\mu$ is doubling and that a $(1,p)$-Poincar\'e inequality holds in $(X,d,\mu)$.
Let $u$ be a subsolution of \eqref{eq: PDE} in $\Omega_T$. Then there exists a constant c, depending on $c_\mu$, $c_P$ and $p$, such that
\begin{equation*}
    \esssup_{Q^-_{\frac r2}(x_0,t_0)} u \leq c\biggl(  \dashint_{Q^-_r(x_0,t_0)} u_+^p\dnu\biggr)^\frac{1}{p}
\end{equation*}
for every $Q_r^-(x_0,t_0)\subset\Omega_T$ with $0<r\leq\frac13\diam(X)$.
\end{proposition}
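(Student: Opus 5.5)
We prove the estimate by a De Giorgi iteration over shrinking cylinders and increasing levels, combining the energy estimate of \Cref{lemma: caccioppoli with cutoff} with the parabolic Sobolev embedding of \Cref{lemma: par sob}. The nonlinearity in time enters only through $G_+$, and the two-sided bound $G_+(w,k)\approx(\lvert w\rvert+\lvert k\rvert)^{p-2}(w-k)_+^2$ from the quoted lemma of B\"ogelein--Duzaar--Liao converts the $G_+$ terms into $L^p$ quantities of $(u-k)_+$.

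\textbf{Step 1 (Setup).} By the scaling $u\mapsto \lambda u$, which preserves (sub)solutions since the equation is $(p-1)$-homogeneous in both terms, it suffices to show the following: if $Y_0:=\dashint_{Q^-_r} u_+^p\dnu\le \epsilon_0$ for a small $\epsilon_0=\epsilon_0(c_\mu,c_P,p)$, then $u\le 1$ on $Q^-_{r/2}$. Set
\[
r_j=\tfrac r2(1+2^{-j}),\qquad k_j=1-2^{-j},\qquad Q_j=Q^-_{r_j}(x_0,t_0),\qquad Y_j=\dashint_{Q_j}(u-k_j)_+^p\dnu .
\]
Choose Lipschitz cutoffs $\varphi_j$ equal to $1$ on $Q_{j+1}$ and supported in $Q_j$ (vanishing near the bottom time and near $\partial B(x_0,r_j)$), with $g_{\varphi_j}\le c2^j/r$ and $(\partial_t\varphi_j^p)_+\le c2^{jp}/r^p$. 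Apply \Cref{lemma: caccioppoli with cutoff} with $k=k_{j+1}$ and $t_0$ below the support of $\varphi_j$, so that the initial term vanishes.

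\textbf{Step 2 (Converting the energy estimate).} Since $k_{j+1}\ge 1/2$, on the set $\{u>k_{j+1}\}$ we have $\lvert u\rvert+k_{j+1}\approx u$, and the comparability constants depend only on $p$. For $p\ge2$ this gives $G_+(u,k)\ge c(u-k)_+^p$. For $p<2$ we use $(u-k)_+\le u$ to get $G_+(u,k)\ge c\,u^{p-2}(u-k)_+^2\ge c(u-k)_+^p$, which is the right direction for the sup term. For the upper bound of $G_+$ on the right-hand side we use $(u-k_{j+1})_+\le (u-k_j)_+$ and, on $\{u>k_{j+1}\}$,
\[
u\le (u-k_j)_+ + 1\le c\,2^{j}(u-k_j)_+ ,
\]
since $(u-k_j)_+\ge 2^{-j-1}$ there. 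This yields $G_+(u,k_{j+1})\le c\,2^{jc}(u-k_j)_+^p$. The energy estimate then becomes
\[
\esssup_t\dashint_{B_{j+1}}(u-k_{j+1})_+^p\dmu+r^p\dashint_{Q_{j+1}}g_{(u-k_{j+1})_+}^p\dnu\le c\,b^j Y_j .
\]

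\textbf{Step 3 (Sobolev and measure-smallness).} Apply \Cref{lemma: par sob} to $(u-k_{j+1})_+\varphi_{j+1}$ on $B(x_0,r_j)$; this is legitimate since $r\le\frac13\diam X$, and doubling lets us compare the averages over balls of comparable radii. This gives
\[
\dashint_{Q_{j+1}}(u-k_{j+1})_+^{p\kappa}\dnu\le c\,b^{j\kappa}Y_j^{\kappa}.
\]
Combine this with H\"older's inequality and the Chebyshev bound $\nu(\{u>k_{j+1}\}\cap Q_j)/\nu(Q_j)\le 2^{(j+1)p}Y_j$. The result is the recursion
\[
Y_{j+1}\le C b^j Y_j^{1+\frac{\kappa-1}{\kappa}\cdot\ldots}
\]
with a positive excess exponent. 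The standard fast geometric convergence lemma then gives $Y_j\to0$ when $Y_0$ is small. Rescaling by $\lambda=(\epsilon_0^{-1}Y_0)^{-1/p}$ yields the claim.

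\textbf{Main obstacle.} The main obstacle is Step 2. For $p<2$ the factor $(\lvert u\rvert+k)^{p-2}$ could be large on the set where the solution is near $0$. Keeping the levels $k_j\ge 1/2$, as arranged above, makes all weights comparable to powers of $2^j$, and this geometric loss is harmless for the iteration. A secondary technicality is ensuring that the cutoffs provide zero lateral boundary values so that \Cref{lemma: par sob} applies.
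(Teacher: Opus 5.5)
There is a genuine gap in Step 2 for $1<p<2$. The inequality $u^{p-2}(u-k)_+^2\ge c(u-k)_+^p$ goes the wrong way. Since $p-2<0$ and $(u-k)_+\le u$, you get $u^{p-2}\le (u-k)_+^{p-2}$, and hence $u^{p-2}(u-k)_+^2\le (u-k)_+^p$. In fact no bound $G_+(u,k)\ge c(u-k)_+^p$ can hold for $p<2$. For $u=k+\varepsilon$ one has $G_+(u,k)\approx k^{p-2}\varepsilon^2\ll\varepsilon^p$ as $\varepsilon\to0$, and keeping $k\ge\frac12$ does not change this. So if the energy estimate is taken at the same level $k_{j+1}$ as the function $(u-k_{j+1})_+\varphi_j$ that you feed into \Cref{lemma: par sob}, the factor $\esssup_t\dashint(u-k_{j+1})_+^p\varphi_j^p\dmu$ is not controlled. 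Your ``main obstacle'' paragraph also misplaces the difficulty. On $\{u>k\}$ with $k>0$, the weight $(\lvert u\rvert+k)^{p-2}$ is never large near $u=0$. The problem is that it is too small compared with $(u-k)_+^{p-2}$ when $u$ lies just above the level.

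The repair is to put a level gap into the sup term, mirroring the gap you already used on the right-hand side for $p\ge2$. This is exactly the paper's case split: the energy estimate is taken at $k_n$ for $p<2$ and at $\tilde k_n$ for $p\ge2$. For $p<2$, apply \Cref{lemma: caccioppoli with cutoff} at the lower level $k_j$. Its right-hand side is harmless, since $G_+(u,k_j)\le c(\lvert u\rvert+k_j)^{p-2}(u-k_j)_+^2\le c(u-k_j)_+^p$ when $p<2$. The gradient term is also fine, because $g_{(u-k_{j+1})_+}\le g_{(u-k_j)_+}$. On $\{u>k_{j+1}\}$ (and trivially elsewhere), your own bound $u\le c2^j(u-k_j)_+$ gives
\[
(u-k_{j+1})_+^p\le (u-k_j)_+^{p-2}(u-k_j)_+^2\le c\,2^{j(2-p)}u^{p-2}(u-k_j)_+^2\le c\,2^{j(2-p)}G_+(u,k_j),
\]
which is the control of the sup term at level $k_{j+1}$ that you need. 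Make this modification, and apply \Cref{lemma: par sob} to $(u-k_{j+1})_+\varphi_j$ rather than $(u-k_{j+1})_+\varphi_{j+1}$. Then the rest of your argument goes through and is essentially the paper's proof: the normalization by scaling, H\"older plus Chebyshev in place of the paper's factor $(k_{n+1}-\tilde k_n)^{p(1-\kappa)}$, and the fast geometric convergence lemma. Your treatment of the case $p\ge2$ is correct as written.
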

\begin{proof}
For $n\in \mathbb N$ and $k>0$, let
\begin{equation*}
k_n = (1-2^{-n})k, \quad \Tilde k_n=\frac{k_{n+1}+k_n}{2}, 
\quad r_n = \frac{r}{2} + \frac{r}{2^{n+1}},
\end{equation*}
$B_n =B(x_0,r_n)$ and $Q_n= Q^-_{r_n}(x_0,t_0)$.
Let $\eta_n$ be a $\frac{c}{r_{n}-r_{n+1}}$-Lipschitz cutoff function with $0\leq \eta\leq 1$, $\eta=1$ on $B_{n+1}$ and $\supp \eta \subset B_n$. Let
\begin{equation*}
    \xi_n(t)
        =\begin{cases}
            0 ,&\quad t\leq t_0-r_{n}^p,\\
            \frac{1}{r_n^p-r_{n+1}^p}(t-t_0+r_{n}^p), & \quad t_0-r_n^p <t\leq t_0-r_{n+1}^p, \\
            1,&\quad t\geq t_0-r_{n+1}^p,
        \end{cases}
\end{equation*}
and denote $\varphi_n=\eta_n\xi_n$. By \Cref{lemma: par sob} we have
\begin{equation}
\label{eq: loc bdd from par sob}
\begin{split}
    &\dashint_{Q_{n+1}}(u-k_{n+1})_+^p\dnu\\
    &\qquad= (k_{n+1}-\Tilde k_n)^{p(1-\kappa)} \dashint_{Q_{n+1}}(u-k_{n+1})_+^p(k_{n+1}-\Tilde k_n)^{p(\kappa-1)}\dnu \\
    &\qquad\leq ck^{1-\kappa}2^{np(\kappa-1)}\dashint_{Q_n}((u-\Tilde k_n)_+\varphi_n)^{p\kappa}\dnu \\
    &\qquad\leq ck^{1-\kappa}2^{np(\kappa-1)} r_n^p \dashint_{Q_n}g_{(u-\Tilde k_n)_+\varphi_n}^p\dnu
     \esssup_{t_0-r_n^p<t<t_0}\biggl(\dashint_{B_n}(u(t)-\Tilde k_n)_+^p \varphi_n^p\dmu\biggr)^{\kappa-1},
\end{split}
\end{equation}
where $\kappa>1$ is as in \Cref{lemma: par sob}.

We claim that 
\begin{equation}
\label{eq: wanted est for loc bdd from cacciop}
    \begin{split}
     &\esssup_{t_0-r_n^p<t<t_0}\dashint_{B_n} (u(t)-\Tilde k_n)_+^p \varphi_n^p\dmu + r_n^p\dashint_{Q_n}g_{(u-\Tilde k_n)_+ \varphi_n }^p\dnu\\
     &\qquad\leq c  2^{(2p+2)n}\dashint_{Q_n}(u-k_n)^p\dnu.
     \end{split}
\end{equation}
Let $h>0$. Using \Cref{lemma: caccioppoli with cutoff} we have
\begin{equation}
\label{eq: general est for loc bdd from cacciop}
\begin{split}
    & \esssup_{t_0-r_n^p<t<t_0}\dashint_{B_n} (u(t) +h)^{p-2}(u(t)-h)_+^2 \varphi_n^p\dmu + r_n^p\dashint_{Q_n}g_{(u-h)_+}^p\varphi_n^p\dnu \\
    &\qquad \leq c 2^{pn}\dashint_{Q_n}(u-h)_+^p +( u_+ +h )^{p-2}(u-h)_+^2\dnu.
\end{split}
\end{equation}
We consider two ranges of $p$ separately. First, assume $1<p<2$. Then
\begin{equation*}
\begin{split}
    (u-\Tilde k_n)_+^p & \leq (u+k_n)^p \chi_{u>\Tilde k_n} 
     =(u+k_n)^{p-2} ((u-k_n)_+ + k_n)^2\chi_{u>\Tilde k_n} \\
    &\leq c  4^{n} (u+k_n)^{p-2} ((u-k_n)_+ + (\Tilde k_n-k_n))^2\chi_{u>\Tilde k_n} \\
    &\leq c  4^{n} (u+k_n)^{p-2} (u-k_n)_+^2
    \leq c 4^n (u-k_n)_+^p.
\end{split}
\end{equation*}
Thus, the product rule and \eqref{eq: general est for loc bdd from cacciop} with $h=k_n$ gives
\begin{equation*}
\begin{split}
     &\esssup_{t_0-r_n^p<t<t_0}\dashint_{B_n} (u(t)-\Tilde k_n)_+^p \varphi_n^p\dmu + r_n^p\dashint_{Q_n}g_{(u-\Tilde k_n)_+\varphi_n}^p\dnu \\
     &\qquad\leq c 2^{2n}\esssup_{t_0-r_n^p<t<t_0}\dashint_{B_n} (u(t)+k_n)^{p-2}(u(t)-k_n)_+^2 \varphi_n^p\dmu\\
     &\qquad\qquad+ c r_n^p \dashint_{Q_n}g_{(u-k_n)_+}^p\varphi_n^p\dnu
      + c r_n^p\dashint_{Q_n} (u-\Tilde k_n)_+^p g_{\varphi_n}^p\dnu \\
     &\qquad \leq c 2^{(p+2)n}\dashint_{Q_n}(u-k_n)_+^p\dnu,
\end{split}
\end{equation*}
which shows \eqref{eq: wanted est for loc bdd from cacciop}. 
Then assume $2\leq p<\infty$. In this case
\begin{equation*}
\begin{split}
    (u-\Tilde k_n)_+^p &\leq (u+\Tilde k_n)^{p-2}(u-\Tilde k_n)_+^2 \leq c 2^{(p-2)n}(u-k_n)_+^p.
\end{split}
\end{equation*}
Applying \eqref{eq: general est for loc bdd from cacciop} with $h=\Tilde k_n$ implies 
\begin{equation*}
\begin{split}
     &\esssup_{t_0-r_n^p<t<t_0}\dashint_{B_n} (u(t)-\Tilde k_n)_+^p \varphi_n^p\dmu + r_n^p\dashint_{Q_n}g_{(u-\Tilde k_n)_+\varphi_n^p}^p\dnu \\
     &\qquad \leq \esssup_{t_0-r_n^p<t<t_0}\dashint_{B_n} (u(t)+\Tilde k_n)^{p-2}(u(t)-\Tilde k_n)_+^2 \varphi_n^p\dmu \\
     &\qquad\qquad+ cr_n^p\dashint_{Q_n}g_{(u-\Tilde k_n)_+}^p\varphi_n^p\dnu
     + cr_n^p\dashint_{Q_n}(u-\Tilde k_n)_+^pg_{\varphi_n}^p\dnu \\
     &\qquad\leq c 2^{(2p-2)n}\dashint_{Q_n}(u-k_n)_+^p\dnu,
\end{split}
\end{equation*}
and hence \eqref{eq: wanted est for loc bdd from cacciop} holds in this case as well. 

We apply \eqref{eq: wanted est for loc bdd from cacciop} in \eqref{eq: loc bdd from par sob} to conclude 
\begin{equation*}
    \dashint_{Q_{n+1}}(u-k_{n+1})_+^p\dnu \leq ck^{p(\kappa-1)}2^{(p(\kappa-1)+\kappa(2p+2))n}\biggl(\dashint_{Q_n} (u-k_n)_+^p\dnu \biggr)^{\kappa}.
\end{equation*}
Denoting 
\begin{equation*}
    Y_n= \biggl(\dashint_{Q_n}(u-k_n)_+^p\dnu\biggr)^\frac{1}{p} 
    \quad\text{and}\quad 
    b=2^{(\kappa-1)+\kappa(2+\frac2p)},
\end{equation*}
we have 
\begin{equation*}
    Y_{n+1}\leq ck^{1-\kappa}b^n Y_n^{\kappa}
\end{equation*}
for every $n\in\mathbb N$.
Applying \cite[Lemma 5.1]{DiBenedettoGIanazzaVespri2012} we conclude that $Y_n\to 0$ as $n\to \infty$ if 
\begin{equation*}
    Y_0 \leq c^{1-\kappa} b^{-(\kappa-1)^2} k.
\end{equation*}
We have
\begin{equation*}
    \dashint_{Q^-_{\frac r2}(x_0,t_0)} (u-k)_+ \dnu \leq c\dashint_{Q^-_{r_n}(x_0,t_0)} (u-k)_+ \dnu \leq cY_n,
\end{equation*}
for every $n\in\mathbb N$. Hence, for $k=cY_0$, with $c$ large enough, we have 
\begin{equation*}
    \dashint_{Q^-_{\frac r2}(x_0,t_0)} (u-k)_+ \dnu = 0.
\end{equation*}
It follows that 
\begin{equation*}
    \esssup_{Q_{\frac r2}^-(x_0,t_0)} u \leq k = c \biggl(\dashint_{Q_r^{-}(x_0,t_0)}u_+^p\dnu\biggr)^\frac{1}{p}.
\end{equation*}
\end{proof}

\subsection{Propagation of positivity in measure}
The following lemma shows that the density of upper level sets propagates in time. 
\begin{lemma}
\label{lemma: propagation of pos in meas}
Assume that $\mu$ is doubling and that a $(1,p)$-Poincar\'e inequality holds in $(X,d,\mu)$. Let $u$ be a nonnegative supersolution in $\Omega_T$, $k>0$, $0<\alpha<1$ and $\sigma >1$. Then there exist constants $0<\delta_0<1$ and $0<\epsilon<1$, depending on $c_\mu$, $p$, $\alpha$ and $\sigma$, such that if $B(x_0,\sigma r)\times[t_0,t_0+\delta r^p)\subset\Omega_T$, $0<\delta\leq \delta_0$, and $t_0$ is a Lebesgue point of $u$ with 
\begin{equation*}
    \mu(\{u(t_0)\geq k\}\cap B(x_0,r)) \geq \alpha \mu(B(x_0,r)),
\end{equation*}
then 
\begin{equation*}
    \mu( \{u(t)\geq \epsilon k\}\cap B(x_0,r))) \geq \frac{\alpha}{2} \mu(B(x_0,r))
\end{equation*}
for almost every $t_0<t<t_0+\delta r^p$.
\end{lemma}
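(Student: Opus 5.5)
Since $u$ is a supersolution, I would apply the energy estimate of \Cref{lemma: caccioppoli with cutoff} to the lower truncation $(u-k)_-$. The symmetric version for supersolutions follows from the same proof, testing \eqref{eq: mollified var ineq} with $\eta=\xi\varphi^p(u-k)_-$ and using $G_-$ in place of $G_+$. I take a purely spatial cutoff $\varphi=\varphi(x)$, Lipschitz, with $0\le\varphi\le1$, $\varphi=1$ on $B(x_0,(1-\theta)r)$, $\supp\varphi\subset B(x_0,r)$ and $g_\varphi\le 1/(\theta r)$, where $\theta\in(0,1)$ will be chosen small. Then $\partial_t\varphi^p=0$, and since $t_0$ is a Lebesgue point the initial term can be evaluated at $t_0$. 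For almost every $t\in(t_0,t_0+\delta r^p)$ this gives
\[
\int_{B(x_0,r)}G_-(u(t),k)\varphi^p\dmu\le \int_{B(x_0,r)}G_-(u(t_0),k)\varphi^p\dmu+\frac{c}{(\theta r)^p}\int_{t_0}^{t_0+\delta r^p}\int_{B(x_0,r)}(u-k)_-^p\dmu\ds .
\]
Because $u\ge0$ we have $(u-k)_-\le k$, so the last term is at most $c\delta\theta^{-p}k^p\mu(B(x_0,r))$. Here the enlarged ball $B(x_0,\sigma r)$ only serves to guarantee that the cutoff support lies inside $\Omega$, so $\sigma$ plays a minor role.

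Next I compare $G_-$ with powers using the two-sided bound $G_-(w,k)\approx(|w|+|k|)^{p-2}(w-k)_-^2$. For $0\le w\le k$ this gives $G_-(w,k)\approx k^{p-2}(k-w)^2$ for every $1<p<\infty$, since $w+k\in[k,2k]$. In particular $G_-(w,k)\le ck^p$ everywhere, and $G_-(u(t_0),k)=0$ on $\{u(t_0)\ge k\}$. Hence the first term on the right is at most $ck^p(1-\alpha)\mu(B(x_0,r))$. On the set $\{u(t)<\epsilon k\}$ we have $G_-(u(t),k)\ge c^{-1}k^p(1-\epsilon)^2$. To pin down the leading constant I intend to use the explicit integral formula for $G_-$ rather than the comparison bound: as $\epsilon\to0$ one has $G_-(w,k)\ge (1-\omega(\epsilon))G_-(0,k)$ for $w\le\epsilon k$, and the first term is at most $G_-(0,k)\,\mu(\{u(t_0)<k\}\cap B(x_0,r))$. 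The point is to compare against the same quantity $G_-(0,k)=:\Gamma k^p$ on both sides.

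Combining these bounds, for almost every such $t$,
\[
(1-\omega(\epsilon))\,\mu\bigl(\{u(t)<\epsilon k\}\cap B(x_0,(1-\theta)r)\bigr)\le (1-\alpha)\mu(B(x_0,r))+c\delta\theta^{-p}\mu(B(x_0,r)).
\]
By the annular decay \eqref{eq: geodesic doubling becomes annular decay}, or simply by doubling with $\theta$ small, $\mu(B(x_0,r)\setminus B(x_0,(1-\theta)r))\le c\theta^\beta\mu(B(x_0,r))$. Therefore
\[
\mu(\{u(t)<\epsilon k\}\cap B(x_0,r))\le\Bigl(\frac{1-\alpha}{1-\omega(\epsilon)}+c\delta\theta^{-p}+c\theta^\beta\Bigr)\mu(B(x_0,r)).
\]
I then choose the parameters in order: first $\theta$ with $c\theta^\beta\le\alpha/8$, then $\epsilon$ with $\frac{1-\alpha}{1-\omega(\epsilon)}\le 1-\alpha+\alpha/4$, and finally $\delta_0$ with $c\delta_0\theta^{-p}\le\alpha/8$. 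With these choices the right-hand side is at most $1-\alpha/2$, which is the claim.

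The main obstacle is the sharp constant in the comparison step. The two-sided bound on $G_-$ loses a multiplicative constant $c(p)$, but the argument needs the leading coefficient to match exactly, so that $1-\alpha$ is not inflated past $1-\alpha/2$. I will handle this with the exact monotone formula \eqref{eq: primitive for energy est after change of var}: $w\mapsto G_-(w,k)$ is decreasing on $[0,k]$ and continuous, so $G_-(w,k)\ge G_-(\epsilon k,k)\to G_-(0,k)$ as $\epsilon\to0$. A secondary technical point is justifying the supersolution version of the energy estimate for $(u-k)_-$ with initial time $t_0$. This needs the Lebesgue-point assumption and mirrors the proof of \Cref{lemma: caccioppoli with cutoff}.
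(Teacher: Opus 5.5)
Your argument is correct and is essentially the paper's proof. Both use the energy estimate for $(u-k)_-$ with a spatial cutoff, bound the lower-order term by $c\delta\theta^{-p}k^p\mu(B(x_0,r))$, and compare $G_-$ against the same quantity $G_-(0,k)$ on both sides; the paper does this by splitting $(p-1)\int_0^k s^{p-2}(k-s)\,ds$ into the part over $(\epsilon k,k)$ plus $c(\epsilon k)^p$. The one difference is where the cutoff sits: you put it inside $B(x_0,r)$ and pay for the annulus at time $t$ on the left, whereas the paper enlarges to $B(x_0,(1+\eta)r)$ with $1+\eta<\sigma$ and pays for the annulus in the initial-time term. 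In both versions the uniform annulus bound is the annular decay \eqref{eq: geodesic doubling becomes annular decay}, which relies on $X$ being geodesic, so your aside that doubling alone would suffice is not accurate.
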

\begin{proof}
Let $\eta>0$ with $1+\eta<\sigma$. Applying \Cref{lemma: caccioppoli with cutoff} we have
\begin{equation}
\label{eq: first est in prop of pos}
\begin{split}
    &\int_{B(x_0,r)} G_{-}(u(t),k)\dmu 
    \leq \int_{B(x_0,{(1+\eta)r})}G_{-}(u(t_0),k)\dmu\\
    &\qquad+ \frac{c}{\eta^p r^p}\int_{B(x_0,(1+\eta)r)\times (t_0, t_0 + \delta r^p)} (u-k)_-^p\dnu.
\end{split}
\end{equation}
To estimate the first term on the right-hand side of the above, we split the integral and obtain
\begin{equation*}
\begin{split}
    & \int_{B(x_0,{(1+\eta)r})}G_{-}(u(t_0),k)\dmu 
     \leq (p-1) k^p \mu(B(x_0,(1+\eta)r)\backslash \mu(B(x_0,r))) \\
    & \qquad\qquad + (p-1)\mu(\{u(t_0)<k\}\cap B(x_0,r))\int_{0}^{k} s^{p-2}(k-s)\ds \\
    & \qquad\leq c k^p \eta^\beta \mu(B(x_0,r)) 
    + (p-1)(1-\alpha)\mu(B(x_0,r))\int_{0}^{k} s^{p-2}(k-s)\ds .
\end{split}
\end{equation*}
Here, we used \eqref{eq: geodesic doubling becomes annular decay} in the second inequality. We note that
\begin{equation*}
    (p-1)\int_{0}^{k} s^{p-2}(k-s)\ds \leq (p-1) \int_{\epsilon k}^{k} s^{p-2}(k-s)\ds + c(\epsilon k)^p.
\end{equation*}
To estimate the second term on the right-hand side of \eqref{eq: first est in prop of pos} we have
\begin{equation*}
    \frac{c}{\eta^p r^p}\int_{B(x_0,(1+\eta)r)\times (t_0, t_0 + \delta r^p)} (u-k)_-^p\dnu \leq \frac{c \delta_0}{\eta^p r^p} k^p \mu(B(x_0,r)).
\end{equation*}
Finally, we estimate the left hand side of \eqref{eq: first est in prop of pos} from below with 
\begin{equation*}
    \int_{B(x_0,r)} G_{-}(u(t),k)\dmu 
    \geq (p-1) \mu(\{u(t)<\epsilon k\}\cap B(x_0,r)))\int_{\epsilon k}^k s^{p-2}(k-s)\ds 
\end{equation*}
Combining the estimates we have obtained thus far we have
\begin{equation*}
    \mu(\{u(t)<\epsilon k\}\cap B(x_0,r))) 
    \leq \mu(B(x_0,r)) \Bigl( (1-\alpha) + c\epsilon^p+ c \eta^\beta + \frac{c \delta_0}{\eta^p} \Bigr).
\end{equation*}
Choosing $\epsilon$, $\eta$, and finally $\delta_0$ to be small enough, the previous estimate implies 
\begin{equation*}
    \mu(\{u(t)<\epsilon k\}\cap B(x_0,r))) \leq \Bigl(1-\frac{\alpha}{2}\Bigr)\mu(B(x_0,r)).
\end{equation*}
\end{proof}
Next, we show that the density of upper level sets can be improved.
\begin{lemma}
\label{lemma: shrinking lemma short times}
Assume that $\mu$ is doubling and that a $(1,p)$-Poincar\'e inequality holds in $(X,d,\mu)$.  Let $u$ be a nonnegative supersolution in $\Omega_T$, $k>0$, $0<\alpha<1$, and $\sigma>1$. Let $\delta_0$ and $\epsilon$ be as in \Cref{lemma: propagation of pos in meas} and let $0<\delta\leq \delta_0$. Then there exist constants $c$ and $\theta$, depending on $\alpha$, $\sigma$, $p$, $c_\mu$, $c_P$ and $\delta$, such that if $B(x_0,\sigma r)\times[t_0,t_0+\delta r^p)\subset \Omega_T$ and $t_0$ is a Lebesgue point of $u$ with 
\begin{equation*}
    \mu(\{u(t_0) > k \}\cap B(x_0,r))) \geq \alpha \mu(B(x_0,r)),
\end{equation*}
then
\begin{equation*}
    \nu\Bigl(\Bigl\{ u < \frac{\epsilon k}{2^j}\Bigr\}\cap Q\Bigr) \leq \frac{c}{j^\theta} \nu(Q), 
\end{equation*}
for every $j\in \mathbb N$, where  $Q=B(x_0,r)\times (t_0,t_0+\delta r^p)$.
\end{lemma}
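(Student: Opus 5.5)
The plan is the standard De Giorgi shrinking argument (in the spirit of DiBenedetto), applied slice by slice in time and then summed in $j$.

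\textbf{Step 1: positivity on every time slice.} By \Cref{lemma: propagation of pos in meas} (the hypothesis with $>k$ implies the one with $\geq k$), for almost every $t\in(t_0,t_0+\delta r^p)$ we have
\[
\mu(\{u(t)\geq \epsilon k\}\cap B(x_0,r))\geq \tfrac{\alpha}{2}\mu(B(x_0,r)).
\]
Set $k_j=\epsilon k 2^{-j}$ and $A_j=\{u<k_j\}\cap Q$. Then for a.e.\ $t$ the function $(k_j-u(t))_+$ vanishes on a set of proportion at least $\alpha/2$ in $B(x_0,r)$.

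\textbf{Step 2: the discrete isoperimetric estimate.} For each slice I apply a De Giorgi-type inequality between consecutive levels. Consider $w=\min\{(k_j-u)_+,\,k_j-k_{j+1}\}=\min\{(k_j-u)_+,\,k_{j+1}\}$. It equals $k_{j+1}$ on $\{u<k_{j+1}\}$ and vanishes on $\{u\geq\epsilon k\}$. Apply \Cref{eq: Poincare for function zero on large set} with $q=1$; if $q>1$ is needed, use the $(1,1)$ form via the Poincar\'e inequality together with the median argument, since $w$ vanishes on a fraction $\alpha/2$. This gives
\[
k_{j+1}\,\mu(\{u(t)<k_{j+1}\}\cap B)\le c\, r\int_{B(x_0,\tau r)\cap\{k_{j+1}<u(t)<k_j\}} g_{u(t)}\dmu .
\]
Two points need care here. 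The level set sits in $B(x_0,\tau r)$ and not in $B(x_0,r)$, so I either use $\tau=1$ (allowed in a geodesic space) or run the argument in $B(x_0,r)$ with the Poincar\'e inequality with $\tau=1$. I then integrate in $t$ over $(t_0,t_0+\delta r^p)$ and apply H\"older:
\[
k_{j+1}\nu(A_{j+1})\le c r\Bigl(\int_{Q'} g_{(u-k_j)_-}^p\dnu\Bigr)^{1/p}\nu(A_j\setminus A_{j+1})^{1-1/p},
\]
where $Q'$ is $Q$ slightly enlarged in space. This is why $\sigma>1$ is available.

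\textbf{Step 3: Caccioppoli bound on the gradient.} Use \Cref{lemma: caccioppoli with cutoff} in its supersolution version for $(u-k_j)_-$. Take a cutoff $\varphi$ with $\varphi=1$ on $B(x_0,r)$ and support in $B(x_0,\sigma r)$, independent of $t$, started at the Lebesgue point $t_0$. Since $u\ge0$, we have $(u-k_j)_-\le k_j$, and
\[
G_-(u(t_0),k_j)\le c\,k_j^p .
\]
Therefore
\[
\int_{Q'} g_{(u-k_j)_-}^p\dnu\le c\, k_j^p\Bigl(\frac{1}{(\sigma-1)^p r^p}\nu(Q)+\mu(B(x_0,r))\Bigr)\le \frac{c(\delta,\sigma)}{r^p}k_j^p\,\nu(Q).
\]
The initial term contributes $k_j^p\mu(B)=k_j^p\nu(Q)/(\delta r^p)$, which is where the dependence on $\delta$ enters. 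Substituting into Step 2 and using $k_j=2k_{j+1}$ gives
\[
\nu(A_{j+1})^{p/(p-1)}\le c\,\nu(Q)^{1/(p-1)}\nu(A_j\setminus A_{j+1}).
\]

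\textbf{Step 4: summation.} Sum over $j=1,\dots,J$. The sets $A_j\setminus A_{j+1}$ are disjoint and $\nu(A_J)\le\nu(A_{j+1})$, so
\[
J\,\nu(A_J)^{p/(p-1)}\le c\,\nu(Q)^{p/(p-1)}.
\]
This yields $\nu(A_J)\le cJ^{-(p-1)/p}\nu(Q)$, that is, $\theta=(p-1)/p$.

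\textbf{Main obstacle.} I expect the main obstacle to be Step 2: obtaining a clean $L^1$ De Giorgi inequality on a single ball, with the same ball on both sides, from the $(1,p)$-Poincar\'e inequality with $\tau=1$. I would justify it using the $\tau=1$ reduction mentioned after \Cref{def: poincare} combined with the argument of \Cref{eq: Poincare for function zero on large set}, giving $\int_B|w|\le c r\int_B g_w$ via Keith--Zhong or simply the $(1,1)$ case. If only a $(1,p)$ version is available, the H\"older step can be done directly with $\int_B w\le c r\mu(B)^{1-1/p}(\int_B g_w^p)^{1/p}$, applied to $w$ whose gradient is supported in $\{k_{j+1}<u<k_j\}$. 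This still gives the $\nu(A_j\setminus A_{j+1})^{1-1/p}$ factor, since $g_w=0$ off that set.
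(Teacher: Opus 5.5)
\textbf{There is a genuine gap in Step 2.} Your slice estimate $k_{j+1}\,\mu(\{u(t)<k_{j+1}\}\cap B)\le c\,r\int_{B\cap\{k_{j+1}<u(t)<k_j\}} g_{u(t)}\dmu$ is a De Giorgi isoperimetric inequality of $(1,1)$ type. A $(1,1)$-Poincar\'e inequality is not implied by the $(1,p)$-Poincar\'e inequality you are given. Keith--Zhong only improves $(1,p)$ to $(1,q)$ for some $q<p$ depending on $c_\mu$, $c_P$ and $p$, and this $q$ may be arbitrarily close to $p$.

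Your fallback with the $(1,p)$ inequality itself does not fix this. There you get $\int_B w\dmu\le c\,r\,\mu(B)^{1-1/p}\bigl(\int_B g_w^p\dmu\bigr)^{1/p}$. The factor $\mu(B)^{1-1/p}$ is the measure of the whole ball and comes from the normalization of the averages. The fact that $g_w$ vanishes off $\{k_{j+1}<u<k_j\}$ only restricts where the gradient integral lives; it yields no power of $\nu(A_j\setminus A_{j+1})$. After integrating in time and inserting the Caccioppoli bound of Step 3, you are left with $k_{j+1}\nu(A_{j+1})\le c\,k_j\,\nu(Q)$, that is, $\nu(A_{j+1})\le c\,\nu(Q)$. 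This carries no information, and the summation in Step 4 has nothing to sum.

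\textbf{The missing idea} is to gain integrability on the gradient side before applying H\"older. Take the $(1,q)$-Poincar\'e inequality with $1\le q<p$ from Keith--Zhong, which is recorded in the preliminaries. Apply \Cref{eq: Poincare for function zero on large set} on each slice with exponent $q$ on the right-hand side, integrate in time, and only then pass from $q$ to $p$:
\[
\biggl(\int_Q g_w^q\dnu\biggr)^{\frac1q}\le\biggl(\int_Q g_{(u-k_j)_-}^p\dnu\biggr)^{\frac1p}\nu(\{k_{j+1}<u<k_j\}\cap Q)^{\frac1q-\frac1p}.
\]
The strictly positive exponent $\frac1q-\frac1p$ is what produces the decay. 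Combined with your Step 3 it gives $\nu(A_{j+1})^{\frac{pq}{p-q}}\le c\,\nu(Q)^{\frac{pq}{p-q}-1}\nu(\{k_{j+1}<u<k_j\}\cap Q)$. Summing over the disjoint strips then yields the claim with $\theta=\frac{p-q}{pq}$, not $\frac{p-1}{p}$.

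Apart from this, your argument is the paper's. Your $w$ equals $\max\{u,k_j\}-\max\{u,k_{j+1}\}$. Step 1 is the same use of \Cref{lemma: propagation of pos in meas}. Your handling of the initial term $G_-(u(t_0),k_j)$ and the resulting dependence on $\delta$ is correct.
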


\begin{proof}
Let $k_i=\frac{\epsilon}{2^i}k$, where $i\in \{0,\dots,j\}$. Denote $v=\max\{u,k_i\}-\max\{u,k_{i+1}\}$. Applying \Cref{lemma: propagation of pos in meas} we find that 
\begin{equation*}
    \mu(\{v(t) > 0\}\cap B(x_0,r)) ) 
    \leq \Big(1-\frac{\alpha}{2}\Big)\mu(B(x_0,r)),
\end{equation*}
for almost every $t_0<t<t_0+\delta r^p$. For such $t$, we apply \Cref{eq: Poincare for function zero on large set} with exponent $1\leq q<p$ on the right-hand side and obtain
\begin{equation*}
\begin{split}
    &(k_i-k_{i+1})\mu(\{u(t)<k_{i+1}\}\cap B(x_0,r))) 
     \leq \int_{B(x_0,r)} v(t)\dmu \\
    &\qquad \leq c r \mu(B(x_0,r))^{1-\frac{1}{q}} \biggl(\int_{B(x_0,r)}g_{v(t)}^q \dmu \biggr)^\frac{1}{q}.
\end{split}
\end{equation*}
Integrating the estimate above over $(t_0,t_0+\delta r^p)$ and using H\"older's inequality we obtain
\begin{equation*}
\begin{split}
    &(k_i-k_{i+1}) \nu(\{u<k_{i+1}\}  \cap Q) \\
     &\qquad \leq c r \mu(B(x_0,r))^{1-\frac{1}{q}} \int_{t_0}^{t_0+\delta r^p}\biggl(\int_{B(x_0, r)}g_{v(t)}^q \dmu \biggr)^\frac{1}{q} \dt \\
    &\qquad \leq cr \nu(Q)^{1-\frac{1}{q}} \biggl( \int_{Q} g_v^q \dnu \biggr)^\frac{1}{q}\\
    &\qquad \leq cr \nu(Q)^{1-\frac{1}{q}} \biggl( \int_{Q} g_{(u-k_{i})_-}^p \dnu \biggr)^\frac{1}{p} \nu(\{ k_{i+1}<u<k_i\}\cap Q )^{\frac{1}{q}-\frac{1}{p}}.
\end{split}
\end{equation*}
Here, we used the fact that $g_v=g_u \chi_{\{k_{i+1} < u <k_i \}}$. 
By \Cref{lemma: caccioppoli with cutoff} we have
\begin{equation*}
\begin{split}
    \int_{Q} g_{(u-k_i)_-}^p \dnu 
    & \leq \frac{c}{r^p}\int_{B(x_0,\sigma r)\times (t_0,t_0+\delta r^p)} (u-k_i)^p_-\dnu\\
    &\quad+ c\int_{B(x_0,\sigma r)} G_-(u(t_0),k_i)\dmu 
     \leq c r^{-p} \nu(Q) k_i^p,
\end{split}
\end{equation*}
which implies 
\begin{equation*}
\begin{split}
    (k_i-k_{i+1})  \nu(\{u<k_{j+1}\} \cap Q) \leq k_i c \nu(Q)^{1-\frac{1}{q}+\frac{1}{p}}  \nu(\{ k_{i+1}<u<k_i\}\cap Q )^{\frac{1}{q}-\frac{1}{p}}.
\end{split}
\end{equation*}
We conclude that 
\begin{equation*}
    \nu(\{u<k_{i+1}\} \cap Q)^{\frac{pq}{p-q}} \leq  c \nu(Q)^{\frac{pq}{p-q}-1}  \nu(\{ k_{i+1}<u<k_i\}\cap Q).
\end{equation*}
Summing the previous inequality over $i\in \{0,\dots,j\}$, we obtain
\begin{equation*}
    j \nu(\{u<k_{j}\} \cap Q)^{\frac{pq}{p-q}} \leq  c \nu(Q)^{\frac{pq}{p-q}},
\end{equation*}
and hence 
\begin{equation*}
    \nu(\{u<k_{j}\} \cap Q) \leq  c j^{-\frac{p-q}{pq}} \nu(Q),
\end{equation*}
which proves the claim for $\theta=\frac{p-q}{pq}$.
\end{proof}
The following corollary removes the restriction $\delta\leq \delta_0$ from \Cref{lemma: shrinking lemma short times}.
\begin{corollary}
\label{cor: shrinking lemma any time}
Assume that $\mu$ is doubling and that a $(1,p)$-Poincar\'e inequality holds in $(X,d,\mu)$. Let $u$ be a nonnegative supersolution in $\Omega_T$, $k>0$, $0<\alpha<1$, $0<\gamma<1$, $\delta>0$ and $\sigma >1$. Then there exists a constant $\lambda>0$, depending on $p$, $c_\mu$, $c_P$, $\alpha$, $\gamma$, $\delta$ and $\sigma$, such that if $B(x_0,\sigma r)\times[t_0,t_0+\delta r^p)\subset\Omega_T$ and $t_0$ is a Lebesgue point of $u$ with 
\begin{equation*}
    \mu(\{u(t_0)>k\}\cap B(x_0,r))) \geq \alpha \mu(B(x_0,r)),
\end{equation*}
then
\begin{equation*}
    \nu\big(\big\{ u < \lambda k\big\}\cap Q\big) \leq \gamma \nu(Q) ,
\end{equation*}
where $Q=B(x_0,r)\times (t_0,t_0+\delta r^p)$.
\end{corollary}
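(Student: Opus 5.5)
We remove the restriction on $\delta$ by splitting the long cylinder into short time slabs and repeatedly applying \Cref{lemma: propagation of pos in meas} and \Cref{lemma: shrinking lemma short times}. Fix $\delta_0$ and $\epsilon$ from \Cref{lemma: propagation of pos in meas}, applied with $\alpha/2$ in place of $\alpha$ to leave room for later steps. If $\delta\le\delta_0$, the claim follows directly from \Cref{lemma: shrinking lemma short times}: pick $j\in\mathbb N$ with $c j^{-\theta}\le\gamma$ and set $\lambda=\epsilon 2^{-j}$. We use the strict inequality $u>k$ in the hypothesis, which only strengthens the $\geq$ version needed.

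In general, let $\delta_1=\min\{\delta,\delta_0\}$ and $N=\lceil \delta/\delta_1\rceil$. We split $(t_0,t_0+\delta r^p)$ into $N$ consecutive intervals $I_m=(s_m,s_m+\delta_1' r^p)$ with $\delta_1'=\delta/N\le\delta_0$, where $s_m=t_0+m\delta_1' r^p$. We then argue inductively to obtain levels $k_m$ with $k_0=k$ such that
\[
\mu(\{u(t)\geq k_m\}\cap B(x_0,r))\geq \alpha 2^{-m}\mu(B(x_0,r))
\]
for almost every $t\in I_{m-1}$. To pass from one slab to the next, we pick a Lebesgue point $s$ of $u$ in the last part of $I_{m-1}$ at which this density holds. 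We then apply \Cref{lemma: propagation of pos in meas} from time $s$ with level $k_m$, density $\alpha 2^{-m}$ and time length $\delta_1' r^p$; this covers $I_m$ because $s$ is close to $s_m$. The result is level $k_{m+1}=\epsilon_m k_m$ with density $\alpha 2^{-m-1}$ on $I_m$. The constants $\epsilon_m$ and the corresponding time step depend on $m$ through the density, so we fix in advance the finitely many choices $m\le N$. We can avoid the loss of time from choosing $s$ inside $I_{m-1}$ by taking a finer partition with steps of size $\min_m\delta_0(\alpha 2^{-m})/2$, which is still a finite number depending only on $\alpha$ and $\delta$.

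Once every slab $I_m$ has a starting time carrying a positive density $\alpha 2^{-N}$ at level $k_N':=\min_m k_m$, we apply \Cref{lemma: shrinking lemma short times} on each slab. We use the density $\alpha 2^{-N}$ and level $k_N'$, choosing $j$ such that $c j^{-\theta}\le \gamma$, where $c$ and $\theta$ depend only on the fixed data. This gives
\[
\nu(\{u<\epsilon 2^{-j}k'_N\}\cap(B(x_0,r)\times I_m))\le \gamma\,\nu(B(x_0,r)\times I_m)
\]
on each slab. Summing over the slabs gives the claim with $\lambda=\epsilon 2^{-j}k_N'/k$, and $\lambda$ depends only on $p,c_\mu,c_P,\alpha,\gamma,\delta,\sigma$. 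The slabs starting at the Lebesgue points are slightly shifted, but they still cover $Q$ up to overlaps, which only helps the union bound after replacing $\gamma$ by $\gamma/2$.

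The main obstacle is this bookkeeping. The starting points of the slabs must be Lebesgue points at which the density hypothesis holds, and the slabs must still cover the whole interval $(t_0,t_0+\delta r^p)$. The constants must also stay uniform, since they depend on the degraded density $\alpha 2^{-m}$ only through the finite number $N$, which is controlled by $\delta$ and $\alpha$. We handle this by fixing the worst density $\alpha 2^{-N}$ from the start. Note that $N$ depends on $\delta_0$, which in turn depends on the density, so we must check this choice is not circular. It is not: instead of halving the density at each step, we use \Cref{lemma: propagation of pos in meas} with fixed $\alpha$ to obtain density $\alpha/2$ at each slab. We then restart each step from density $\alpha/2$, and from the second step on we only ever propagate the fixed density $\alpha/2$ to $\alpha/4$, so $\delta_0=\delta_0(\alpha/2)$ works uniformly.
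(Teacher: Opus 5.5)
There is a genuine gap: your iteration never removes the dependence of $\delta_0$ on the density. Chaining \Cref{lemma: propagation of pos in meas} can only lower the density, from $a$ to $a/2$ per step. The admissible step $\delta_0$ and the factor $\epsilon$ depend on the current density, so after $m$ steps you may only advance by $\delta_0(\alpha2^{-m})r^p$.

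Nothing gives a lower bound on these steps that is uniform in $m$. The proof of that lemma only yields $\delta_0(a)$ of order $a^{1+p/\beta}$, which is summable along $a=\alpha2^{-m}$, so the chain can stall before reaching $t_0+\delta r^p$. For the same reason your $N$, and the ``finer partition'' with step $\min_m\delta_0(\alpha2^{-m})/2$, are defined circularly.

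Your fix in the last paragraph does not work. After propagating density $\alpha/2$ across a slab, you only know density $\alpha/4$ there. No step of your argument raises it back to $\alpha/2$ at the start of the next slab, so ``restarting from $\alpha/2$'' is unjustified. The same problem affects your final use of \Cref{lemma: shrinking lemma short times} with density $\alpha2^{-N}$: its $c$, $\theta$, $\epsilon$ and admissible slab length then depend on $N$, not only on the fixed data.

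The missing idea is that \Cref{lemma: shrinking lemma short times} itself restores the density. It must be used at every step, not only at the end, and this is what the paper does.

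\begin{enumerate}
\item Fix $\delta_0=\delta_0(\alpha)$, the corresponding $\epsilon,c,\theta$, and $j$ with $2cj^{-\theta}\le\min\{1-\alpha,\gamma/2\}$.
\item Take a window $Q'=B(x_0,r)\times(s,s+\delta_0r^p)$ starting at a good Lebesgue point $s$ with level $k'$. The lemma gives $\nu(\{u<\epsilon2^{-j}k'\}\cap Q')\le\frac{\gamma}{2}\nu(Q')$.
\item By Fubini there is a Lebesgue point $s'$ in the second half of the window with $\mu(\{u(s')\ge\epsilon2^{-j}k'\}\cap B(x_0,r))\ge(1-2cj^{-\theta})\mu(B(x_0,r))\ge\alpha\,\mu(B(x_0,r))$.
\item The density is back to $\alpha$, so you can restart from $s'$ at level $\epsilon2^{-j-1}k'$ with exactly the same constants. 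Halving the level gives the strict inequality the lemma requires.
\end{enumerate}

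Each window advances by at least $\delta_0r^p/2$, and each point lies in at most two windows. About $2\delta/\delta_0$ windows therefore cover $(t_0,t_0+\delta r^p)$. Summing over them gives the claim with $\lambda$ of order $(\epsilon2^{-j-1})^{2\delta/\delta_0}$, which depends only on $p$, $c_\mu$, $c_P$, $\alpha$, $\gamma$, $\delta$ and $\sigma$.
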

\begin{proof}
The claim follows directly from \Cref{lemma: shrinking lemma short times} if $\delta \leq \delta_0$, where $\delta_0$ is as in \Cref{lemma: propagation of pos in meas}. Thus, we may assume that $\delta > \delta_0$. Let $k\in\mathbb N$ be such that $k\frac{\delta_0}{2}<\delta\leq (k+1)\frac{\delta_0}{2}$. Denote 
\begin{equation*}
    Q_i = B(x_0,r)\times (t_0+i\tfrac{\delta_0}{2} r^p, t_0+(i+2)\tfrac{\delta_0}{2} r^p)
\end{equation*}
for $i\in \{0,\dots,k-2\}$ and set $Q_{k-1}=B(x_0,r)\times (t_0+k\frac{\delta_0}{2} r^p,t_0+\delta r^p)$. Applying \Cref{lemma: shrinking lemma short times} to $Q_0$ we find that 
\begin{equation*}
    \nu\big(\bigl\{ u < \lambda k\big\}\cap Q_0\bigr) 
    \leq\frac{\gamma}{2}\nu(Q_0), 
\end{equation*}
for $\lambda$ depending only on $p$, $c_\mu$, $c_P$, and $\alpha$. Hence, there exists $t_0+\frac{\delta_0}{2} r^p<t_1< t_0+\delta_0 r^p$ such that
\begin{equation*}
    \mu(\{u(t_1)<k\}\cap B(x_0,r))) \leq \nu \mu(B(x_0,r)).
\end{equation*}
Applying \Cref{lemma: shrinking lemma short times} to $Q_1$ we conclude that
\begin{equation*}
    \nu(\{ u < \lambda^2 k\}\cap Q_1) 
    \leq\frac{\gamma}{2}\nu(Q_1).
\end{equation*}
Continuing this process recursively, we find $\lambda$ depending on $p$, $c_\mu$, $c_P$, $\alpha$ and $\delta$ such that 
\begin{equation*}
    \nu(\{ u < \lambda k\}\cap Q_i) 
    \leq\frac{\gamma}{2}\nu(Q_i),
\end{equation*}
for every $i\in \{0,\dots,k-1\}$. For such $\lambda$ we deduce that 
\begin{equation*}
\begin{split}
    \nu\big(\big\{ u < \lambda k\big\}\cap Q\big)
    & \leq \sum_{i=0}^{k-1} \nu\big(\big\{ u < \lambda k\big\}\cap Q_i\big) 
     \leq \sum_{i=0}^{k-1}\frac{\gamma}{2}\nu( Q_i)
     \leq \gamma \nu(Q),
\end{split}
\end{equation*}
which finishes the proof.
\end{proof}

\subsection{A De Giorgi--DiBenedetto lemma}
The final step to showing the expansion of positivity is the following De Giorgi--DiBenedetto lemma, which is proved in a similar way as \Cref{prop: loc bdd of subsol}.
\begin{lemma}
\label{lemma: de giorgi lemma}
Assume that $\mu$ is doubling and that a $(1,p)$-Poincar\'e inequality holds in $(X,d,\mu)$.
Let $\delta>0$, $\sigma>1$, $\mu^-\in \mathbb R$. Assume that $u$ is a supersolution of \eqref{eq: PDE} in $Q^-_{\sigma r,\delta}(x_0,t_0)$, where $0<\sigma r\leq\frac13\diam(X)$ and $u \geq \mu^-$ almost everywhere in  $Q_{\sigma r,\delta}^-(x_0,t_0)$.
Let $0<a<1$ and $M>1$. There exists a constant $0<\gamma<1$, depending on $c_\mu, c_P$, $p$, $\delta$, $\sigma$, $a$ and  $M$, such that if 
\begin{equation}
    \label{eq: meas dens cond in de giorgi lemma}
    \nu(\{ u -\mu^-< k  \}\cap Q_{\sigma r,\delta}^-(x_0,t_0)) \leq \gamma \nu(Q_{\sigma r,\delta}^-(x_0,t_0))
\end{equation}
and $ \lvert \mu^-\rvert\leq M k$, then $u -\mu^-\geq a k$ $\nu$-almost everywhere in $Q_{r,\delta}^-(x_0,t_0)$.
\end{lemma}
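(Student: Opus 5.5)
We run a De Giorgi iteration on lower truncations of $u$, mirroring the proof of \Cref{prop: loc bdd of subsol} but with $(u-\ell)_-$ in place of $(u-k)_+$.

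\textbf{Setup.} For $n\in\mathbb N$ set
\[
\ell_n=\mu^-+ak+(1-a)k2^{-n},\qquad r_n=r+(\sigma-1)r2^{-n},\qquad Q_n=Q^-_{r_n,\delta}(x_0,t_0),
\]
so that $\ell_0=\mu^-+k$, $\ell_n\downarrow \mu^-+ak$ and $Q_0=Q^-_{\sigma r,\delta}(x_0,t_0)$. Take cutoffs $\varphi_n=\eta_n\xi_n$, where $\eta_n$ is Lipschitz in space with constant $c2^n/((\sigma-1)r)$. The function $\xi_n$ vanishes near $t_0-\delta r_n^p$, equals one after $t_0-\delta r_{n+1}^p$, and has slope at most $c2^{pn}/(\delta r^p)$. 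The quantity to iterate is
\[
Y_n=\frac{\nu(\{u<\ell_n\}\cap Q_n)}{\nu(Q_n)}.
\]

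\textbf{Energy estimate.} The supersolution analogue of \Cref{lemma: caccioppoli with cutoff} for $G_-$ and $(u-\ell)_-$ is obtained by testing with $\eta=\xi\varphi^p(u-\ell)_-\ge 0$, exactly as in that proof. With $\ell=\ell_n$ it gives
\[
\esssup_t \int G_-(u,\ell_n)\varphi_n^p\dmu+\int g_{(u-\ell_n)_-}^p\varphi_n^p\dnu\le c\,2^{pn}r^{-p}\int_{Q_n}\Bigl((u-\ell_n)_-^p+G_-(u,\ell_n)\Bigr)\dnu .
\]
The main obstacle is that $G_-$ is not comparable to $(u-\ell)_-^p$ uniformly: by the $G_\pm$ bounds, $G_-(u,\ell)\approx(|u|+|\ell|)^{p-2}(u-\ell)_-^2$. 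This is exactly where $|\mu^-|\le Mk$ and $u\ge\mu^-$ enter. On $\{u<\ell_n\}$ both $|u|$ and $|\ell_n|$ are at most $(M+1)k$, and $(u-\ell_n)_-\le k$.

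\textbf{Handling the degeneracy.}
\begin{itemize}
\item Upper bound on the right-hand side: each term is at most $c(M)k^p\chi_{\{u<\ell_n\}}$.
\item Lower bound on the left-hand side for $p\ge2$: the coefficient $(|u|+|\ell|)^{p-2}$ may degenerate when $u,\ell$ are near $0$. Following the proof of \Cref{prop: loc bdd of subsol}, we use the intermediate level $\tilde\ell_n=(\ell_n+\ell_{n+1})/2$. On $\{u<\tilde\ell_n\}$ one has $(u-\ell_n)_-\ge c\,2^{-n}(1-a)k$, which controls the gap, so that $(u-\tilde\ell_n)_-^p\le c\,2^{cn}G_-(u,\ell_n)$.
\item Lower bound for $1<p<2$: here $(|u|+|\ell|)^{p-2}\ge((M+1)k)^{p-2}$, so $G_-\ge c(M)k^{p-2}(u-\ell)_-^2$. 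On $\{u<\tilde\ell_n\}$ this again dominates $c\,2^{-cn}(u-\tilde\ell_n)_-^p$.
\end{itemize}
In both cases
\[
\esssup_t\dashint (u-\tilde\ell_n)_-^p\varphi_n^p\dmu+r^p\dashint g^p_{(u-\tilde\ell_n)_-\varphi_n}\dnu\le c\,b^n k^pY_n ,
\]
with $c$ depending on $p,M,a,\sigma,\delta$. Getting this step right is the main difficulty.

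\textbf{Sobolev step and iteration.} Apply \Cref{lemma: par sob} to $(u-\tilde\ell_n)_-\varphi_n$. This is allowed since $\sigma r\le\frac13\diam X$, and comparable ball measures follow by doubling. On $\{u<\ell_{n+1}\}\cap Q_{n+1}$ we have $(u-\tilde\ell_n)_-\ge(\tilde\ell_n-\ell_{n+1})\ge c\,2^{-n}(1-a)k$. Hence
\[
(c\,2^{-n}k)^{p\kappa}Y_{n+1}\le c\dashint_{Q_n}\bigl((u-\tilde\ell_n)_-\varphi_n\bigr)^{p\kappa}\dnu\le c\,b^n(k^pY_n)^{\kappa},
\]
so $Y_{n+1}\le C b^nY_n^{\kappa}$. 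The $k$-powers cancel, which is why $\gamma$ is independent of $k$. By \cite[Lemma 5.1]{DiBenedettoGIanazzaVespri2012}, $Y_n\to0$ provided $Y_0\le C^{-1/(\kappa-1)}b^{-1/(\kappa-1)^2}=:\gamma$, and $Y_0\le\gamma$ is exactly hypothesis \eqref{eq: meas dens cond in de giorgi lemma}. Since $Y_n\to0$, we get $u\ge\mu^-+ak$ almost everywhere in $Q^-_{r,\delta}(x_0,t_0)$.
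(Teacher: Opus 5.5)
Your proposal is correct and follows the paper's proof essentially step by step: the same levels $\ell_n$ and $\tilde\ell_n$, the same shrinking cylinders and cutoffs, the $G_-$ energy estimate compared with $(u-\tilde\ell_n)_-^p$ separately for $1<p<2$ (using $|u|+|\ell_n|\le c(M)k$) and $p\ge 2$, then the parabolic Sobolev inequality and fast geometric convergence. The only difference is minor. For $p\ge 2$ no degeneracy actually occurs, since $(u-\ell_n)_-\le |u|+|\ell_n|$ gives $(u-\tilde\ell_n)_-^p\le (u-\ell_n)_-^p\le (|u|+|\ell_n|)^{p-2}(u-\ell_n)_-^2$ directly, which is what the paper uses; your gap argument is valid but unnecessary.
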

\begin{proof}
For $n\in \mathbb N$, let
\begin{equation*}
        k_n = \mu^-+ak+\frac{1-a}{2^n}k, \quad \Tilde k_n=\frac{k_{n+1}+k_n}{2}, \quad r_n = r + \frac{\sigma-1}{2^{n}} r,
\end{equation*}
$B_n =B(x_0,r_n)$ and $Q_n= B_n \times (t_0-\delta r_n^p,t_0)$.
Let $\eta_n$ be a $\frac{c}{r_n-r_{n+1}}$-Lipschitz cutoff function with $0\leq \eta\leq 1$, $\eta=1$ on $B_{n+1}$ and $\supp \eta \subset B_n$. Let
\begin{equation*}
    \xi_n(t)
        =\begin{cases}
            0 ,&\quad t\leq t_0-\delta r_{n}^p,\\
            \frac{t-t_0+\delta r_{n}^p}{\delta(r_n^p-r_{n+1}^p)}, & \quad t_0-\delta r_n^p <t\leq t_0-\delta r_{n+1}^p, \\
            1,&\quad t\geq t_0-\delta r_{n+1}^p,
        \end{cases}
\end{equation*}
and denote $\varphi_n=\eta_n\xi_n$. Let $\kappa$ be as in \Cref{lemma: par sob}.  \Cref{lemma: par sob} gives
\begin{equation}
\label{eq: de giorgi lemma est from par sob}
\begin{split}
    k^{p\kappa} \frac{\nu(A_{n+1})}{\nu(Q_{n+1})}  
    &\leq c2^{n(p\kappa)}\dashint_{Q_n}((u-\Tilde k_n)_-\varphi_n)^{p\kappa}\dnu\\
    &\leq c2^{n(p\kappa)}r_n^p\dashint_{Q_n}g_{(u-\Tilde k_n)_-\varphi_n}^p\dnu \esssup_{t_0-r_n^p<t<t_0}\biggl(\dashint_{B_n}(u(t)-\Tilde k_n)_-^p \varphi_n^p\dmu\biggr)^{\kappa-1}.
\end{split}
\end{equation}
Let us estimate the right hand side of \eqref{eq: de giorgi lemma est from par sob}. Using \Cref{lemma: caccioppoli with cutoff}, we have 
\begin{equation}
\label{eq: cacciop for de giorgi lemma}
\begin{split}
     &\esssup_{t_0-r_n^p<t<t_0} \dashint_{B_n} (\lvert u(t) \rvert +\lvert k_n\rvert )^{p-2}(u(t)-k_n)_-^2 \varphi_n^p\dmu + r_n^p \dashint_{Q_n}g_{(u-k_n)_-}^p \varphi_n^p\dnu \\
    &\qquad \leq c 2^{pn}\dashint_{Q_n}(u-k_n)_-^p +( \lvert u\rvert +\lvert k_n\rvert )^{p-2}(u-k_n)_-^2\dnu \\
    &\qquad \leq c  2^{pn} k^p \frac{\nu(A_n)}{\nu(Q_n)},
\end{split}
\end{equation}
where in the last step we used the assumption $\lvert \mu^-\rvert\leq M k$. Let us compare $(u-\Tilde k_n)_-^p$ and $(\lvert u \rvert +\lvert k_n\rvert )^{p-2}(u-k_n)_-^2$. Assume first that $1<p<2$. Since $\lvert \mu^-\rvert\leq M k$, we have $\lvert u\rvert + \lvert k_n\rvert \leq c k$ on $\{u<\Tilde k_n\}$, and hence 
\begin{align*}
    (u-\Tilde k_n)_-^p\leq k^p\chi_{u<\Tilde k_n} 
    &\leq c (\lvert u\rvert +\lvert k_n\rvert )^{p-2} k^2\chi_{u<\Tilde k_n}\\ 
    &\leq c 2^{2n}c (\lvert u\rvert +\lvert k_n\rvert )^{p-2} (u-k_n)_-^2.
\end{align*}
For $2\leq p <\infty$, we simply estimate
\begin{equation*}
    (u-\Tilde k_n)_-^p\leq (u-k_n)_-^p\leq  (\lvert u\rvert +\lvert k_n\rvert )^{p-2} (u-k_n)_-^2.
\end{equation*}
In either case we conclude that 
\begin{align*}
    &\esssup_{t_0-r_n^p<t<t_0}\dashint_{B_n} (u(t)-\Tilde k_n)_+^p \varphi_n^p\dmu\\
    &\qquad\leq c2^{2n}\esssup_{t_0-r_n^p<t<t_0} \dashint_{B_n} (\lvert u(t) \rvert +\lvert k_n\rvert )^{p-2}(u(t)-k_n)_-^2 \varphi_n^p\dmu.
\end{align*}
Using the previous estimate, \eqref{eq: cacciop for de giorgi lemma} and the product rule we obtain
\begin{equation}
\label{eq: wanted est for de giorgi lemma}
\begin{split}
    & \esssup_{t_0-r_n^p<t<t_0}\dashint_{B_n} (u(t)-\Tilde k_n)_-^p \varphi_n^p\dmu + r_n^p\dashint_{Q_n}g_{(u-\Tilde k_n)_- \varphi_n}^p\dnu  \\
    &\qquad \leq  c2^{2n}\esssup_{t_0-r_n^p<t<t_0} \dashint_{B_n} (\lvert u(t) \rvert +\lvert k_n\rvert )^{p-2}(u(t)-k_n)_-^2 \varphi_n^p\dmu \\
    &\qquad\qquad+ c r_n^p\dashint_{Q_n}g_{(u-k_n)_-}^p \varphi_n^p\dnu 
     + cr_n^p\dashint_{Q_n}(u-k_n)_-^p g_{\varphi_n}^p\dnu\\
    & \qquad\leq c 2^{(p+2)n} k^p \frac{\nu(A_n)}{\nu(Q_n)}.
\end{split}
\end{equation}
Applying \eqref{eq: wanted est for de giorgi lemma} in \eqref{eq: de giorgi lemma est from par sob} we find that 
\begin{equation}
    \label{eq: decay in de giorgi lemma proof}
    \frac{\nu(A_{n+1})}{\nu(Q_{n+1})}\leq c2^{(2p+2)\kappa n}\biggl( \frac{\nu(A_n)}{\nu(Q_n)} \biggr)^{\kappa}.
\end{equation}
Denote $Y_n=\frac{\nu(A_{n+1})}{\nu(Q_{n+1})}$. Applying \cite[Lemma 5.1]{DiBenedettoGIanazzaVespri2012} to \eqref{eq: decay in de giorgi lemma proof} shows that $Y_n\to 0$ as $n\to \infty$ provided that \eqref{eq: meas dens cond in de giorgi lemma} holds with $\gamma$ small enough. Since
\begin{equation*}
    \frac{\nu(\{u<\mu^-+ak\}\cap Q_{r,\delta}^-(x_0,t_0))}{\nu(Q_{ r,\delta}^-(x_0,t_0))} \leq c Y_n,
\end{equation*}
for every $n\in N$, the convergence $Y_n\to 0$ as $n\to\infty$ implies 
\[
\essinf_{Q_{r,\delta}^-(x_0,t_0)} u \geq \mu^-+ak.
\]
\end{proof}

\begin{remark}
\label{remark: resctrion on radius in de giorgi lemma can be removed}
The restriction $0<\sigma r\leq\frac13\diam(X)$ in \Cref{lemma: de giorgi lemma} can be removed. To see this, let $\delta>0$, $\sigma>1$, $\mu^-\in \mathbb R$, and assume that $u$ is a supersolution in $Q^-_{\sigma r,\delta}(x_0,t_0)$ and
$u \geq \mu^-$ almost everywhere in $Q_{\sigma r,\delta}^-(x_0,t_0)$.
Let $0<a<1$ and $M>1$. We may assume $\sigma \leq 2$. If $r\leq\frac16\diam(X)$, we simply apply \Cref{lemma: de giorgi lemma}. Thus, we may assume that $\frac16\diam(X)\leq r \leq \diam(X)$. The doubling condition implies that there exists $\theta$ depending only on $c_\mu$, $p$ and $\sigma$, such that if 
\begin{equation*}
    \nu(\{ u -\mu^-< k  \}\cap Q_{\sigma r,\delta}^-(x_0,t_0)) \leq \theta \gamma \nu(Q_{\sigma r,\delta}^-(x_0,t_0))
\end{equation*}
then 
\begin{equation}
    \label{eq: meas dens cond for applying de giorgi in smaller cylinder}
    \nu(\{ u -\mu^-< k  \}\cap Q_{(\sigma -1) r,\delta}^-(z)) \leq  \gamma \nu(Q_{(\sigma-1)r,\delta}^-(z))
\end{equation}
for every $z\in Q_{r,\delta}(x_0,t_0)$. We use \eqref{eq: meas dens cond for applying de giorgi in smaller cylinder} and apply \Cref{lemma: de giorgi lemma} in $Q_{(\sigma-1)r,\delta}^-(z)$ to conclude that $u -\mu^-\geq a k$
almost everywhere in $Q_{(\sigma-1)\frac r2,\delta}^-(z)$. Since $z$ was arbitrary, we conclude $u -\mu^-\geq a k$ almost everywhere in $Q_{r,\delta}^-(x_0,t_0)$.
\end{remark}

Arguing as in \cite{Liao2021}, the following corollary is implied by \Cref{lemma: de giorgi lemma}. We include the proof for the convenience of the reader.
\begin{corollary}
\label{cor: lower semicont of supersolutions}
Assume that $\mu$ is doubling and that a $(1,p)$-Poincar\'e inequality holds in $(X,d,\mu)$.
Let $u$ be a supersolution of \eqref{eq: PDE} in $\Omega_T$. Then the lower semicontinuous function 
\begin{equation*}
    u^*(z)=\lim_{r\to 0} \essinf_{Q_r^-(z)} u
\end{equation*}
agrees with $u$ $\nu$-almost everywhere in $\Omega_T$.
\end{corollary}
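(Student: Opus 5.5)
The plan is to show that $u^*=u$ at every point $z=(x,t)$ which is a Lebesgue point of $u$ in the sense
\[
\lim_{r\to0}\dashint_{Q_r^-(z)}\lvert u-u(z)\rvert\dnu=0 .
\]
With respect to the product measure $\nu$ and the cylinders $Q_r^-(z)$ (which form a doubling family, since $\mu$ is doubling and time is Lebesgue measure), $\nu$-almost every point of $\Omega_T$ is such a point. We work locally, so we may first assume $u$ is locally essentially bounded from below. This is the case for supersolutions of the form we consider (in the applications, nonnegative ones); in general one truncates: $\min\{u,m\}$ is again a supersolution, and it suffices to treat $u^*$ on $\{u<m\}$ for each $m$.

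The first step is the easy inequality. Since $r\mapsto \essinf_{Q_r^-(z)}u$ is nonincreasing as $r$ grows, the limit defining $u^*$ exists. At a Lebesgue point we have
\[
\essinf_{Q_r^-(z)}u\le \dashint_{Q_r^-(z)}u\dnu\to u(z),
\]
hence $u^*(z)\le u(z)$. Lower semicontinuity is also standard: for $z'$ near $z$ the cylinder $Q^-_{r/2}(z')$ is contained in a cylinder of size comparable to $r$ around $z$. One way to see this cleanly is to use the equivalent definition of $u^*$ with centered cylinders, which gives the same limit by a sandwich argument.

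The main step is the reverse inequality $u^*(z)\ge u(z)$, using \Cref{lemma: de giorgi lemma} together with \Cref{remark: resctrion on radius in de giorgi lemma can be removed}. Fix $\epsilon>0$ and take $\sigma=2$, $\delta=1$, $a=\frac12$. Set
\[
\mu^-_r=\essinf_{Q_{2r}^-(z)}u,\qquad k_r=u(z)-\epsilon-\mu^-_r .
\]
We may assume $u^*(z)<u(z)-2\epsilon$, since otherwise there is nothing to prove. Then for all small $r$ we have $\mu^-_r\le u^*(z)$, so $k_r\ge\epsilon>0$. Moreover $\lvert\mu^-_r\rvert$ is bounded (using local boundedness from below and $\mu_r^-\le u(z)$), so $\lvert\mu_r^-\rvert\le Mk_r$ with $M$ independent of $r$. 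Let $\gamma=\gamma(M)$ be the constant from \Cref{lemma: de giorgi lemma}. The set $\{u-\mu_r^-<k_r\}$ equals $\{u<u(z)-\epsilon\}$, and by Chebyshev and the Lebesgue point property,
\[
\frac{\nu(\{u<u(z)-\epsilon\}\cap Q_{2r}^-(z))}{\nu(Q_{2r}^-(z))}\le\frac1\epsilon\dashint_{Q_{2r}^-(z)}\lvert u-u(z)\rvert\dnu\le\gamma
\]
for $r$ small. \Cref{lemma: de giorgi lemma} then gives $u\ge \mu_r^-+\frac12 k_r$ almost everywhere in $Q_r^-(z)$, that is,
\[
\essinf_{Q_r^-(z)}u\ge \tfrac12\bigl(\mu_r^-+u(z)-\epsilon\bigr).
\]
Letting $r\to0$ gives $u^*(z)\ge\frac12(u^*(z)+u(z)-\epsilon)$, i.e. $u^*(z)\ge u(z)-\epsilon$, which contradicts the assumption. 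Letting $\epsilon\to0$ concludes.

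The main obstacle is the uniformity of $M$, i.e. keeping $\lvert\mu^-\rvert\le Mk$ with $\gamma$ independent of $r$. This needs $\mu^-_r$ bounded, which is why I would first localize and reduce to a supersolution bounded below. If that boundedness is unavailable, I would instead truncate at $\max\{u,-m\}$, which is not obviously a supersolution. That is exactly the point I would double-check.
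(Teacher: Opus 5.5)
Your core argument is the paper's: at a Lebesgue point $z$ you get $u^*(z)\le u(z)$ trivially, use Chebyshev and the Lebesgue point property to verify the small-density hypothesis of \Cref{lemma: de giorgi lemma}, and obtain a lower bound on a smaller backward cylinder that contradicts the definition of $u^*(z)$. The bookkeeping differs slightly. The paper fixes $\mu^-=\essinf_{Q^-_R(z)}u$ once and chooses $k$, $a$ with $\mu^-+k<u(z)$ and $\mu^-+ak>u^*(z)$, so one application of the lemma gives the contradiction directly. You instead let $\mu^-_r$ vary with $r$, keep $a=\frac12$ and pass to the limit. Both work. Your uniform choice of $M$ is justified once you know $-\infty<\mu^-_{r_0}\le\mu^-_r\le u(z)$ and $k_r\ge\epsilon$.

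The gap is exactly the point you flagged: finiteness of $\mu^-_{r_0}$, i.e.\ local essential boundedness of $u$ from below. You need it to apply \Cref{lemma: de giorgi lemma} at all and to know that $u^*(z)$ is finite. Your proposed reduction does not supply it. The truncation $\min\{u,m\}$ cuts $u$ from above and leaves the lower tail untouched, and $\max\{u,-m\}$ is not a supersolution in general. No truncation is needed. With the convention \eqref{eq: signconv} we have $(-u)^{p-1}=-u^{p-1}$, $g_{-u}=g_u$ and $g_{-u+\psi}=g_{u-\psi}$. So testing the supersolution inequality for $u$ with $\varphi=-\psi\ge0$ shows that $-u$ is a subsolution. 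Then \Cref{prop: loc bdd of subsol} gives
\[
\esssup_{Q^-_{\rho/2}(z)}(-u)\le c\biggl(\dashint_{Q^-_\rho(z)}(-u)_+^p\dnu\biggr)^{\frac1p}<\infty
\]
for small $\rho$; this is how the paper's proof begins.

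Separately, your justification of lower semicontinuity is not correct as written. Inclusion of cylinders only gives $\essinf_{B(x,r)\times(t-r^p,t+r^p)}u\le\essinf_{Q^-_r(z)}u$, so no sandwich shows that the centred and backward definitions agree everywhere. For example, for $u=\chi_{\{s<t_1\}}$ they differ at time $t_1$, and the backward version is not lower semicontinuous there. Any such statement must use that $u$ is a supersolution. The paper does not prove lower semicontinuity in this corollary either, so this does not affect the almost-everywhere identity.
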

\begin{proof}
By \Cref{prop: loc bdd of subsol}, a supersolution is locally bounded from below. Let $z\in\Omega_T$ be a Lebesgue point of $u$. Clearly $u^*(z)\leq u(z)$. Suppose that the reverse inequality does not hold, so that $u^*(z)<u(z)$. Let $R>0$ be such that $Q^-_{R}(z)\Subset\Omega_T$ and $\mu^-=\essinf_{Q^-_{R}(z)} u$. Since $\mu^-\leq u^*(z)<u(z)$ there exist $k>0$ and $0<a<1$ such that
\begin{equation*}
    \mu^- + k < u(z) 
    \quad\text{and}\quad 
    \mu^-+ a k > u^*(z).
\end{equation*}
Denote $M=\frac{\lvert \mu^-\rvert}{k}$. Let $\gamma$ be as in \Cref{lemma: de giorgi lemma} corresponding to the parameters $\delta=1$, $\sigma=2$, $a$ and $M$. We claim that 
\begin{equation}
    \label{eq: density for applying de giorgi lemma for lower semicont}
    \nu(\{u<\mu^-+M\}\cap Q^-_{\rho}(z))<\nu \mu(Q^-_\rho(z)),
\end{equation}
for some $0<\rho<R$. If \eqref{eq: density for applying de giorgi lemma for lower semicont} does not hold, then 
\begin{equation*}
\begin{split}
    \int_{Q^-_{\rho}(z)} \lvert u(z)-u\rvert \dnu 
    & \geq (u(z)-(\mu^-+k) \nu(\{u< \mu^-+M\}\cap Q^-_{\rho}(z)) \\
    & \geq (u(z)-(\mu^-+k))\nu(Q^-_\rho(z)),
\end{split}
\end{equation*}
for every $0<\rho<R$, which contradicts the assumption that $z$ is a Lebesgue point. Hence, since \eqref{eq: density for applying de giorgi lemma for lower semicont} holds we apply \Cref{lemma: de giorgi lemma} and conclude that 
\begin{equation*}
    u \geq \mu^-+ ak>u^*(z) 
\end{equation*}
$\nu$-almost everywhere in  $Q^-_{\frac{\rho}{2}}(z)$.
This contradicts the definition of $u^*(z)$, and thus we conclude that $u(z)\leq u^*(z)$.
\end{proof}

\subsection{Expansion of positivity}
We are ready to combine the previous results to conclude the expansion of positivity property, which will play an essential role in the proof of the parabolic Harnack inequality.
\begin{proposition}
\label{prop: expansion of pos}
Assume that $\mu$ is doubling and that a $(1,p)$-Poincar\'e inequality holds in $(X,d,\mu)$. Let $u$ be a nonnegative supersolution in $\Omega_T$, $k>0$, $0<\alpha<1$, $\delta>0$ and $\sigma>1$. Then there exists a constant $\lambda$, depending on $c_\mu$, $c_P$, $p$, $\alpha$, $\delta$ and $\sigma$, such that if $B(x_0,\sigma r)\times[t_0,t_0+\delta r^p)\subset\Omega_T$ and $t_0$ is a Lebesgue point of $u$ with  
\begin{equation}
    \label{eq: assumption in exp of pos}
    \mu( \{u(t_0)<k\}\cap B(x_0,r)) \leq \alpha \mu(B(x_0,r)),
\end{equation}
then  $u \geq \lambda k$ almost everywhere in $B(x_0,r)\times(t_0+2^{-p}\delta r^p,t_0+\delta r^p)$.
\end{proposition}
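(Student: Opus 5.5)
The plan is to chain the measure-theoretic results of the previous subsections with the De Giorgi--DiBenedetto lemma, applied with $\mu^-=0$. Throughout, $u\geq 0$.

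First, rewrite the hypothesis \eqref{eq: assumption in exp of pos} as
\[
\mu(\{u(t_0)\geq k\}\cap B(x_0,r))\geq (1-\alpha)\mu(B(x_0,r)).
\]
Replacing $k$ by $k/2$ if necessary, so that the strict inequality required in \Cref{cor: shrinking lemma any time} holds, we are in the setting of that corollary with density $1-\alpha$. We apply it on the cylinder $Q=B(x_0,r)\times(t_0,t_0+\delta r^p)$ with a small $\gamma'$, to be fixed later. This yields $\lambda_1$, depending on $c_\mu,c_P,p,\alpha,\delta,\sigma,\gamma'$, such that
\[
\nu(\{u<\lambda_1 k\}\cap Q)\leq\gamma'\nu(Q).
\]

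Next, we transfer this smallness to backward cylinders covering the target set $B(x_0,r)\times(t_0+2^{-p}\delta r^p,t_0+\delta r^p)$, so that \Cref{lemma: de giorgi lemma} (with $\mu^-=0$, $M$ arbitrary, $a=\tfrac12$) can be applied. For a point $(y,s)$ in the target set, consider a backward cylinder
\[
B(y,\rho)\times(s-\delta'\rho^p,s), \qquad \rho=\theta r,
\]
where $\theta$ is small and $\delta'$ is chosen so that $\delta'\rho^p\leq 2^{-p}\delta r^p$. Then the time interval lies inside $(t_0,t_0+\delta r^p)$.

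The spatial part raises a difficulty: $B(y,\rho)$ need not lie inside $B(x_0,r)$ when $y$ is near the boundary. To handle this, run the corollary on the ball $B(x_0,r')$ with $r<r'<\sigma r$, for instance $r'=\tfrac{1+\sigma}{2}r$. The density hypothesis transfers to $B(x_0,r')$ by doubling, with $\alpha$ replaced by a constant depending on $c_\mu,\sigma$. Use the remaining room up to $\sigma r$ for the time-propagation lemmas, which need $B(x_0,\sigma r')$; here one may shrink the parameter $\sigma$ passed to the lemmas accordingly. Taking $\rho=\tfrac{\sigma-1}{4}r$ then keeps $B(y,2\rho)\subset B(x_0,r')$.

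By doubling, $\nu(Q)\leq C(\theta,c_\mu,\delta,\delta')\,\nu(\text{small cylinder})$. Hence choosing
\[
\gamma'=\gamma/C,
\]
with $\gamma$ from \Cref{lemma: de giorgi lemma} for the parameters $(\delta',\sigma=2,a=\tfrac12)$, gives the density condition \eqref{eq: meas dens cond in de giorgi lemma} in each small cylinder at level $\lambda_1 k$. \Cref{remark: resctrion on radius in de giorgi lemma can be removed} removes the diameter restriction on the radius. The lemma then yields $u\geq \tfrac12\lambda_1 k$ a.e.\ on $B(y,\rho/2)\times(s-\delta'(\rho/2)^p,s)$. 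Since $(y,s)$ is arbitrary in the target set, we obtain $u\geq\lambda k$ with $\lambda=\lambda_1/2$.

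The main obstacle is the bookkeeping in this localization: the order of the constant choices must be consistent and free of circularity. The constants are fixed as follows: first $\rho,\delta'$; then $\gamma$ from \Cref{lemma: de giorgi lemma}; then $\gamma'$; and finally $\lambda_1$ from \Cref{cor: shrinking lemma any time}. We must also check that the backward cylinders stay inside $\{t>t_0\}$ and inside $B(x_0,\sigma r)$, which is exactly why the lower time cutoff $2^{-p}\delta r^p$ is needed. A further point is that the lower-level-set measure estimate must hold on the slightly larger ball $B(x_0,r')$ rather than on $B(x_0,r)$. For this it is enough to rerun the proofs of \Cref{lemma: propagation of pos in meas} and \Cref{lemma: shrinking lemma short times} with $r'$ and the reduced $\sigma$, as described above.
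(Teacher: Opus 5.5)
Your proof is correct. It uses the same two ingredients as the paper: \Cref{cor: shrinking lemma any time} to make $\{u<\lambda_1 k\}$ small on an enlarged cylinder, followed by \Cref{lemma: de giorgi lemma} with $\mu^-=0$. The difference is how you localize.

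The paper avoids any covering. It fixes $1<\tau\le\min\{\sqrt\sigma,2\}$ with $\tau^{-p}\ge 1-2^{-p}$ and applies the De Giorgi--DiBenedetto lemma once, on
$$Q^-_{\tau r,\delta\tau^{-p}}(x_0,t_0+\delta r^p)=B(x_0,\tau r)\times(t_0,t_0+\delta r^p),$$
with spatial enlargement factor $\tau$. The inner cylinder is $B(x_0,r)\times(t_0+(1-\tau^{-p})\delta r^p,t_0+\delta r^p)$, and it already contains the target set. The density condition is therefore needed only on $B(x_0,\tau r)$. It follows from doubling and \Cref{cor: shrinking lemma any time} run with $\sigma$ replaced by $\tau$, which is where $\tau^2\le\sigma$ enters.

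You instead keep the factor $2$ in the De Giorgi lemma. You pay for it with small backward cylinders $B(y,\rho)\times(s-\delta'\rho^p,s)$, a doubling comparison $\nu(Q)\le C\,\nu(\text{small cylinder})$ to transfer the smallness, and a covering of the target set. Your route is more modular, since you need not tune the enlargement factor against $2^{-p}$. The paper's route is shorter and gives the conclusion in a single application.

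Two small points in your write-up:
\begin{itemize}
\item No proofs need to be rerun. \Cref{cor: shrinking lemma any time} applies verbatim on the ball $B(x_0,r')$ with parameters $\sigma r/r'>1$ and $\delta(r/r')^p$, since the resulting cylinder is exactly $B(x_0,\sigma r)\times[t_0,t_0+\delta r^p)$.
\item Passing from ``$u\ge\tfrac12\lambda_1 k$ a.e.\ on each small cylinder'' to ``a.e.\ on the target set'' needs a countable family of centres, because an uncountable union of null sets need not be null. A countable dense subset of the target set suffices, and it exists since $X$ is doubling, hence separable. For each target point $(y',s')$, pick a centre $(y,s)$ with $d(y,y')<\rho/2$ and $s'<s<s'+\delta'(\rho/2)^p$.
\end{itemize}
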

\begin{proof}
Let $h>0$ and let $1<\tau \leq \min\{\sqrt{\sigma},2\}$ with $\frac{1}{\tau^p}\geq (1-2^{-p})$. Applying \Cref{lemma: de giorgi lemma} with the parameters $\frac{\delta}{\tau^p}$, $\tau$ and $a=\frac{1}{2}$, there exists a constant $\gamma$, depending on $c_\mu$, $c_P$, $p$, $\alpha$ and $\delta$, such that if 
\begin{equation*}
    \nu(\{ u < h \}\cap Q_{\tau r,\frac{\delta}{\tau^p}}^-(x_0,t_0+\delta r^p))\leq \gamma \nu(Q_{\tau r,\frac{\delta}{\tau^p}}^-(x_0,t_0+\delta r^p)),
\end{equation*}
then $u \geq \frac{1}{2}h$ $\nu$-almost everywhere in $B(x_0,r)\times(t_0+2^{-p}\delta r^p,t_0+\delta r^p)$.
On the other hand \eqref{eq: assumption in exp of pos} implies that
\begin{equation*}
    \mu(\{u(t_0)<k\}\cap B(x_0,\tau r))\leq \Bigl(1-\frac{1-\alpha}{c_\mu}\Bigr)\mu(B(x_0,\tau r)),
\end{equation*}
and hence \Cref{cor: shrinking lemma any time} shows that 
\begin{equation*}
    \nu(\{ u < \lambda k  \}\cap Q_{\tau r,\frac{\delta}{\tau^p}}^-(x_0,t_0+\delta r^p))\leq \gamma \nu(Q_{\tau r,\frac{\delta}{\tau^p}}^-(x_0,t_0+\delta r^p)),
\end{equation*}
for some $\lambda$ as in \Cref{cor: shrinking lemma any time}. 
The claim follows by the beginning of the proof with $h=\lambda k$.
\end{proof}

\begin{corollary}
\label{cor: slightly adjusted exp of pos}
Assume that $\mu$ is doubling and that a $(1,p)$-Poincar\'e inequality holds in $(X,d,\mu)$.
Let $k>0$, $0<\alpha<1$, $\delta>0$ and $\sigma>1$. 
Assume that $u$ is a nonnegative supersolution of \eqref{eq: PDE} in $B(x_0,\sigma r)\times(t_0-\delta r^p,t_0+\delta r^p)$ and that
\begin{equation}
    \label{eq: slightly changed assumption for exp of pos}
    \nu( \{u<k\}\cap Q^-_{r,\delta}(x_0,t_0)) \leq \alpha \nu(Q^-_{r,\delta}(x_0,t_0)).
\end{equation}
Then there exists a constant $\lambda$, depending on $c_\mu$, $c_P$, $p$, $\alpha$, $\delta$ and $\sigma$, such that $u \geq \lambda k$ almost everywhere in $B(x_0,r)\times(t_0+2^{-p}\delta r^p,t_0+\delta r^p)$.    
\end{corollary}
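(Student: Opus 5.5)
The plan is to reduce the statement to \Cref{prop: expansion of pos} by locating a good time slice in the lower cylinder. The space-time measure information in \eqref{eq: slightly changed assumption for exp of pos} will be converted into slice information at some time $s$ near the beginning of $Q^-_{r,\delta}(x_0,t_0)$, and expansion of positivity will then be applied forward from $s$.

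\textbf{Step 1: finding a good slice.} By Fubini, the function $t\mapsto \mu(\{u(t)<k\}\cap B(x_0,r))/\mu(B(x_0,r))$ has average at most $\alpha$ over $(t_0-\delta r^p,t_0)$. Restrict attention to the initial portion $I=(t_0-\delta r^p,t_0-(1-\theta)\delta r^p)$ with $\theta$ small. The average over $I$ is then at most $\alpha/\theta$, which is useless if $\alpha/\theta\ge 1$.

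So I would instead use a Chebyshev argument. The set of times where the slice density exceeds $\alpha'=(1+\alpha)/2<1$ has measure at most $\frac{\alpha}{\alpha'}\delta r^p$. Consequently the good times, where the density is at most $\alpha'$, have measure at least $(1-\alpha/\alpha')\delta r^p=\frac{1-\alpha}{1+\alpha}\delta r^p>0$. All these good times lie in $(t_0-\delta r^p,t_0)$, and they can be chosen to be Lebesgue points of $u$.

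However, a good time $s$ could be close to $t_0$, and then the forward cylinder from $s$ would not cover the whole target region. This is the main obstacle, and it is handled in Step 3.

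\textbf{Step 2: expansion of positivity from $s$.} Apply \Cref{prop: expansion of pos} at such an $s$ with a time length $\delta' r^p$, where $\delta'=t_0+\delta r^p-s\in(\delta,2\delta)$. The required cylinder $B(x_0,\sigma r)\times[s,t_0+\delta r^p)$ lies inside the domain of $u$. This gives $u\ge\lambda k$ on $B(x_0,r)\times(s+2^{-p}\delta' r^p,t_0+\delta r^p)$. Here $\lambda$ depends on $\delta'$, but the dependence can be made uniform for $\delta'\in[\delta,2\delta]$.

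\textbf{Step 3: covering the full target interval.} We need $s+2^{-p}\delta' r^p\le t_0+2^{-p}\delta r^p$. With $\delta'=t_0+\delta r^p-s$ this becomes $(1-2^{-p})(s-t_0)\le 0$, which holds since $s<t_0$. So the obstacle in Step 1 dissolves: any good $s\in(t_0-\delta r^p,t_0)$ works.

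\textbf{Step 4: uniform constants.} The only remaining care is that $\lambda$ from \Cref{prop: expansion of pos} is uniform in $\delta'\in[\delta,2\delta]$. To get this, I would trace \Cref{cor: shrinking lemma any time}, whose iteration count depends monotonically on $\delta'$. Alternatively, I would apply the proposition with the fixed parameter $2\delta$ and then note that the conclusion restricted to the shorter interval still holds. The latter can be arranged by applying it on $[s,s+2\delta r^p)$ when that lies in the domain.

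If domain constraints prevent this, I would split the argument: first propagate positivity via \Cref{lemma: propagation of pos in meas} to a time slice in a fixed relative position, and then apply \Cref{prop: expansion of pos} with a fixed $\delta$.
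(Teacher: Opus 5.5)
Your proposal is correct and follows the paper's proof: choose a slice $s\in(t_0-\delta r^p,t_0)$ where $\{u(s)<k\}$ has small density, and apply \Cref{prop: expansion of pos} with time length $t_0+\delta r^p-s\in(\delta r^p,2\delta r^p)$, using $s<t_0$ to cover the target interval. The paper gets density $\le\alpha$ directly by averaging, so your Chebyshev step is harmless but unnecessary. The paper also takes for granted that $\lambda$ is uniform in this time parameter over $[\delta,2\delta]$, which is the first option in your Step 4; the alternative of using the fixed length $2\delta r^p$ never applies, since $[s,s+2\delta r^p)$ always leaves the domain of $u$.
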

\begin{proof}
By assumption, there exists $t_0-\delta r^p<t<t_0$ such that
\begin{equation*}
    \mu( \{u(t)<k\}\cap B(x_0,r)) \leq \alpha \mu(B(x_0,r)).
\end{equation*}
The claim follows from \Cref{prop: expansion of pos} with a parameter $\delta$ depending on $t$.
\end{proof}
\subsection{Parabolic Harnack inequality}
Finally we apply the expansion of positivity property and the De Giorgi--DiBenedetto lemma to prove the parabolic Harnack inequality.
\begin{theorem}\label{thm: PHI}
Assume that $\mu$ is doubling and that a $(1,p)$-Poincar\'e inequality holds in $(X,d,\mu)$.
Assume that $u$ is a nonnegative solution of \eqref{eq: PDE} in $Q=B(x_0,4r)\times (t_0-(4r)^p,t_0+(4r)^p)$.
Then there exists a constant $c_H>0$, depending on $c_\mu$, $c_P$ and $p$, such that
\begin{equation*}
    \esssup_{Q^-} u \leq c_H \essinf_{Q^+} u,
\end{equation*}
where $Q^-=B(x_0,r)\times (t_0- r^p,t_0-2^{-p}r^p)$
and $Q^+=B(x_0,r)\times (t_0+2^{-p}r^p,t_0+r^p)$.
\end{theorem}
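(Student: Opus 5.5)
The plan is to follow the DiBenedetto--Gianazza--Vespri approach for the parabolic $p$-Laplacian, adapted to the homogeneous equation \eqref{eq: PDE}. The key structural fact is that \eqref{eq: PDE} is homogeneous: if $u$ is a solution, so is $\lambda u$ for every $\lambda>0$. This lets us normalize levels freely, and the time scaling is independent of the size of $u$. We do not need to add constants to $u$; only the ``$\mu^-$'' versions of the De Giorgi lemma enter.

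\textbf{Step 1 (pointwise supremum).} Let $M=\esssup_{Q^-}u$, which is finite by \Cref{prop: loc bdd of subsol}. We may assume $M>0$ and, by homogeneity, normalize so that $M=1$. We then locate a point $(\bar x,\bar t)$ near $Q^-$ and a radius $\rho$ on which $u$ is large in measure. I would do this with the DiBenedetto--Gianazza--Vespri trick:
\begin{itemize}
\item For $s\in[0,1)$ consider the nested cylinders $Q_s=B(x_0,(1+s)r)\times(t_0-((1+s)r)^p,\,t_0-2^{-p}r^p)$ inside $Q$.
\item Set $M_s=\esssup_{Q_s}u$ and $N_s=(1-s)^{-\beta}$, with $\beta$ large to be chosen.
\item The function $s\mapsto M_s-N_s$ is $\le 0$ near $s=1$ but may be positive at $s=0$. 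Take $s_0$ to be the largest root of $M_s=N_s$ (or use $M_0\le N_0$ otherwise), so that $M_{s_0}=N_{s_0}$ and $M_s<N_s$ for $s>s_0$.
\end{itemize}
Then pick $(\bar x,\bar t)\in Q_{s_0}$, by \Cref{cor: lower semicont of supersolutions} applied to $-u$ or via Lebesgue points, with $u$ essentially at least $\tfrac12 N_{s_0}$ near it. On the cylinder of radius $\rho=\tfrac{1-s_0}{2}r$ around $(\bar x,\bar t)$ we have $u\le N_{(1+s_0)/2}=2^\beta N_{s_0}$.

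\textbf{Step 2 (measure density of large values).} Apply the De Giorgi lemma in reversed form to the subsolution $u$ on $Q^-_{\rho,\delta}(\bar x,\bar t)$, i.e.\ the analogue of \Cref{lemma: de giorgi lemma} for $(u-k)_+$, proven identically from \Cref{lemma: caccioppoli with cutoff}. The upper bound $u\le 2^\beta N_{s_0}$ plays the role of $\mu^+$, and $k$ is comparable to $N_{s_0}$, so the hypothesis $|\mu^+|\le Mk$ holds with $M=2^{\beta+1}$. Since $u(\bar x,\bar t)\ge \tfrac12 N_{s_0}$ essentially, $u$ cannot be $\le \mu^+-a k$ almost everywhere near the vertex. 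Hence $\nu(\{u> \tfrac14 N_{s_0}\}\cap Q^-_{\rho,\delta}(\bar x,\bar t))\ge\gamma\,\nu(Q^-_{\rho,\delta})$ for a $\gamma$ depending on $\beta$ and the data. Equivalently, \eqref{eq: slightly changed assumption for exp of pos} holds with $\alpha=1-\gamma$ and $k=\tfrac14N_{s_0}$.

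\textbf{Step 3 (expansion of positivity to $Q^+$).} Apply \Cref{cor: slightly adjusted exp of pos} to get $u\ge\lambda N_{s_0}/4$ on $B(\bar x,\rho)\times(\bar t+2^{-p}\delta\rho^p,\bar t+\delta\rho^p)$. Then iterate \Cref{prop: expansion of pos} along a chain of balls of doubling radii $2^j\rho$, using doubling and the geodesic property. After about $\log_2(r/\rho)\approx\log_2\frac{1}{1-s_0}$ steps we cover $B(x_0,r)$ at times in $(t_0+2^{-p}r^p,t_0+r^p)$. The time scaling is fixed, so the time windows can be matched by choosing the $\delta$'s; this is possible because $\bar t+\delta\rho^p\le t_0$ and the final window has length of order $r^p$. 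Each step loses a factor $\lambda_0$, giving
\[
\essinf_{Q^+}u\ge c\,\lambda_0^{c\log_2\frac{1}{1-s_0}}N_{s_0}=c(1-s_0)^{c'}(1-s_0)^{-\beta}.
\]
Choosing $\beta\ge c'$, and using $N_{s_0}\ge N_0=1=M$, yields $\essinf_{Q^+}u\ge c\,M$.

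\textbf{Main obstacle.} The hardest part will be Step 3. The constant $\lambda_0$ must be independent of $\beta$, so that $\beta$ can be chosen afterward. In addition, the chain of expansions must land exactly in the prescribed window $Q^+$, with time lag $\sim r^p$ and all cylinders inside $B(x_0,4r)\times(t_0-(4r)^p,t_0+(4r)^p)$. I would handle this by first expanding at fixed $\delta$ (one unit) to reach radius $\sim r$, whose constant does not depend on $\beta$. Only then would I use \Cref{prop: expansion of pos} with a $\delta$ adapted to the remaining time gap, which is bounded above and below by multiples of $r^p$.
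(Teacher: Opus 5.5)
Your proposal is correct and is essentially the paper's argument: the DiBenedetto--Gianazza--Vespri comparison of a growing essential supremum with $(1-s)^{-\beta}$, the De Giorgi lemma at a near-maximal point (the paper applies it to the supersolution $-u$) to get a $\beta$-dependent measure density, and then iterated expansion of positivity whose per-step loss is independent of $\beta$ and is absorbed by the choice of $\beta$, with $\delta$ adapted to land in the time window of $Q^+$.

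The differences are minor, but one of them forces a change in your Step 3. The paper works pointwise at $z_0\in Q^-$ with cylinders of radius $<\frac r4$, so $d(\tilde x,x_0)\le\frac54 r$ and it can expand concentrically with ratio $\tau=\sqrt{10/9}$. Your normalization by $\esssup_{Q^-}u$ instead lets $\bar x$ lie nearly $2r$ from $x_0$. A ball about such $\bar x$ that is guaranteed to contain $B(x_0,r)$ has radius about $3r$, and its $\sigma$-enlargement need not lie in $B(x_0,4r)$. So you must either re-center along a geodesic toward $x_0$, or shrink your family $Q_s$ as the paper does. In addition, the fixed $\delta$ used in the chain must be a small universal constant rather than $1$, so that the chain ends before $t_0$ and the final adapted $\delta$ stays in a bounded range.
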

\begin{proof}
We show that $u(z_0)\leq c\essinf_{Q^+} u$
for almost every $z_0\in Q^-$. If $u(z_0)=0$ we are done. Otherwise, we consider $v(z)=\frac{u(z)}{u(z_0)}$.
Let 
\[
m(\rho) = \esssup_{Q_\rho^{-}(z_0)} v
\quad\text{and}\quad 
n(\rho)=\biggl(1-\frac{4\rho}{r}\biggr)^{-\beta}
\]
for $0\le\rho<\frac r4$. By \Cref{cor: lower semicont of supersolutions}, we may assume that $m(0)=1=n(0)$. Let
\begin{equation*}
    r_0 = \sup \{ 0<\rho<r: m(\rho)\geq n(\rho) \}.
\end{equation*}
Note that $r_0<\frac r4$ since $m$ is bounded and $n(\rho)\to\infty$ as $\rho\to \frac r4$. By upper semicontinuity of $m-n$ we have $m(r_0)\geq n(r_0)$. Thus there exists $\Tilde z=(\Tilde x, \Tilde t)\in Q_{r_0}^-(z_0)$ such that 
\begin{equation}
    \label{eq: sup almost realized for exp of pos proced}
    v(\Tilde z)\geq \tfrac{3}{4}m(r_0) \geq \tfrac{3}{4}n(r_0).
\end{equation}
We have $d(x_0,\Tilde x)\leq \frac{5}{4}$. Let $\rho= \frac{\frac r4-r_0}{2}$. Then $Q^-_{\rho}(\Tilde z)\subset Q^-_{\frac{r_0+\frac r4}{2}}(z_0)$ and by definition of $r_0$ we obtain
\begin{equation*}
    \esssup_{Q_{\rho}(\Tilde z)} v \leq \esssup_{Q_{\frac{\frac r4+r_0}{2}}(z)} v \leq n\biggl(\frac{\frac r4+r_0}{2}\biggr)=2^{\beta} n(r_0).
\end{equation*}
We claim that 
\begin{equation}
    \label{eq: measure density dep on beta}
    \nu( \{ v > \tfrac{1}{4}n(r_0) \}\cap Q_{\rho}^-(\Tilde z)) \geq \gamma_0\nu(Q^-_{\rho}(\Tilde z))
\end{equation}
for some $\gamma_0$ depending only on $c_\mu,c_P,p$ and $\beta$. 
If \eqref{eq: measure density dep on beta} does not hold, then we have
\begin{equation*}
    \nu(\{ 2^\beta n(r_0)-v < 2^\beta n(r_0)-\tfrac{1}{4}n(r_0)\}\cap Q^-_{\rho}(\Tilde z)) \leq \gamma_0 \nu(Q_{\rho}^-(\Tilde z)).
\end{equation*}
Applying \Cref{lemma: de giorgi lemma} to the supersolution $-u$, with parameters 
\[
\mu^-=-2^\beta n(r_0), 
\quad a=\frac{2^\beta-\frac{1}{2}}{2^\beta-\frac{1}{4}},\quad\sigma=2
\quad\text{and}\quad M=\frac{2^\beta}{2^\beta-\frac{1}{4}},
\]
we find that 
\begin{equation*}
    2^\beta n(r_0)-v(\Tilde z) 
    \geq a( 2^\beta n(r_0)-\tfrac{1}{4}n(r_0))=2^{\beta} n(r_0)-\tfrac{1}{2}n(r_0),
\end{equation*}
for $\gamma_0>0$ small enough, depending only on $c_\mu$, $c_P$, $p$, and $\beta$. It follows that $v(\Tilde z) \leq \frac{1}{2}n(r_0)$,
which contradicts \eqref{eq: sup almost realized for exp of pos proced}. Thus, we conclude that \eqref{eq: measure density dep on beta} holds. 

This allows us to apply \Cref{cor: slightly adjusted exp of pos} and obtain that 
\begin{equation}
    \label{eq: first expanded pos}
    v \geq \lambda_0 n(r_0) 
    \text{ in } 
    B(\Tilde x, \rho) \times (\Tilde t+2^{-p}\rho^p,\Tilde t+ \rho^p),
\end{equation}
for some $\lambda_0$ depending on $c_\mu, c_P, p$ and $\beta$. Let $0<\delta\leq 1$ to be chosen later, and set $\tau=\sqrt{\frac{10}{9}}$. From \eqref{eq: first expanded pos} and the doubling condition we conclude that
\begin{equation*}
    \nu( \{ v>\lambda_0 n(r_0)\} \cap Q^-_{\tau\rho, \delta}(\Tilde x,\Tilde t + \rho^p) ) \geq \gamma \nu(Q^-_{\tau\rho}(\Tilde x,\Tilde t + \rho^p)), 
\end{equation*}
where $\gamma=\frac{1}{c_\mu 2^{p}}$. Let $\lambda$ be the constant in \Cref{cor: slightly adjusted exp of pos} corresponding to $\alpha=\gamma$, $\delta$ and $\sigma = \tau$. Applying \Cref{cor: slightly adjusted exp of pos} we find that 
\begin{equation*}
    v \geq \lambda \lambda_0 n(r_0) 
    \text{ in } 
    B(\Tilde x, \tau \rho) \times (\Tilde t + \rho^p+\delta 2^{-p}(\tau\rho)^p, \Tilde t +\rho^p+\delta(\tau\rho)^p).
\end{equation*}
This in turn implies 
\begin{equation*}
    \nu( \{ v>\lambda \lambda_0 n(r_0)\} \cap Q^-_{\tau^2\rho, \delta}(\Tilde x,\Tilde t + \rho^p +\delta (\tau\rho)^p) ) \geq \gamma \nu(Q^-_{\tau^2\rho, \delta}(\Tilde x,\Tilde t + \rho^p +\delta (\tau\rho)^p)).
\end{equation*}
Continuing recursively we find that 
\begin{equation}
    \label{eq: iterated expansion of pos}
    v \geq \lambda^n \lambda_0 n(r_0) \text{ in } B(\Tilde x,\tau^n\rho)\times\Bigl(\Tilde t +\rho^p + \delta\sum_{i=1}^{n-1}\tau^{ip}+\delta 2^{-p}\tau^{np},\Tilde t +\rho^p +\delta\sum_{i=1}^{n}\tau^{ip}\Bigr).
\end{equation}
Let $n$ be such that $\tau^{n-1}\rho\leq \frac{9}{4}r\leq \tau^n \rho$. Since $d(\Tilde x, x_0)\leq \frac{5}{4}r$ we have $B(\Tilde x, \tau^{n+1}\rho)\subset B(x_0,4r)$, and hence the applications of \Cref{cor: slightly adjusted exp of pos} were justified. Moreover, we have $B(x_0,r)\subset B(\Tilde x, \frac{9}{4}r)\subset B(\Tilde x, \tau^{n}\rho)$, so that \eqref{eq: iterated expansion of pos} implies
\begin{equation}
    \label{eq: expanded pos before choice of time}
    v \geq \lambda^n\lambda_0 n(r_0) \text{ in } B(x_0,r)\times\Bigl(\Tilde t +\rho^p + \delta\sum_{i=1}^{n-1}\tau^{ip}+\delta 2^{-p}\tau^{np},\Tilde t +\rho^p +\delta\sum_{i=1}^{n}\tau^{ip}\Bigr).
\end{equation}
Let $t_m\in (t_0+2^{-p}r^p, t_0+r^p)$ be such that 
\begin{equation}
    \label{eq: choosing minimum time}
     \essinf_{B(x_0,r)} v(t_m) \leq 2\essinf_{Q^+} v. 
\end{equation}
We choose $\delta$ so that $\Tilde t +\rho^p+\rho^p \delta\sum_{i=1}^{n}\tau^{ip}=t_m$. Using elementary inequalities for geometric sums and the choice of $n$, we obtain
\begin{equation*}
    (2r)^p\leq \Bigl(\frac{9}{4}r\Bigr)^p\leq \rho^p \tau^{pn}\leq \rho^p\sum_{i=1}^{n} \tau^{ip} \leq \rho^p\tau^{p(n+1)}\leq \Bigl(\frac{9}{4}r \tau^2\Bigr)^p = \Bigl(\frac{10}{4} r\Bigr)^p.
\end{equation*}
Moreover, we have $2^{-p}r^p\leq t_m-\Tilde t - \rho^p \leq 2 r^p$. Thus we may deduce that $5^{-p}\leq \delta \leq 1$,
so that all constants are bounded independent of the choice of $\delta$. From \eqref{eq: expanded pos before choice of time} and \eqref{eq: choosing minimum time} and our choice of $\delta$ we conclude that
\begin{equation*}
    2\essinf_{Q^+}v \geq \lambda^n\lambda_0 n(r_0).
\end{equation*}
Moreover, we have
\begin{equation*}
    n(r_0)=\Bigl( \frac{8\rho}{r}\Bigr)^{-\beta} \geq 18^{-\beta}\tau^{n\beta}.
\end{equation*}
If $\beta=-\log_{\tau}\lambda$, then
\begin{equation*}
    2\essinf_{Q^+}v \geq \lambda^n \lambda_0 n(r_0) \geq 18^{-\beta} \lambda_0.
\end{equation*}
Scaling back to $u$ we have $u(z_0)\leq c\essinf_{Q^+} u$.
The claim follows by taking supremum over $z_0\in Q^-$.
\end{proof}

\section{Existence for a Cauchy problem}
\label{section: existence}
We consider the Cauchy problem associated with the doubly nonlinear equation \eqref{eq: PDE}, formally written as
\begin{equation}
\label{eq: Cauchy problem}
    \begin{cases}
       \partial_t (\lvert u\rvert^{p-2} u)-\div (\lvert \nabla u\rvert^{p-2}\nabla u )=0&\mbox{in }X\times(0,T),\\
       u(x,0)= u_0(x) &\mbox{in } X.
     \end{cases}
\end{equation}
in a metric measure space setting.
Solutions of \eqref{eq: Cauchy problem} are understood in the variational sense.
For signed functions we apply the convention \eqref{eq: signconv}.

\begin{definition}
Let $u_0\in L^p(X)$, $1<p<\infty$. A function $u\in L^p(0,T;N^{1,p}(X))\cap C([0,T];L^p(X))$ is a  solution of \eqref{eq: Cauchy problem} if $u(0)=u_0$ and 
\begin{equation}
    \label{eq: var ineq for Neumann}
   p\int_{X_T} u^{p-1}\partial_t{\varphi}\dnu  + \int_{X_T} g_u^p\dnu \leq \int_{X_T} g_{u+\varphi}^p\dnu,
\end{equation}
for every $\varphi\in L^p(0,T;N^{1,p}(X))$ with $\partial_t \varphi \in L^p(\Omega_T)$ and with compact support in time.
\end{definition}
We introduce the following compactness condition, which will be used to prove the existence of solutions to \eqref{eq: Cauchy problem}. This criterion will be used to control the limit of powers of solutions to the discretized problem, which appear in the proof of \Cref{thm: Cauchy existence} for $p\neq 2$. We have chosen this criterion since it holds on our example of Riemannian manifolds, but different conditions guaranteeing existence could be imposed as well.
\begin{definition}
    \label{def: rellich kondrachov property}
    Let $1<p<\infty$. We say that the Rellich--Kondrachov property holds in $(X,d,\mu)$, if there exists an open cover $U_i$, $i\in I$, of $X$, such that the embedding $N^{1,p}(U_i)\hookrightarrow L^p(U_i)$ is compact for each $i\in I$.
\end{definition}

Next we establish the existence of solutions to the Cauchy problem \eqref{eq: Cauchy problem} under the assumption that the Rellich--Kondrachov property holds in $(X,d,\mu)$ and that the Newton--Sobolev space $N^{1,p}(X)$ is reflexive. The reflexivity of $N^{1,p}(X)$ is applied to conclude strongly measurability of the solution $u$ as a function $u\in L^p(0,T;N^{1,p}(X))$. As a by-product we also obtain mass conservation inequalities for solutions and their upper gradients.
\begin{theorem}\label{thm: Cauchy existence}
Assume that $N^{1,p}(X)$ is reflexive, and for $p\neq 2$ assume that the Rellich--Kondrachov property from \Cref{def: rellich kondrachov property} holds. Assume that $u_0 \in N^{1,p}(X)$ is a nonnegative function. Then there exists a nonnegative solution $u$ of \eqref{eq: Cauchy problem}, which satisfies
\begin{equation}
    \label{eq: mass conserv}
    \int_{X} u(t)^{p-1}\dmu \leq \int_{X} u_0^{p-1}(t)\dmu
\end{equation}
and
\begin{equation}
    \label{eq: neumann problem energy decrease}
    \int_X g_{u(t)}^p\dmu \leq \int_{B} g_{u_0}^p\dmu
\end{equation}
for every $0<t\le T$.
\end{theorem}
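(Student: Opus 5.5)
We construct the solution of \Cref{thm: Cauchy existence} by the minimizing movements (implicit Euler) scheme. Fix $N\in\mathbb N$, let $h=T/N$, set $u_0^h=u_0$, and define recursively $u_i^h\in N^{1,p}(X)$, $i=1,\dots,N$, as a minimizer of
\begin{equation*}
    F_i(w)=\int_X g_w^p\dmu + \frac{p}{h}\int_X \Bigl(\frac{1}{p}\lvert w\rvert^p - (u_{i-1}^h)^{p-1}w\Bigr)\dmu
\end{equation*}
over $w\in N^{1,p}(X)$. The functional is strictly convex and coercive, since the time term is bounded below by $c\int\lvert w\rvert^p - c\int (u_{i-1}^h)^p$. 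Existence of a minimizer then follows from reflexivity of $N^{1,p}(X)$ together with weak lower semicontinuity of $w\mapsto \int g_w^p$, which holds because the minimal $p$-weak upper gradient behaves well under convergence via Mazur's lemma. Replacing $w$ by $w_+$ does not increase $F_i$ when $u_{i-1}^h\ge0$, so by induction the minimizers are nonnegative.

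Next we derive the discrete a priori estimates. Comparing $F_i(u_i^h)\le F_i(u_{i-1}^h)$ and using Young's inequality $(u_{i-1}^h)^{p-1}u_i^h\le \frac{p-1}{p}(u_{i-1}^h)^p+\frac1p(u_i^h)^p$, we obtain $\int g_{u_i^h}^p \le \int g_{u_{i-1}^h}^p$. This gives the discrete version of \eqref{eq: neumann problem energy decrease}. It also gives a bound on $\sum_i h^{-1}\int \bigl((u_i^h)^{p-1}-(u_{i-1}^h)^{p-1}\bigr)(u_i^h-u_{i-1}^h)$, which controls the time oscillation. For the mass inequality \eqref{eq: mass conserv}, the Euler–Lagrange inequality obtained by testing with $\pm\varepsilon\psi$ gives $\int ((u_i^h)^{p-1}-(u_{i-1}^h)^{p-1})\psi\le h\int g_{u_i}^{p-1}g_\psi$. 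Choosing cutoffs $\psi_R\uparrow1$ with $g_{\psi_R}\le 1/R$ and letting $R\to\infty$ gives $\int (u_i^h)^{p-1}\le\int (u_{i-1}^h)^{p-1}$, since the gradient term is $O(R^{-1}\|g_u\|_p^{p-1}\mu(\operatorname{supp} g_{\psi_R})^{1/p})$. This step needs care when $\mu(X)=\infty$; if the bound does not vanish directly, we would first truncate or use lower semicontinuity on balls.

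Then we pass to the limit. We define the piecewise constant interpolants $u^h(t)=u_i^h$ for $t\in((i-1)h,ih]$, and the piecewise linear interpolants of $(u^h)^{p-1}$. These are bounded in $L^\infty(0,T;N^{1,p}(X))$, so a subsequence converges weakly-* to some $u$; reflexivity gives strong measurability. The decisive and hardest step is the \emph{strong} convergence of $(u^h)^{p-1}$ in $L^{p'}_{\mathrm{loc}}$, which is needed to pass to the limit in the nonlinear term $\int u^{p-1}\partial_t\varphi$. For $p=2$ this term is linear and weak convergence suffices. For $p\ne2$ we would use the Rellich--Kondrachov property on each $U_i$ to get compactness in space. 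Combined with the time-oscillation bound above, this should yield an Aubin–Lions/Simon type compactness, giving $u^h\to u$ strongly in $L^p(U_i\times(0,T))$ and a.e., and hence $(u^h)^{p-1}\to u^{p-1}$ in $L^{p'}_{\mathrm{loc}}$.

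Finally, we verify the variational inequality. The minimality of $u_i^h$ against $u_i^h+\varphi(ih)$, summed over $i$, gives a discrete form of \eqref{eq: var ineq for Neumann}, with a discrete summation-by-parts on the time term. Passing to the limit, we use lower semicontinuity for $\int g_u^p$ on the left side. On the right side, $\int g_{u^h+\varphi}^p$ needs an upper bound. We would handle this by testing with $\varphi=v-u^h$ for $v$ fixed, or equivalently by working with the convex formulation, so that the right-hand side is independent of $h$. We would also establish the energy equality in the limit from the discrete energy identity, which forces $\int g_{u^h}^p\to\int g_u^p$.

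Continuity $u\in C([0,T];L^p(X))$ and $u(0)=u_0$ follow from the uniform time-oscillation estimate together with strong convergence. The estimates \eqref{eq: mass conserv} and \eqref{eq: neumann problem energy decrease} pass to the limit by Fatou's lemma and weak lower semicontinuity.
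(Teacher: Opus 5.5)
Your overall architecture matches the paper's. You use the same minimizing-movement functional, get nonnegativity via $w_+$, derive discrete energy and time-oscillation bounds, combine Rellich--Kondrachov with Simon's theorem to identify the limit of $(u^h)^{p-1}$ for $p\neq 2$, and pass to the limit in a formulation with a fixed comparison function.

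The genuine gap is the discrete mass inequality $\int_X (u_i^h)^{p-1}\dmu\le\int_X (u_{i-1}^h)^{p-1}\dmu$. Your Euler--Lagrange inequality $\int_X\bigl((u_i^h)^{p-1}-(u_{i-1}^h)^{p-1}\bigr)\psi\dmu\le h\int_X g_{u_i^h}^{p-1}g_\psi\dmu$ comes from the subadditivity $g_{u+\varepsilon\psi}\le g_u+\varepsilon g_\psi$. Its error term is therefore always nonnegative. With your cutoffs $\psi_R\uparrow 1$ you can only bound it, up to the factor $h$, by $R^{-1}\|g_{u_i^h}\|_{L^p(B(x_0,2R)\setminus B(x_0,R))}^{p-1}\mu(B(x_0,2R))^{1/p}$. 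Nothing in the hypotheses controls $\mu(B(x_0,2R))^{1/p}/R$. On hyperbolic space, which satisfies every assumption of the theorem, this ratio grows exponentially, while the first factor tends to zero at no controlled rate. No volume growth condition may be used at this stage: this very mass inequality is what \Cref{thm: PHI implies doubling} uses to \emph{prove} doubling. Your fallback (``truncate or use lower semicontinuity on balls'') does not, as stated, remove the flux at infinity.

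The missing idea is to compare with a competitor whose upper gradient is known exactly and is pointwise no larger than $g_{u_i^h}$, instead of using the triangle inequality. The paper tests minimality with $(1-s)u_i^h+sv$, uses convexity of $w\mapsto g_w^p$ along this segment, and lets $s\to 0$. This gives
\begin{equation*}
    \int_X g_{u_i^h}^p\dmu+\frac{p}{h}\int_X\bigl((u_i^h)^{p-1}-(u_{i-1}^h)^{p-1}\bigr)(u_i^h-v)\dmu\le\int_X g_v^p\dmu
    \quad\text{for every } v\in N^{1,p}(X).
\end{equation*}
It then chooses $v=u_i^h-\varepsilon\min\{\ell u_i^h,1\}$. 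We have $g_v=g_{u_i^h}$ on $\{\ell u_i^h\ge 1\}$ and $g_v=(1-\varepsilon\ell)g_{u_i^h}$ on $\{\ell u_i^h<1\}$. Hence for $0<\varepsilon\ell\le 1$ the right-hand side is at most $\int_X g_{u_i^h}^p\dmu$, so $\int_X (u_i^h)^{p-1}\min\{\ell u_i^h,1\}\dmu\le\int_X (u_{i-1}^h)^{p-1}\dmu$. Monotone convergence as $\ell\to\infty$ then gives the mass inequality with no cutoff at infinity.

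The same inequality must also replace your plain minimality against $u_i^h+\varphi(ih)$. That version leaves the term $h^{-1}\int_X\bigl(\lvert u_i^h+\varphi(ih)\rvert^p-(u_i^h)^p-p(u_i^h)^{p-1}\varphi(ih)\bigr)\dmu$, which after summation in $i$ is of order $h^{-1}$ rather than vanishing.

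Finally, after the limit in the $v$-formulation you still have to return to \eqref{eq: var ineq for Neumann}. The paper does this by taking $v=\eta(u_\delta+\varphi)$, with the exponential time mollification $u_\delta$, and using the sign of $\partial_t u_\delta\,(u^{p-1}-u_\delta^{p-1})$. The energy equality you suggest as an alternative is not justified by anything you derive.
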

\begin{proof}
Let $k\in\mathbb N$ and let $h_k=\frac{T}{k}$, $t_i= i h_k$ for every $i\in\{0,\dots, k\}$. 
Let $u_{k,0}=u_0$. 
We proceed recursively.
Suppose that, for some $i\in\{1,\dots,k\}$, we have selected a nonnegative function $u_{k,{i-1}}\in L^p(X)$.
We consider the variational integral
\begin{equation*}
    F_{k,i}(u)  = \int_X g_u^p \dmu+ \frac{1}{h_k} \int_X \bigl(\lvert u\rvert^p  -p u_{k,i-1}^{p-1} u\bigr)\dmu,
\end{equation*}
where $u\in N^{1,p}(X)$.
Since $F_{k,i}$ is coercive on $N^{1,p}(X)$, there exists a minimizer $u_{k,i}$. Since $u_{k,i-1}$ is nonnegative, we have $F_{k,i}(u_+)\leq F_{k,i}(u)$ for every $u\in N^{1,p}(X)$. 
Thus, without loss of generality, we may assume that $u_{k,i}$ is nonnegative. 

Let $v\in N^{1,p}(X)$ and $h>0$. By testing the minimizing property with $u_{k,i}+h(v-u_{k,i})=(1-h) u_{k,i}+hv$, and using the convexity of minimal upper gradients, we have
\begin{equation*}
\begin{split}
    &\int_X g_{u_{k,i}}^p \dmu + \frac{1}{h_k}\int_X\bigl(  u_{k,i}^p - p u^{p-1}_{k,i-1} u_{k,i}\bigr)\dmu
    \leq \int_X g_{(1-h)u_{k,i}+h v}^p \dmu\\
    &\qquad\qquad+ \frac{1}{h_k}\int_X\bigl( \lvert (1-h)u_{k,i}+h v\rvert^p - p u^{p-1}_{k,i-1}( (1-h)u_{k,i}+hv)\bigr)\dmu \\
    &\qquad \leq \int_X\bigl( (1-h)g_{u_{k,i}}^p+hg_v^p\bigr) \dmu\\
    &\qquad\qquad+ \frac{1}{h_k}\int_X \bigl(\lvert (1-h)u_{k,i}+h v\rvert^p - p u^{p-1}_{k,i-1}( (1-h)u_{k,i}+hv)\bigr)\dmu.
\end{split}
\end{equation*}
After rearranging the terms, we arrive at
\begin{align*}
    &\int_X g_{u_{k,i}}^p \dmu + \frac{1}{h_k} \int_X \Bigl(\frac{1}{h}( u_{k,i}^p - \lvert (1-h)u_{k,i}+h v\rvert^p) - p u^{p-1}_{k,i-1} ( u_{k,i}-v)\Bigr)\dmu \\
    &\qquad\leq \int_X g_v^p \dmu,
\end{align*}
and by letting $h\to 0$, we conclude that 
\begin{equation}
    \label{eq: tested var ineq for discretization}
    \int_X g_{u_{k,i}}^p \dmu + \frac{p}{h_k}\int_X (u_{k,i}^{p-1} - u_{k,i-1}^{p-1})(u_{k,i}-v)\dmu \leq \int_X g_v^p\dmu.
\end{equation}

Next we apply \eqref{eq: tested var ineq for discretization} to estimate $u_{k,i}$. Let $h>0$, $\ell\in\mathbb N$ and let $v=u_{k,i}-h\min\{\ell u_{k,i},1\}$. Testing \eqref{eq: tested var ineq for discretization} with $v$, we obtain
\begin{equation*}
\begin{split}
    &\int_X g_{u_{k,i}}^p \dmu + \frac{hp}{h_k}\int_X (u_{k,i}^{p-1} - u_{k,i-1}^{p-1}) \min\{\ell u_{k,i},1\}\dmu \\
    &\qquad\leq \int_{X\cap \{\ell u_{k,i}>1\}} g_u^p\dmu + \lvert 1-h\ell\rvert^p\int_{X\cap \{\ell u_{k,i}<1\}} g_u^p\dmu.
\end{split}
\end{equation*}
Rearranging terms gives
\begin{equation*}
    \frac{p}{h_k}\int_X (u_{k,i}^{p-1} - u_{k,i-1}^{p-1}) \min\{\ell u_{k,i},1\}\dmu \leq \frac{1}{h}(\lvert 1-h\ell\rvert^p-1)\int_{X\cap \{\ell u_{k,i}<1\}} g_u^p\dmu.
\end{equation*}
Letting $h\to 0$, we have
\begin{equation*}
    \lim_{h\to 0}\frac{1}{h}(\lvert 1-h\ell\rvert^p-1)\int_{X\cap \{\ell u_{k,i}<1\}} g_u^p\dmu = -p\ell^{p-1}\int_{X\cap \{\ell u_{k,i}<1\}} g_u^p\dmu \leq 0,
\end{equation*}
so that 
\begin{equation*}
    \frac{p}{h_k}\int_X (u_{k,i}^{p-1} - u_{k,i-1}^{p-1}) \min\{\ell u_{k,i},1\}\dmu\leq 0,
\end{equation*}
from which we conclude that
\begin{equation*}
    \int_X u_{k,i}^{p-1}\min\{\ell u_{k,i},1\}\dmu\leq \int_X u_{k,i-1}^{p-1}\dmu.
\end{equation*}
Letting $\ell\to\infty$, we deduce that
\begin{equation*}
    \int_X u_{k,i}^{p-1}\dmu\leq \int_X u_{k,i-1}^{p-1}\dmu.
\end{equation*}
A recursive application of the inequality above shows that
\begin{equation}
    \label{eq: discrete ineq for mass preserv}
    \int_{X} u_{k,i}^{p-1}\dmu \leq \int_{X} u_0^{p-1}\dmu.
\end{equation}

For $u,v\in \mathbb R$, we denote
\begin{equation*}
     b(u,v) = \lvert u\rvert^p-\rvert v\rvert^p-p v^{p-1}(u-v).
\end{equation*}
By \cite[Lemma 2.5]{BogeleinDuzaarMarcelliniScheven}, there exists a constant $c$, depending only on $p$, such that
\begin{equation}
    \label{eq: inequality for b}
    c\lvert u^\frac{p}{2}-v^\frac{p}{2}\rvert^2 \leq  b(u,v) \leq (u^{p-1}-v^{p-1})(u-v)
\end{equation} 
for every $u,v\geq 0$.
Applying \eqref{eq: tested var ineq for discretization} with $v=u_{k,i-1}$ and using \eqref{eq: inequality for b}, we have
\begin{equation}
\label{eq: eq for differences and energy}
    \int_X g_{u_{k,i}}^p \dmu+ \frac{1}{h_k} \int_X b(u_{k,i},u_{k,i-1})\dmu \leq \int_X g_{u_{k,i-1}}^p\dmu.
\end{equation}
A recursive application of \eqref{eq: eq for differences and energy} gives
\begin{equation}
    \label{eq: iterated est for difference quotients}
    \sum_{i=1}^k \frac{1}{h_k}\int_X  b(u_{k,i},u_{k,i-1})\dmu \leq \int_X g_{u_0}^p\dmu.
\end{equation}
Moreover, we may apply \eqref{eq: eq for differences and energy} to conclude that 
\begin{equation}
    \label{eq: discrete energy decrease}
    \int_X g_{u_{k,i}}^p\dmu \leq \int_X g_{u_0}^p\dmu,
\end{equation}
for every $i\in \{0,\dots, k\} $. Using \eqref{eq: tested var ineq for discretization} with $v=0$ and applying Young's inequality, we obtain
\begin{equation*}
\begin{split}
    \int_X g_{u_{k,i}}^p \dmu + \frac{p}{h_k} \int_X \lvert  u_{k,i} \rvert^p \dmu & \leq \frac{p}{h_k} \int_X u_{k,i-1}^{p-1} u_{k,i}\dmu \\
    & \leq \frac{1}{h_k} \int_X \bigl((p-1)\lvert u_{k,i-1}\rvert^{p} + \lvert u_{k,i}\rvert^p\bigr)\dmu,
\end{split}
\end{equation*}
from which it follows that
\begin{equation*}
    \int_X g_{u_{k,i}}^p \dmu + \frac{p-1}{h_k} \int_X \lvert  u_{k,i} \rvert^p \dmu \leq \frac{p-1}{h_k} \int_X\lvert u_{k,i-1}\rvert^{p}\dmu.
\end{equation*}
The previous estimate implies that
\begin{equation}
    \label{eq: bound for gradient and sup in exist}
    \sum_{i=1}^k h_k \int_X g_{u_k,i}^p\dmu + \max_{i\in \{1,\dots,k\}} \int_X \lvert u_{k,i}\rvert^p\dmu\leq c\int_X \lvert u_0\rvert^p\dmu.
\end{equation}

Let $u_k:X\times(-h_k,T]\to\mathbb R$, $u_{k}(\cdot,t)=u_{k,i}$, $(i-1)h_k<t\le ih_k$, $i\in \{0,\dots, k\}$.
From \eqref{eq: bound for gradient and sup in exist} we deduce that 
\begin{equation*}
    \sup_{k\in \mathbb N} \Bigl(\sup_{t\in [0,T]} \int_X \lvert u_k\rvert^p\dmu+ \int_{X_T} g_{u_k}^p\dnu\Bigr) < \infty.
\end{equation*}
Hence, the sequence $(u_k)$ is bounded in $L^\infty(0,T;L^p(X))$ and in $L^p(0,T;N^{1,p}(X))$. By reflexivity of $N^{1,p}(X)$, we conclude that there exists $u\in L^p(0,T;N^{1,p}(X))\cap L^\infty(0,T;L^p(X))$ such that, after passing to a subsequence if necessary, again denoted by $(u_k)$, we have $u_k\rightharpoonup u$ weakly in $L^p(0,T;N^{1,p}(X))$ as $k\to\infty$. 
We claim that $u_k^{p-1}\rightharpoonup u^{p-1}$ weakly in $L^{p'}(X_T)$ as $k\to\infty$,
after passing to a further subsequence if necessary. Note that for $p=2$ this is immediate. For $p\neq 2$, we conclude that there exists $w\in L^{p'}(X_T)$ such that $u_{k}^{p-1}\to w$ weakly in $L^{p'}(X_T)$. 

It remains to show that $w=u^{p-1}$. We will apply Simon's compactness result and verify that the assumptions in \cite[Theorem 1]{Simon1987} are satisfied. From \eqref{eq: iterated est for difference quotients} we have
\begin{equation}
    \label{eq: primitive continuity estimate for existence}
    \sum_{i=1}^k \frac{1}{h_k}\int_X \mathfrak \lvert u_k(t_i)^\frac{p}{2}-u_k(t_{i-1})^\frac{p}{2}\rvert^2\dmu \leq M.
\end{equation}
Let $h>0$. If $h\le h_k$, then 
\begin{equation*}
\begin{split}
    &\int_0^{T-h} \| u_k(t+h)^\frac{p}{2}-u_k(t)^\frac{p}{2}\|_{L^{2}(X)}^2 \dt \\
    &\qquad \leq \sum_{i=1}^{k} \int_{t_i-h}^{t_i} \| u_k(t_{i})^\frac{p}{2}-u_k(t_{i-1})^\frac{p}{2}\|_{L^{2}(X)}^2 
    \leq h h_k M.
\end{split}
\end{equation*}
Then we assume that $h>h_k$. Let $m\in\mathbb N$ be such that $m h_k< h < (m+1)h_k $. For $0<t<T-h$, we apply H\"older's inequality to obtain
\begin{equation*}
\begin{split}
    \| u_k(t+h)^\frac{p}{2}-&u_k(t)^\frac{p}{2} \|_{L^2(X)}
    \leq \sum_{j=1}^{m} \| u_k(t+jh_k)^\frac{p}{2}-u_k(t+(j-1)h_k)^\frac{p}{2} \|_{L^2(X)} \\
    &\qquad+\| u_k(t+h)^\frac{p}{2}- u_k(t+m h_k)^\frac{p}{2}\|_{L^2(X)} \\
    &\leq  m^\frac{1}{2}\Bigl(\sum_{j=1}^m \| u_k(t+jh_k)^\frac{p}{2}-u_k(t+(j-1)h_k)^\frac{p}{2} \|_{L^2(X)}^2\Bigr)^\frac{1}{2}\\
    &\qquad+ \| u_k(t+h)^\frac{p}{2}- u_k(t+m h_k)^\frac{p}{2}\|_{L^2(X)}.
\end{split}  
\end{equation*}
We integrate the estimate above over $0<t<T-h$ and apply $m<\frac{h}{h_k}$ to have
\begin{equation*}
\begin{split}
    &\int_0^{T-h} \| u_k(t+h)^\frac{p}{2}-u_k(t)^\frac{p}{2}\|_{L^{2}(X)}^2 \dt\\ &\qquad\leq \frac{c h}{h_k} \int_{0}^{T-h} \sum_{j=1}^m\| u_k(t+j h_k)^\frac{p}{2}-u_k(t+(j-1)h_k)^\frac{p}{2}\|_{L^{2}(X)}^2\dt \\ 
    & \qquad\qquad + c\int_0^{T-h} \| u_k(t+h)^\frac{p}{2}- u_k(t+m h_k)^\frac{p}{2}\|_{L^2(X)}\dt \\
    & \qquad\leq cTM h + c T M h_k\leq c h. 
\end{split}
\end{equation*}
Hence, we conclude that 
\begin{equation}
    \label{eq: conv of shifts}
    \|u_{k}(t+h)^\frac{p}{2}-u_k(t)^\frac{p}{2}\|_{L^2(0,T-h;L^2(X))}\to 0
\end{equation}
as $h\to0$ uniformly in $k$. 

Let $U_i$, $i\in I$ be a set as in \Cref{def: rellich kondrachov property}. By the compactness of the embedding $N^{1,p}(U_i)\hookrightarrow L^p(U_i)$, \eqref{eq: discrete energy decrease} and \eqref{eq: bound for gradient and sup in exist} we have that $\int_{t_1}^{t_2}u_k(t)^\frac{p}{2}\dt$ belongs to a compact subset of $L^2(U_i)$ for every $0<t_1,t_2<T$ and $k\in\mathbb N$. Thus, we use \eqref{eq: conv of shifts} and apply \cite[Theorem 1]{Simon1987} to conclude that, after passing to a subsequence if necessary, $u_k\to u$ in $L^p((U_i)_T)$ as $k\to\infty$. This shows that $w=u^{p-1}$ in $U_i$. Since $i\in I$ was arbitrary and $X\subset \bigcup_{i\in I} U_i$, the claim follows.

It follows that $u_k^{p-1}\rightharpoonup u^{p-1}$ weakly in $L^1(B)$ as $k\to\infty$ for any ball $B=B(x_0,r)$ in $X$. From \eqref{eq: discrete ineq for mass preserv} we have
\begin{equation*}
    \int_X u_k(t)^{p-1}\dmu\leq \int_X u_0^{p-1}\dmu.
\end{equation*}
Let $\delta>0$. Integrating the above over $\tau<t<\tau+\delta$, we have
\begin{equation*}
    \frac{1}{\delta}\int_{X\times(\tau,\tau+\delta)} u_k^{p-1}\dnu \leq \int_X u_0^{p-1}\dmu.
\end{equation*}
Thus for any ball $B=B(x_0,r)$ we have
\begin{equation*}
    \frac{1}{\delta}\int_{B\times(\tau,\tau+\delta)} u^{p-1}\dnu\leq \liminf_{k\to\infty}\frac{1}{\delta}\int_{X\times(\tau,\tau+\delta)} u_k^{p-1}\dnu \leq \int_X u_0^{p-1}\dmu.
\end{equation*}
Since the ball $B$ was arbitrary we conclude that
\begin{equation*}
    \frac{1}{\delta}\int_{X\times(\tau,\tau+\delta)} u^{p-1}\dnu\leq \int_X u_0^{p-1}\dmu.
\end{equation*}
Letting $\delta\to 0$, we obtain
\begin{equation*}
    \int_{X} u(\tau)^{p-1}\leq \int_X u_0^{p-1}\dmu,
\end{equation*}
which implies \eqref{eq: mass conserv}.

Arguing similarly as before, we conclude that $u_k^\frac{p}{2}\rightharpoonup u^\frac{p}{2}$ weakly in $L^2(X_T)$ as $k\to\infty$. Again, this is immediate if $p=2$. We have shown that 
\begin{equation*}
    w_k(t)=\frac{1}{h_k}\bigl(u_k(t+h_k)^\frac{p}{2}-u_k(t)^\frac{p}{2}\bigr),
\end{equation*}
is bounded in $L^2(X\times(0,T-h_k))$, independently of $k\in\mathbb N$. Hence, after passing to a further subsequence, the sequence $(w_k)$ has a weak limit $w$ in $L^2(X_T)$. For every $\varphi\in C_0^\infty([0,T];L^p(X))$ we have
\begin{equation*}
\begin{split}
    \int_{X_T} u^\frac{p}{2} \partial_t \varphi \dnu & = -\lim_{k\to\infty} \int_{X_T} u_k(t)^\frac{p}{2}\frac{\varphi(t)-\varphi(t-h_k)}{h_k}\dnu \\
    & = -\lim_{k\to\infty} \int_{X_T} \frac{u_k(t+h_k)^\frac{p}{2}-u_k(t)}{h_k} \varphi(t)\dnu \\
    & = -\int_{X_T} w \varphi\dnu,
\end{split}
\end{equation*}
which shows that $\partial_t u^\frac{p}{2}=w\in L^2(X_T)$. This implies $u^\frac{p}{2}\in C([0,T];L^2(X))$ and hence $u\in C([0,T];L^p(X))$.

Let $v\in L^p(0,T;N^{1,p}(X))$ with $\partial_t v\in L^p(X_T)$, and let $0<\tau<T$. Integrating \eqref{eq: tested var ineq for discretization} over $0<t<\tau$ gives
\begin{equation*}
    \int_{X_\tau} g_{u_{k}}^p \dnu + \frac{p}{h_k}\int_{X_\tau} (u_{k}^{p-1} - u_{k}(\cdot-h_k)^{p-1})(u_{k}-v)\dnu 
    \leq \int_{X_\tau} g_v^p\dnu.
\end{equation*}
By Young's inequality 
\begin{equation*}
\begin{split}
    &\frac{p}{h_k}\int_{X_\tau}\bigl(u_k^{p-1}-u_k(\cdot-h_k)^{p-1}(-u_k)\bigr)\dnu\\
    &\qquad= \frac{p}{h_k}\int_{X_T}\bigl( u_{k}(\cdot-h_k)^{p-1} u_k - \lvert u_k\rvert^p\bigr)\dnu \\
    &\qquad \leq \frac{p}{h_k}\int_{X_\tau}\Bigl(\frac{p-1}{p}\lvert u_k\rvert^p(\cdot-h_k) + \frac{1}{p} \lvert u_k\rvert^p - \lvert u_k\rvert^p\Bigr)\dnu \\ 
    &\qquad = \frac{p-1}{h_k} \int_{X_\tau} \bigl(\lvert u_k\rvert^p(\cdot-h_k)-\lvert u_k\rvert^p\bigr)\dnu \\
    &\qquad = (p-1)\int_{X} \lvert u_0\rvert^p\dmu - \frac{p-1}{h_k}\int_{X\times(\tau-h_k,\tau)}\lvert u_k\rvert^p\dnu.
\end{split}
\end{equation*}
We also have
\begin{equation*}
\begin{split}
    &\frac{p}{h_k}\int_{X_\tau}(u_k^{p-1}-u_k(\cdot-h_k)^{p-1})v \dnu\\
    &\qquad= \frac{p}{h_k}\int_{X_\tau} u_k^{p-1} v \dnu -\frac{p}{h_k}\int_{X_\tau} u_k(\cdot-h_k)^{p-1} v\dnu \\
    &\qquad = \frac{p}{h_k} \int_{X_\tau} u_k^{p-1} v \dnu - \frac{p}{h_k}\int_{X\times(-h_k,\tau-h_k)} u_k^{p-1} v(\cdot+h_k)\dnu\\
    &\qquad = - \frac{p}{h_k}\int_{X_\tau}  (v(\cdot+h_k)-v) u_k^{p-1}\dnu-\frac{p}{h_k}\int_{X\times(-h_k,0)} u_0^{p-1}v(\cdot+h_k)\dnu \\
    &\qquad \qquad + \frac{p}{h_k}\int_{X\times(\tau-h_k,\tau)}u_k^{p-1}v(\cdot+h_k)\dnu.
\end{split}
\end{equation*}
The previous estimates imply 
\begin{equation}
\label{eq: est for limiting var ineq and initial val}
\begin{split}
    & \frac{p-1}{h_k}\int_{X\times(\tau-h_k,\tau)}\lvert u_k\rvert^p\dnu+\int_{X_\tau} g_{u_{k}}^p \dnu
    \leq \int_{X_\tau} g_v^p\dnu\\
    &\qquad-\frac{p}{h_k}\int_{X_\tau}  (v(\cdot+h_k)-v) u_k^{p-1}\dnu
    -\frac{p}{h_k}\int_{X\times(-h_k,0)} u_0^{p-1}v(\cdot+h_k)\dnu\\
    &\qquad+ \frac{p}{h_k}\int_{X\times(\tau-h_k,\tau)}u_k^{p-1}v(\cdot+h_k)\dnu.
\end{split}
\end{equation}

Let us first use \eqref{eq: est for limiting var ineq and initial val} to show that $u$ has the correct initial values. We test \eqref{eq: est for limiting var ineq and initial val} with $v=u_0$ and obtain
\begin{equation*}
\begin{split}
    & \frac{p-1}{h_k}\int_{X\times(\tau-h_k,\tau)}\lvert u_k\rvert^p\dnu+\int_{X_\tau} g_{u_{k}}^p \dnu
    \leq \tau \int_{X} g_{u_0}^p\dmu\\
    &\qquad-\int_{X} u_0^{p}\dmu + \frac{p}{h_k}\int_{X\times(\tau-h_k,\tau)}u_k^{p-1}u_0\dnu.
\end{split}
\end{equation*}
Rearranging and using \eqref{eq: inequality for b}, we conclude that
\begin{equation*}
\begin{split}
    \frac{1}{h_k}\int_{X\times(\tau-h_k,\tau)} b(u_0,u_k)\dnu 
    &\leq  \frac{1}{h_k}\int_{X\times(\tau-h_k,\tau)}\bigl((p-1)\lvert u_k\rvert^p-pu_k^{p-1}u_0 + u_0^p\bigr)\dnu \\
    &\leq \tau\int_{X} g_{u_0}^p\dmu  .
\end{split}
\end{equation*}
Let $\delta>0$. Integrating the estimate above over $\tau<t<\tau+\delta$, and using  $ b(u_0,u_k)\geq 0$, we obtain
\begin{equation*}
    \frac{1}{\delta}\frac{1}{h_k}\int_\tau^{\tau+\delta}\int_{X\times(t-h_k,t)} b(u_0,u_k)\dmu\ds\dt \leq \tau\int_{X} g_{u_0}^p\dmu.
\end{equation*}
For $h_k<\delta$, we have
\begin{equation*}
    \frac{1}{\delta}\int_\tau^{\tau+\delta-h_k} b(u_0,u_k)\dnu \leq \tau\int_{X} g_{u_0}^p\dmu.
\end{equation*}
Letting $k\to\infty$ and then $\delta \to 0$, we conclude that
\begin{equation*}
    \int_{X} b(u_0,u(\tau))\dmu \leq \tau\int_{X} g_{u_0}^p\dmu
\end{equation*}
for every $0<\tau<T$. This implies that $ b(u_0,u(t))\to 0 $ as $t\to 0$, which in turn implies that $u(t)\to u_0$ in $L^p(X)$ as $t\to 0$.

Next we apply \eqref{eq: est for limiting var ineq and initial val} to show that \eqref{eq: var ineq for Neumann} holds. Let $\varphi$ be as in \eqref{eq: var ineq for Neumann}. Let $v\in L^p(0,T;N^{1,p}(X))$ with $\partial_t v\in L^p(X_T)$ such that $v$ is compactly supported in $(0,\tau)$. Testing \eqref{eq: est for limiting var ineq and initial val} with $v$ we conclude that
\begin{equation}
\label{eq: var ineq before averaging and limit}
\begin{split}
    & \frac{p-1}{h_k}\int_{X\times(\tau-h_k,\tau)}\lvert u_k\rvert^p\dnu + \int_{X_\tau} g_{u_{k}}^p \dnu
    \leq \int_{X_\tau} g_v^p\dnu\\
    &\qquad+ (p-1)\int_{X}\lvert u_0\rvert^p\dmu - \frac{p}{h_k}\int_{X_\tau}  (v(\cdot+h_k)-v) u_k^{p-1}\dnu,
\end{split}
\end{equation}
whenever $h_k$ is small enough. Let $\delta >0$, and integrate the above over $t_0<\tau<t_0+\delta$ to deduce that
\begin{equation*}
\begin{split}
    & \frac{p-1}{\delta}\int_{X\times(t_0,t_0+\delta-h_k)}\lvert u_k\rvert^p\dnu + \int_{X_\tau} g_{u_{k}}^p \dnu
    \leq \int_{X_{\tau+\delta}} g_v^p\dnu\\
    &\qquad+ (p-1)\int_{X}\lvert u_0\rvert^p\dmu - \int_{t_0}^{t_0+\delta}\frac{p}{h_k}\int_{X_\tau}  (v(\cdot+h_k)-v) u_k^{p-1}\dmu\dt\,d\tau,
\end{split}
\end{equation*}
for $h_k<\delta$. Letting $k\to \infty$ gives
\begin{equation*}
\begin{split}
    & \frac{p-1}{\delta}\int_{X\times(t_0,t_0+\delta)} u^p\dnu + \int_{X_\tau} g_{u}^p \dnu 
    \leq \int_{X_{\tau+\delta}} g_v^p\dnu\\
    &\qquad+ (p-1)\int_{X}\lvert u_0\rvert^p\dmu - \int_{t_0}^{t_0+\delta}\frac{p}{h_k}\int_{X_\tau}  \partial_t v u^{p-1}\dmu\dt\,d\tau,
\end{split}
\end{equation*}
where we used the weak convergence of $u_k^{p-1}\rightharpoonup u^{p-1}$ in $L^{p'}(X_T)$ as $k\to\infty$. Letting $\delta \to 0$ and taking $t_0=\tau$, we conclude that
\begin{equation}
\label{eq: exist var ineq before time moll}
\begin{split}
    & (p-1)\int_{X}u(\tau)^p\dnu + \int_{X_\tau} g_{u}^p \dnu \\
    & \qquad\leq \int_{X_\tau} g_v^p\dnu  + (p-1)\int_{X}\lvert u_0\rvert^p\dmu - p \int_{X_\tau} \partial_t v u^{p-1}\dnu.
\end{split}
\end{equation}
Let $\delta>0$, $\epsilon>0$, and consider a cutoff function
\begin{equation*}
    \eta(t)
        =\begin{cases}
            0 ,&\quad t\leq \epsilon,\\
            \frac{1}{\epsilon}(t-2\epsilon), & \quad \epsilon <t\leq 2\epsilon, \\
            1,&\quad 2\epsilon < t \leq \tau-2\epsilon,\\
            \frac{1}{\epsilon}(\tau-2\epsilon-t), & \quad \tau-2\epsilon<t\leq \tau-\epsilon,\\
            0,&\quad \tau-\epsilon<t.
        \end{cases}
\end{equation*}
Testing \eqref{eq: exist var ineq before time moll} with $v=\eta (u_\delta +\varphi)$, we infer that
\begin{equation}
\label{eq: converting between forms of weak sol}
\begin{split}
    & (p-1)\int_{X}u(\tau)^p\dmu + \int_{X_\tau} g_{u}^p \dnu
    \leq \int_{X_\tau} g_{ \eta u_\delta + \varphi}^p\dnu\\
    &\qquad + (p-1)\int_{X}\lvert u_0\rvert^p\dmu - p \int_{X_\tau} \partial_t (\eta u_\delta +\varphi) u^{p-1}\dnu.
\end{split}
\end{equation}
For the term containing the time derivative, we have
\begin{equation*}
\begin{split}
    &- p \int_{X_\tau} \partial_t (\eta u_\delta +\varphi) u^{p-1}\dnu  = -p \int_{X_\tau} \partial_t \varphi u^{p-1}\dnu-p \int_{X_\tau} \partial_t \eta u_\delta  u^{p-1}\dnu \\
    & \qquad\qquad-p \int_{X_\tau} \eta \partial_t u_\delta  (u^{p-1}-(u_\delta)^{p-1})\dnu -p \int_{X_\tau} \eta \partial_t u_\delta (u_{\delta})^{p-1}\dnu \\
    &\qquad  \leq -p \int_{X_\tau} \partial_t \varphi u^{p-1}\dnu-p \int_{X_\tau} \partial_t \eta u_\delta  u^{p-1}\dnu 
     + \int_{X_\tau} \partial_t\eta (u_\delta)^p \dnu,
\end{split}
\end{equation*}
where in the second inequality we used \eqref{eq: time der of mollification} and an integration by parts. We use the estimate above and \Cref{lemma: convergence of time mollification} to pass $\delta\to 0$ in \eqref{eq: converting between forms of weak sol} and conclude that
\begin{equation}
\label{eq: exist var ineq before eps to 0}
\begin{split}
    & (p-1)\int_{X}u(\tau)^p\dmu + p\int_{X_T} \partial_t\varphi u^{p-1}\dnu + \int_{X_\tau} g_{u}^p \dnu \\
    & \qquad\leq \int_{X_T} g_{ \eta u + \varphi}^p\dnu  + (p-1)\int_{X}\lvert u_0\rvert^p\dmu - (p-1) \int_{X_T} \partial_t \eta u^{p}\dnu.
\end{split}
\end{equation}
By the definition of $\eta$, we have
\begin{equation*}
    (p-1) \int_{X_T} \partial_t \eta u^{p}\dnu= \frac{p-1}{\epsilon} \int_{X\times(\epsilon,2\epsilon)} u^{p}\dnu - \frac{p-1}{\epsilon} \int_{X\times(\tau-\epsilon,\tau-2\epsilon)} u^{p}\dnu.
\end{equation*}
Hence, letting $\epsilon \to 0$, from \eqref{eq: exist var ineq before eps to 0} we conclude that
\begin{equation*}
\begin{split}
    p\int_{X_T} \partial_t\varphi u^{p-1}\dnu + \int_{X_\tau} g_{u}^p \dnu \leq \int_{X_T} g_{ \eta u + \varphi}^p\dnu,
\end{split}
\end{equation*}
which shows that \eqref{eq: var ineq for Neumann} holds. Finally, \eqref{eq: discrete energy decrease} shows that 
\begin{equation*}
    \int_X g_{u_k}^p(t)\dmu\leq \int_{X} g_{u_0}^p\dmu,
\end{equation*}
from which we conclude that \eqref{eq: neumann problem energy decrease} holds.
\end{proof}

\section{Parabolic Harnack implies doubling}
\label{section: PHI implies VD}
Theorem \ref{thm: PHI} asserts that a scale and location invariant parabolic Harnack inequality holds for nonnegative solutions of \eqref{eq: PDE}, under the assumptions that the measure $\mu$ is doubling and that a  $(1,p)$-Poincar\'e inequality holds in $(X,d,\mu)$.
In this section we discuss the converse direction and show that a parabolic Harnack inequality for nonnegative solutions of \eqref{eq: PDE} implies that the measure $\mu$ is doubling. 

\begin{definition}\label{def: PHI}
The parabolic Harnack inequality holds in $(X,d,\mu)$ if there exists a constant $c_H>0$ such that whenever $u$ is a nonnegative solution of \eqref{eq: PDE} in $Q=B(x_0,4r)\times (t_0-(4r)^p,t_0+(4r)^p)$, then
\begin{equation*}
    \esssup_{Q^-} u \leq c_H \essinf_{Q^+} u,
\end{equation*}
where $Q^-=B(x_0,r)\times (t_0- r^p,t_0-2^{-p}r^p)$
and $Q^+=B(x_0,r)\times (t_0+2^{-p}r^p,t_0+r^p)$.
\end{definition}

We apply Theorem \ref{thm: Cauchy existence} that ensures the existence of a solution to the Cauchy problem \eqref{eq: Cauchy problem} in the proof of the result below.

\begin{theorem}
\label{thm: PHI implies doubling}
Assume that the parabolic Harnack inequality holds in $(X,d,\mu)$, that $N^{1,p}(X)$ is reflexive, and for $p\neq 2$ assume that the Rellich--Kondrachov property in \Cref{def: rellich kondrachov property} holds. Then the measure $\mu$ is doubling, with a constant $c_\mu$ depending only on $c_H$ and $p$.
\end{theorem}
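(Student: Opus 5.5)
We mimic the linear argument of Grigor'yan and Saloff-Coste. The role of the heat kernel is played by the nonnegative solution of the Cauchy problem provided by \Cref{thm: Cauchy existence}. Fix $x_0\in X$ and $r>0$. Let $u_0$ be a nonnegative Lipschitz function with $0\le u_0\le 1$, $u_0=1$ on $B(x_0,\epsilon r)$ and $\supp u_0\subset B(x_0,2\epsilon r)$, for a small fixed $\epsilon$. It approximates a Dirac mass at the scale of the small ball. Let $u$ be the corresponding solution on $X\times(0,T)$. Extended by zero, or simply restricted, it is a nonnegative solution in every cylinder $B(x,4\rho)\times(t-(4\rho)^p,t+(4\rho)^p)$ with $t-(4\rho)^p>0$, so the parabolic Harnack inequality applies there. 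The mass inequality \eqref{eq: mass conserv} gives the on-diagonal upper control
\[
\int_X u(t)^{p-1}\dmu\le \mu(B(x_0,2\epsilon r)),
\]
and \eqref{eq: neumann problem energy decrease} controls the energy.

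\textbf{Step 1: a lower bound for the mass in the ball.} We show that for $t\simeq (\epsilon r)^p$ a definite fraction of $\int_X u(t)^{p-1}\dmu$, comparable to $\int u_0^{p-1}\ge\mu(B(x_0,\epsilon r))$, stays in $B(x_0,r)$. Without a representation formula, we would test \Cref{lemma: var sol implies weak sol with inequality} with $\varphi=\psi(x)\chi(t)$, where $\psi$ is a cutoff equal to $1$ on $B(x_0,r/2)$, and estimate $\int g_u^{p-1}g_\psi$ by H\"older's inequality and the energy bound $\int g_{u(t)}^p\le\int g_{u_0}^p\lesssim (\epsilon r)^{-p}\mu(B(x_0,2\epsilon r))$. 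This gives $\int u(t)^{p-1}\psi\ge \int u_0^{p-1}-Ct\cdot(\text{small})$ for short times. Caccioppoli estimates (\Cref{lemma: caccioppoli with cutoff}) for truncated shifts $(u-k)_+$ can replace the crude H\"older step if needed.

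\textbf{Step 2: from Harnack to pointwise bounds.} Apply the parabolic Harnack inequality in cylinders of radius $\rho\simeq\epsilon r$ centred near $x_0$, and iterate it along chains of balls using geodesicity, to transfer information from scale $\epsilon r$ to scale $r$. The mass bound at time $t_1$ and Step 1 give a point of $B(x_0,\epsilon r)$ where $u(t_1)^{p-1}\gtrsim \mu(B(x_0,\epsilon r))/\mu(B(x_0,r))$, after averaging over $B(x_0,r)$. A bounded number of Harnack chains then gives $u\gtrsim$ the same quantity on $B(x_0,2r)\times(t_2,t_3)$ with $t_i\simeq r^p$, at the cost of a factor $c_H^{N}$ with $N$ depending only on $p$. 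Conversely, the mass bound $\int_X u(t_3)^{p-1}\dmu\le\mu(B(x_0,2\epsilon r))$ integrated over $B(x_0,2r)$ yields
\[
\mu(B(x_0,2r))\,\inf_{B(x_0,2r)}u(t_3)^{p-1}\le \mu(B(x_0,2\epsilon r)).
\]

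\textbf{Step 3: comparison.} Combining the two sides gives
\[
\mu(B(x_0,2r))\le C(c_H,p)\,\mu(B(x_0,r)),
\]
possibly after renaming $\epsilon r$ as $r$ and using the Harnack inequality once more to compare the values of $u$ at $B(x_0,r)$ and $B(x_0,2r)$ at comparable times. This is the doubling condition \eqref{eq: doubling condition}.

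\textbf{Main obstacle.} The delicate part is Step 1 combined with Step 2, namely a lower bound on $u$ at a point of scale $r$ in terms of the ratio of ball measures, without heat kernel identities. We must ensure that mass does not escape $B(x_0,r)$ before time $\simeq r^p$. We would first try the weak formulation with the energy decay as above. If that is too lossy for $p\neq2$, we would instead use Caccioppoli inequalities for $(u-k)_+$ localized outside $B(x_0,r/2)$, together with a Harnack-based sup bound, to show that the portion of $u^{p-1}$ outside $B(x_0,r)$ is a small fraction at time $\simeq(\epsilon r)^p$.
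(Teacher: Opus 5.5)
There is a genuine gap, in Step 1 and in the bookkeeping of Step 3.

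The Step 1 mechanism cannot work before doubling is known. With $\int_X g_{u(t)}^p\dmu\le\int_X g_{u_0}^p\dmu\lesssim(\epsilon r)^{-p}\mu(B(x_0,2\epsilon r))$ and H\"older's inequality, the flux of $u^{p-1}$ through the annulus up to time $(\epsilon r)^p$ is only bounded by $c\,\epsilon\,\mu(B(x_0,r))^{1/p}\mu(B(x_0,2\epsilon r))^{(p-1)/p}$. This is small compared with $\int_X u_0^{p-1}\dmu\ge\mu(B(x_0,\epsilon r))$ only if $\mu(B(x_0,r))/\mu(B(x_0,\epsilon r))$ and $\mu(B(x_0,2\epsilon r))/\mu(B(x_0,\epsilon r))$ are controlled, which is what you want to prove. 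This fails for every $p$, including $p=2$. Shrinking the time makes the loss small, but then the time, and with it the length of your Harnack chain, depends on these ratios. Your fallback, showing that the part of $u^{p-1}$ outside $B(x_0,r)$ is small via a Harnack-based sup bound, needs an off-diagonal upper bound for $u$ that nothing in your argument provides.

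Moreover, even granting Step 1, inserting $m=\int_X u_0^{p-1}\dmu\ge\mu(B(x_0,\epsilon r))$ in the lower bound and $m\le\mu(B(x_0,2\epsilon r))$ in the upper bound only gives
\[
\frac{\mu(B(x_0,2r))}{\mu(B(x_0,r))}\le C\,\frac{\mu(B(x_0,2\epsilon r))}{\mu(B(x_0,\epsilon r))}.
\]
This moves the doubling ratio to a smaller scale instead of bounding it. You must keep $m$ as a single quantity that cancels; the paper normalizes $\int_X f^{p-1}\dmu=1$ with $\supp f\subset B(x_0,\frac r8)$.

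The missing idea is that no confinement of mass is needed, because the flux can be controlled by the size of $u$ itself rather than by the initial energy. Extend $u$ by zero for $t<0$. It is then a supersolution in $X\times(-\infty,T)$ and a solution in $(X\setminus\overline{B(x_0,\frac r8)})\times(-\infty,T)$. Test \Cref{lemma: var sol implies weak sol with inequality} with a space-time cutoff $\varphi=\eta(x)\xi(t)$, equal to $1$ on $B(x_0,\frac r4)\times(-(\frac r4)^p,(\frac r4)^p)$ and supported at scale $\frac r2$:
\[
1=\int_X f^{p-1}\eta\dmu\le\int g_u^{p-1}g_\varphi\dnu-\int u^{p-1}\partial_t\varphi\dnu .
\]
Both terms live where $\varphi$ is nonconstant. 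Write $Q_\rho=B(x_0,\rho)\times(-\rho^p,\rho^p)$ and let $M=\esssup_{Q_r\setminus Q_{r/4}}u$. Apply \Cref{lemma: caccioppoli with cutoff} to the supersolution truncation $(M-u)_+$, which satisfies $0\le(M-u)_+\le M$. This gives $\int_{Q_{r/2}\setminus Q_{r/4}} g_u^p\dnu\le c\,\mu(B(x_0,r))M^p$. After H\"older, both terms are at most $c\,\mu(B(x_0,r))M^{p-1}$, so $M\ge c^{-1}\mu(B(x_0,r))^{-1/(p-1)}$, with no measure ratios. Equivalently, your Step 1 becomes a dichotomy: either a fixed fraction of the mass remains in the ball, or $u$ is large on the space-time annulus, and both give the lower bound.

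From there, your Harnack chain forward to $B(x_0,2r)$ at times $\simeq r^p$ and the mass-conservation bound $\essinf_{B(x_0,2r)}u(t)\le c\,\mu(B(x_0,2r))^{-1/(p-1)}$ close the argument as in the paper. Two smaller points: averaging over $B(x_0,r)$ produces a point of $B(x_0,r)$, not of $B(x_0,\epsilon r)$. Also, the first Harnack cylinders must lie in $t>0$ or stay away from $\supp u_0$, where the zero extension is only a supersolution.
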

\begin{proof}
Let $x_0\in X$, $r>0$ and $T=(64r)^p$. Let $f\in N^{1,p}(X)$ be a nonnegative function, with $f=0$ in $X\backslash B(x_0,\frac r8)$ and 
\begin{equation*}
    \int_{X} f^{p-1}\dmu=1.
\end{equation*} 
Let $u$ be the solution of the Cauchy problem \eqref{eq: Cauchy problem}, given by \Cref{thm: Cauchy existence} with $u_0=f$. We extend $u$ as zero for $t<0$, and note that $u$ is a  solution in $(X\backslash \overline{B(x_0,\frac r8)})\times (-\infty,T)$.
We claim that $u$ a  supersolution in $X\times (-\infty,T)$. To see this, let $\varphi\in L^p(0,T;N^{1,p}(X))$ with $\partial_t \varphi\in L^p(X\times(-\infty,T))$ and $\supp\varphi\subset X\times(-\infty,T)$. Let $h>0$ and consider a cutoff function
\begin{equation*}
    \eta_h(t)
        =\begin{cases}
            0, &\quad t\leq 0, \\
            \frac{t}{h} ,&\quad 0 <t < h,\\
            1,&\quad t\geq h.
        \end{cases}
\end{equation*}
Testing \eqref{eq: var ineq} with $\eta_h\varphi$, we obtain
\begin{equation*}
\begin{split}
    &\frac{1}{h}\int_0^h p\int_{X} u^{p-1}\varphi\dnu+p\int_{X\times(0,T)} u^{p-1} \partial_t{\varphi}\eta_h\dnu  + \int_{X\times(0,T)} g_u^p\dnu \\
    &\qquad\leq \int_{X\times(0,T)} g_{u+\varphi}^p\dnu.
\end{split}
\end{equation*}
Letting $h\to 0$ implies
\begin{equation*}
\begin{split}
    &p\int_{X} f^{p-1}\varphi\dmu+p\int_{X\times(-\infty,T)} u^{p-1} \partial_t{\varphi}\dnu + \int_{X\times(-\infty,T)} g_u^p\dnu \\
    &\qquad\leq \int_{X\times(-\infty,T)} g_{u+\varphi}^p\dnu,
\end{split}
\end{equation*}
from which the claim follows.

Denote $Q_r = B(x_0,r)\times (-r^p,r^p)$.
Let $\eta$ be a $\frac{c}{r}$-Lipschitz cutoff function, with $0\leq \eta\leq 1$, $\eta=1$ on $B(x_0,\frac r4)$ and with $\supp\eta\subset B(x_0,\frac r2)$. 
Consider a cutoff function
\begin{equation*}
    \xi(t)
        =\begin{cases}
            \frac{t+(\frac r2)^p}{(1-2^{-p})},&\quad -(\frac r2)^p <t < -(\frac r4)^p,\\
            1,&\quad -(\frac r4)^p\leq t\leq (\frac r4)^p,\\
            \frac{(\frac r2)^p-t}{(1-2^{-p})(\frac r2)^p},&\quad (\frac r4)^p<t\leq (\frac r2)^p,\\
            0,&\quad\text{otherwise}.
        \end{cases}
\end{equation*}
Let $h>0$ and consider a cutoff function
\begin{equation*}
    \zeta_h(t)
        =\begin{cases}
            0 ,&\quad t \leq 0,\\
            \frac{1}{h}(t-h) ,&\quad 0<t < h,\\
            1,&\quad  t\geq h.
        \end{cases}
\end{equation*}
Let $\varphi=\eta\xi$. 
Applying $\varphi \zeta_h$ as test function in \Cref{lemma: var sol implies weak sol with inequality} gives
\begin{equation*}
    \frac{1}{h}\int_{0}^{h}\int_{B(x_0,\frac r2)}u^{p-1}\varphi\dmu \leq  \int_{Q_{\frac r2}} g_u^{p-1}g_\varphi \dnu-\int_{Q_{\frac r2}} u^{p-1}\partial_t \varphi  \dmu.
\end{equation*}
Letting $h\to 0$, we obtain
\begin{equation*}
    \int_{B(x_0,\frac r2)}f^{p-1}\eta\dmu \leq  \int_{Q_{\frac r2}} g_u^{p-1}g_\varphi \dnu-\int_{Q_{\frac r2}} u^{p-1}\partial_t \varphi \dmu,
\end{equation*}
from which it follows that
\begin{equation}
    \label{eq: est from below for doubling}
    1 =  \int_{B(x_0,\frac r2)} f^{p-1}\eta \dmu \leq  \int_{Q_{\frac r2}} g_u^{p-1}g_\varphi \dnu-\int_{Q_{\frac r2}} u^{p-1}\partial_t \varphi \dmu.
\end{equation}
To estimate the first term on the right-hand side of \eqref{eq: est from below for doubling}, we use H\"older's inequality and the fact that $\varphi=1$ on $Q_{\frac r4}$ to obtain
\begin{equation*}
\begin{split}
    \int_{Q_{\frac r2}} g_u^{p-1}g_\varphi \dnu & \leq \frac{c}{r}\int_{Q_{\frac r2}\backslash Q_{\frac r4}} g_u^{p-1} \dnu \\
    & \leq  c \mu(B(x_0,\tfrac r2))^\frac{1}{p} \biggl(\int_{Q_{\frac r2}\backslash Q_{\frac r4}}  g_u^p\dnu\biggr)^{\frac{p-1}{p}}.
\end{split}
\end{equation*}
Denote $M=\esssup_{Q_{\frac r2}\backslash Q_{\frac r4}}u$. Applying \Cref{lemma: caccioppoli with cutoff} and the fact that $g_u=g_{(M-u)_+}$ on $Q_{\frac r2}\backslash Q_{\frac r4}$, we have
\begin{equation*}
\begin{split}
    \int_{Q_{\frac r2}\backslash Q_{\frac r4}} g_u^p\dnu 
    & = \int_{Q_{\frac r2}\backslash Q_{\frac r4}} g_{(M-u)_+}^p\dnu 
     \leq \int_{Q_{\frac r2}} g_{(M-u)_+}^p\dnu \\
    & \leq \frac{c}{r^p} \int_{Q_{r}\backslash Q_{\frac r2}}\bigl((M-u)_+^p+( u+M)^{p-2}(M-u)_+^2\bigr) \dnu \\
    & \leq c \mu(B(x_0,r)) \esssup_{Q_{r}\backslash Q_{\frac r4}} u^p .
\end{split}
\end{equation*}
It follows that 
\begin{equation*}
    \int_{Q_{r/2}} g_u^{p-1}g_\varphi \dnu \leq c \mu(B(x_0,r)) \esssup_{Q_{r}\backslash Q_{r/4}} u^{p-1},
\end{equation*}
and
\begin{equation*}
    -\int_{Q_r} u^{p-1}\partial_t \varphi \dnu \leq c \mu(B(x_0,r)) \esssup_{Q_r\backslash Q_{\frac r4}} u^{p-1}.
\end{equation*}
Hence, from \eqref{eq: est from below for doubling} we conclude that
\begin{equation}
    \label{eq: lower diagonal estimate}
    \left(\frac{1}{\mu(B(x_0,r))}\right)^\frac{1}{p-1} \leq c \esssup_{Q_r\backslash Q_{\frac r4}} u.
\end{equation}
We claim that
\begin{equation}
    \label{eq: harnack for going forward}
    \esssup_{Q_{r}\backslash Q_{\frac r4}} u\leq c \essinf_{Q +} u,
\end{equation}
where 
\[
Q^+= B(x_0,2r)\times ((15r)^p+(1+2^{-p})(4r)^p, (16r)^p+2(4r)^p).
\]
To see this, let $(x,t)\in Q_{r}\backslash Q_{\frac r4}$ with 
\[
\esssup_{Q_{r}\backslash Q_{\frac r4}} u \leq 2 u(x,t).
\]
Since $u(x,s)=0$ for $s<0$, we conclude that $t>0$. Hence, $u$ is a  solution in $B(x,\frac r8)\times(t-(\frac r4)^p,T)$.

For $k\in\mathbb N$ let
\begin{equation*}
    Q_k= B(x,\tfrac{r}{32})\times (t+k(1+2^{-p})(\tfrac{r}{32})^p, t+k(1+2^{-p})(\tfrac{r}{32})^p+(\tfrac{r}{32})^p).
    \quad 
\end{equation*}
We apply the parabolic Harnack inequality to conclude that
\begin{equation*}
    \esssup_{Q_k}u\leq c_H \essinf_{Q_{k+1}}u,
\end{equation*}
for every $k\in\mathbb N$ and hence 
\begin{equation*}
    \esssup_{Q_{r}\backslash Q_{\frac r4}}u \leq 2u(x,t)\leq 2\esssup_{Q_0} u \leq 2 c_H^k \essinf_{Q_k} u.
\end{equation*}
Let $k$ be the smallest integer such that $k(1+2^{-p})(\frac{1}{32})^p>16^p$. Denote 
\begin{equation*}
    Q^-=B(x_0,4r) \times ((16r)^p, (16r)^p+(1-2^{-p})(4r)^p).
\end{equation*}
Since $0<t<r^p$ the choice of $k$ implies $Q_k \subset Q^-$.
Applying the parabolic Harnack inequality once more, we obtain
\begin{equation*}
    \esssup_{Q_{r}\backslash Q_{\frac r4}}\leq  c \essinf_{Q_k} u \leq c \esssup_{Q^-} u\leq c\essinf_{Q^+} u,
\end{equation*}
which shows that \eqref{eq: harnack for going forward} holds. Then, for $0<t<T$, we estimate
\begin{equation*}
\begin{split}
    \essinf_{B(x_0,2r)} u(t) & \leq c \biggl(\dashint_{B(x_0,2r)} u(t)^{p-1}\dmu\biggr)^\frac{1}{p-1} \\
    & \leq c \biggl(\frac{1}{\mu(B(x_0,2r))} \int_{X} u(t)^{p-1}\dmu\biggr)^\frac{1}{p-1} \\
    & \leq c \biggl(\frac{1}{\mu(B(x_0,2r))}\int_{X} f^{p-1}\dmu\biggr)^\frac{1}{p-1} \\
    & = c \biggl(\frac{1}{\mu(B(x_0,2r))}\biggr)^\frac{1}{p-1},
\end{split}
\end{equation*}
where we used \eqref{eq: mass conserv}. It follows that 
\begin{equation}
\label{eq: upper diagonal estimate}
    \essinf_{Q^+} u \leq c \biggl(\frac{1}{\mu(B(x_0,2r))}\biggr)^\frac{1}{p-1}.
\end{equation}
Combining \eqref{eq: lower diagonal estimate}, \eqref{eq: harnack for going forward} and \eqref{eq: upper diagonal estimate}, we find that
\begin{equation*}
    \biggl(\frac{1}{\mu(B(x_0,r))}\biggr)^\frac{1}{p-1} \leq c \esssup_{Q_r\backslash Q_{\frac r2}} u \leq c \essinf_{Q^+} u \leq c \biggl(\frac{1}{\mu(B(x_0,2r))}\biggr)^\frac{1}{p-1},
\end{equation*}
which proves the claim.
\end{proof}

\section{Parabolic Harnack implies Poincar\'e inequality}
\label{section: PHI implies Poincare}
In this section we show that the parabolic Harnack inequality in Definition \ref{def: PHI} implies that a $(1,p)$-Poincar\'e inequality holds in $(X,d,\mu)$.  We will make use of the fact that the parabolic Harnack inequality implies that $\mu$ is doubling, which was shown in \Cref{section: PHI implies VD}. We begin with the following Poincar\'e-type inequality for functions with zero boundary values.
\begin{proposition}
\label{prop: PHI implies local sobolev}
Assume that the parabolic Harnack inequality holds in $(X,d,\mu)$, that $N^{1,p}(X)$ is reflexive, and for $p\neq 2$ assume that the Rellich--Kondrachov property from \Cref{def: rellich kondrachov property} holds. Then there exist constants $c$ and $\gamma>0$, depending only on $c_H$ and $p$, such that 
\begin{equation}
    \label{eq: sobo type ineq from phi}
    \int_{B(x_0,r)} \lvert f\rvert^p\dmu  \leq c r^p \int_{B(x_0,r)} g_f^p\dmu,
\end{equation}
for every $f\in N^{1,p}_0(B(x_0,r))$, where $x_0\in X$ and $0<r\leq\gamma\diam X$. 
\end{proposition}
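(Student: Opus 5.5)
The plan is to run the Cauchy problem from \Cref{thm: Cauchy existence} with initial datum $u_0=f$. We combine the energy identity with the parabolic Harnack inequality. The Harnack inequality forces the $L^p$ norm to decay by a fixed factor over a time of order $r^p$, while the energy identity says this loss is controlled by $r^p\int g_f^p\dmu$. Since $g_{\lvert f\rvert}\le g_f$, we may assume $f\ge 0$; then $f\in N^{1,p}(X)$ vanishes outside $B(x_0,r)$. Let $u$ be the nonnegative solution given by \Cref{thm: Cauchy existence}, and fix $T=(Kr)^p$ with a large constant $K$ to be chosen. We also use throughout that $\mu$ is doubling, by \Cref{thm: PHI implies doubling}.

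\emph{Step 1 (energy identity).} Formally $u^{p-1}\partial_t u=\frac{1}{p}\partial_t(u^p)$. Testing \eqref{eq: var ineq for Neumann} with $\varphi=\pm h\,u_\delta$, cut off in time, and passing to the limit as in the proof of \Cref{lemma: caccioppoli with cutoff}, should give
\[
\frac{p-1}{p}\int_X f^p\dmu-\frac{p-1}{p}\int_X u(T)^p\dmu\le \int_0^T\int_X g_u^p\dmu\dt\le T\int_X g_f^p\dmu ,
\]
where the last inequality is \eqref{eq: neumann problem energy decrease}. Only this inequality direction is needed. Alternatively, one can read it off from the discrete scheme, summing \eqref{eq: tested var ineq for discretization} with $v=0$ and Young's inequality, before letting $k\to\infty$.

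\emph{Step 2 (decay of $\int u(T)^p$).} It suffices to show $\int_X u(T)^p\dmu\le\frac12\int_X f^p\dmu$, since Step 1 then yields \eqref{eq: sobo type ineq from phi} with $c=\frac{2p}{p-1}K^p$. Write $\int u(T)^p\le \sup u(T)\cdot\int u(T)^{p-1}$. By \eqref{eq: mass conserv} and H\"older's inequality,
\[
\int_X u(T)^{p-1}\dmu\le\int_X f^{p-1}\dmu\le \mu(B(x_0,r))^{1/p}\Bigl(\int_X f^p\dmu\Bigr)^{(p-1)/p}.
\]
For the supremum, argue as in the proof of \Cref{thm: PHI implies doubling}. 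Using a chain of parabolic Harnack inequalities forward in time at scale $R\simeq Kr$, we get
\[
u(x,T)\le c\,\essinf_{B(x,2R)}u(T')\le c\bigl(\mu(B(x,2R))^{-1}\textstyle\int_X f^{p-1}\dmu\bigr)^{1/(p-1)},
\]
where $T'$ is a later time. Combining the two bounds gives
\[
\int_X u(T)^p\dmu\le c\Bigl(\frac{\mu(B(x_0,r))}{\mu(B(x_0,2R))}\Bigr)^{1/(p-1)}\int_X f^p\dmu ,
\]
which is valid at least for $x$ near $x_0$. Choose $K$ large and $\gamma$ small so that $2R\le\frac13\diam X$. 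Then \eqref{eq: doubling implies radii control from above} makes the ratio at most $cK^{-\alpha}$, which is below $\frac12$ for $K$ large.

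\emph{Main obstacle.} The difficulty is making the $L^\infty$ bound uniform over all $x\in X$, not just $x$ near $x_0$, since for far $x$ the ball $B(x,2R)$ need not be comparable to $B(x_0,r)$. My first attempt is to localize the mass. Cover $X$ by balls $B_i=B(x_i,R)$ with bounded overlap and write $\int u(T)^p\le\sum_i \sup_{B_i}u(T)\int_{B_i}u(T)^{p-1}$. The Harnack chain bounds $\sup_{B_i}u(T)$ by $\bigl(\mu(B_i)^{-1}\int_{2B_i}u(T')^{p-1}\bigr)^{1/(p-1)}$. For balls meeting $B(x_0,2R)$, doubling makes $\mu(B_i)$ comparable to $\mu(B(x_0,2R))$, and the previous computation applies. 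For far balls, I would try to show that they carry little total mass at time $T$, or bound their contribution directly using $\sup u(T)\le c\bigl(\mu(B(x_0,2R))^{-1}\int f^{p-1}\dmu\bigr)^{1/(p-1)}$, obtained by chaining back toward $x_0$. A further technical point is that $u$ is a genuine solution only for $t>0$ (it is a supersolution across $t=0$ near $\supp f$). Hence the Harnack cylinders must lie in $t>0$, which dictates taking $T$ a large multiple of $R^p$, exactly as in \Cref{thm: PHI implies doubling}.
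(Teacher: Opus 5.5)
Your Step 1 is correct: testing \Cref{lemma: var sol implies weak sol with inequality} with $\zeta(t)u_\delta$ and discarding the nonnegative term $\int(u^{p-1}-u_\delta^{p-1})\zeta\,\partial_t u_\delta\dnu$ gives the direction you need. Step 2, however, has a genuine gap, and it is exactly the one you flag. Testing with $u$ produces the \emph{global} quantity $\int_X u(T)^p\dmu$. Controlling it needs a bound on $u(T)$ that is uniform over all of $X$, or tail control, and you do not establish either. Neither remedy works as stated:
\begin{itemize}
\item Smallness of the mass on far balls is an off-diagonal estimate that does not follow from the parabolic Harnack inequality, \eqref{eq: mass conserv} and \eqref{eq: neumann problem energy decrease}.
\item Chaining Harnack inequalities of scale $R$ from $x$, with $d(x,x_0)=D$, toward $x_0$ costs a factor $c_H^{D/R}$.
\item A single Harnack step of scale $\simeq D$ whose ball contains $x_0$ needs the equation on a time interval of length $\simeq D^p$ before $T$. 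That interval crosses $t=0$ on $\supp f$, where $u$ is only a supersolution.
\end{itemize}
Your route can be rescued. As in the proof of \Cref{thm: PHI implies doubling}, the zero extension of $u$ is a solution in $(X\setminus\overline{B(x_0,r)})\times(-\infty,T)$, so your claim that all Harnack cylinders must lie in $t>0$ is too strong. For $D>2r$ a Harnack cylinder of scale $D/8$ centered at $x$ may reach into $t<0$, which gives $u(x,T)\le c\bigl(\mu(B(x_0,D))^{-1}\int_X f^{p-1}\dmu\bigr)^{1/(p-1)}$. But this needs the solution up to times $\gtrsim D^p$ for every $D$. When $X$ is unbounded that means a global-in-time solution, which \Cref{thm: Cauchy existence} does not provide.

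The idea you are missing is to localize the test function rather than the solution. The paper tests \Cref{lemma: var sol implies weak sol with inequality} with $\varphi=\zeta_h(t)f$, where $\zeta_h$ is a time cutoff of $(0,\tau)$. Since $f$ vanishes outside $B=B(x_0,r)$, letting $h\to 0$ gives
\[
\int_B f^p\dmu\le\int_0^\tau\int_B g_u^{p-1}g_f\dmu\dt+\int_B u(\tau)^{p-1}f\dmu .
\]
Young's inequality, H\"older's inequality and \eqref{eq: neumann problem energy decrease} then yield $\dashint_B f^p\dmu\le c\tau\dashint_B g_f^p\dmu+\dashint_B u(\tau)^p\dmu$. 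Now only the average of $u(\tau)^p$ over $B(x_0,r)$ has to be small. Take $\tau=(4\sigma r)^p$. A single parabolic Harnack inequality centered at $x_0$ (its whole cylinder lies in $t>0$), mass conservation on $B(x_0,\sigma r)$ and reverse doubling give
\[
\dashint_B u(\tau)^p\dmu\le c\Bigl(\frac{\mu(B(x_0,r))}{\mu(B(x_0,\sigma r))}\Bigr)^{\frac{p}{p-1}}\dashint_B f^p\dmu\le\frac12\dashint_B f^p\dmu
\]
for $\sigma$ large. Absorption finishes the proof. No energy identity for $\int_X u^p\dmu$ and no global $L^\infty$ bound are needed.
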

\begin{proof}
Let $f\in N^{1,p}_0(B(x_0,r))$. Without loss of generality we may assume that $f$ is nonnegative. Let $u$ be the nonnegative solution of the Cauchy problem \eqref{eq: Cauchy problem}, given by \Cref{thm: Cauchy existence} with $u_0=f$ and $T>0$ is to be chosen later. Let $\tau>0$. For $h>0$, consider a cutoff function
\begin{equation*}
    \zeta_h(t)
        =\begin{cases}
            \frac{1}{h}(t-h) ,&\quad 0<t < h,\\
            1,&\quad h\leq t\leq \tau,\\
            \frac{1}{h}(\tau+h-t),&\quad \tau<t\leq \tau+h,\\
            0,&\quad\text{otherwise}.
        \end{cases}
\end{equation*}
Using \Cref{lemma: var sol implies weak sol with inequality} with $\varphi= \zeta_h f$ gives
\begin{equation*}
\begin{split}
    & \frac{1}{h}\int_{0}^{h}\int_{B(x_0,r)} u^{p-1}f\dmu\dt 
     \leq  \int_{B(x_0,r)\times(0,\tau)} g_u^{p-1} g_f\dnu\\
     &\qquad+ \frac{1}{h}\int_{\tau}^{\tau+h}\int_{B(x_0,r)}  u^{p-1}f\dmu\dt.
\end{split}
\end{equation*}
Letting $h\to 0$, we obtain
\begin{equation*}
    \int_{B(x_0,r)} f^p\dmu \leq  \int_{B(x_0,r)\times(0,\tau)} g_u^{p-1}g_f\dnu + \int_{B(x_0,r)}  u(\tau)^{p-1}f\dmu.
\end{equation*}
By Young's inequality 
\begin{equation*}
    \int_{B(x_0,r)} f^p\dmu \leq c\int_{B(x_0,r)\times(0,\tau)} g_u^{p-1}g_f\dnu + \int_{B(x_0,r)}  u(\tau)^p\dmu.
\end{equation*}
H\"older's inequality and \eqref{eq: neumann problem energy decrease} imply that
\begin{equation*}
    \dashint_{B(x_0,r)} f^p\dmu \leq c\tau\dashint_{B(x_0,r)} g_f^{p}\dmu + \dashint_{B(x_0,r)}  u(\tau)^p\dmu.
\end{equation*}
Let $\sigma >1$. Writing the previous estimate for $\tau=(4\sigma r)^p$, we have
\begin{equation}
    \label{eq: basic estimate for sobolev}
    \dashint_{B(x_0,r)} f^p\dmu \leq c\sigma^p r^p\dashint_{B(x_0,r)} g_f^{p}\dmu + \dashint_{B(x_0,r)}  u((4\sigma r)^p)^p\dmu.
\end{equation}
Let us estimate the second term on the right-hand side of the above. Let 
\begin{equation*}
    Q^-= B(x_0,\sigma r)\times ((4\sigma r)^p, (4\sigma r)^p+(1-2^{-p})(\sigma r)^p),
\end{equation*}
and
\begin{equation*}
    Q^+= B(x_0,\sigma r)\times ((4\sigma r)^p+(1+2^{-p})(\sigma r)^p, (4\sigma r)^p+2(\sigma r)^p)
\end{equation*}
The parabolic Harnack inequality implies
\begin{equation}
\label{eq: application of PHI for Sobolev}
\begin{split}
    \dashint_{B(x_0,r)}  u((4\sigma r)^p)^p\dmu  
    &\leq \esssup_{Q^-} u^p 
    \leq c \essinf_{Q^+} u^p \\
    &\leq c \biggl(\dashint_{Q^+} u^{p-1}\dnu\biggr)^{\frac{p}{p-1}}.
\end{split}
\end{equation}
For any $t>0$ we have 
\begin{equation*}
\begin{split}
    \int_{B(x_0,\sigma r)} u(t)^{p-1}\dmu
    &\leq \int_{X} u(t)^{p-1}\dmu \\
    &\leq \int_{X} u(0)^{p-1}\dmu
    = \int_{B(x_0,r)} f^{p-1}\dmu,
\end{split}
\end{equation*}
where we applied \eqref{eq: mass conserv}. Hence \eqref{eq: application of PHI for Sobolev} implies 
\begin{equation}
\begin{split}
    \dashint_{B(x_0,r)}  u((4\sigma r)^p)^p\dmu 
    &\leq c \biggl(\frac{\mu(B(x_0,r)}{\mu(B(x_0,\sigma r))}\biggr)^\frac{p}{p-1} \biggl(\dashint_{B(x_0,r)} f^{p-1}\dmu\biggr)^{\frac{p}{p-1}}\\
    &\leq c \biggl( \frac{\mu(B(x_0,r)}{\mu(B(x_0,\sigma r))}\biggr)^\frac{p}{p-1}  \dashint_{B(x_0,r)} f^{p}\dmu.
\end{split}
\end{equation}
Since $\mu$ is doubling, \eqref{eq: doubling implies radii control from above} implies that there exists a constant $\alpha>0$, depending only on $c_H$, such that
\begin{equation*}
    \dashint_{B(x_0,r)}  u((4\sigma r)^p)^p\dmu\leq c \left( \frac{1}{\sigma}\right)^\frac{\alpha p}{p-1}  \dashint_{B(x_0,r)} f^{p}\dmu,
\end{equation*}
provided that $0<\sigma r <\frac13\diam(X)$. Hence, choosing $\sigma $ large enough we have
\begin{equation*}
    \dashint_{B(x_0,r)} u((4\sigma r)^p)^p\dmu\leq \frac{1}{2} \dashint_{B(x_0,r)} f^p\dmu,
\end{equation*}
and by \eqref{eq: basic estimate for sobolev} we conclude that
\begin{equation*}
    \dashint_{B(x_0,r)} f^p\dmu \leq cr^p \dashint_{B(x_0,r)} g_f^p\dmu + \frac{1}{2} \dashint_{B(x_0,r)} f^p\dmu.
\end{equation*}
Absorbing the second term on the right-hand side completes the proof.
\end{proof}

In the following, we will upgrade the Poincar\'e-type inequality for functions with zero boundary values in \Cref{prop: PHI implies local sobolev} to the Poincar\'e inequality in \Cref{def: poincare} by using the stationary solutions of the doubly nonlinear equation \eqref{eq: PDE}. This leads to the elliptic $p$-Laplace equation, formally written as
\begin{equation}
    \label{eq: p-Laplace}
    -\div (\lvert \nabla u\rvert^{p-2}\nabla u)=0.
\end{equation}
We consider variational solutions of \eqref{eq: p-Laplace} in the context of metric measure spaces.
\begin{definition}
Assume that $\Omega$ is an open set in $X$ and let $1<p<\infty$.
A function $u\in N^{1,p}(\Omega)$ is a solution of \eqref{eq: p-Laplace} in $\Omega$ if 
\begin{equation*}
    \int_{\Omega} g_u^p \dmu\leq \int_{\Omega} g_{u+\varphi}^p\dmu
\end{equation*}
for every $\varphi\in N^{1,p}_0(\Omega)$.
\end{definition}

We note that time invariant solutions of the doubly nonlinear equation \eqref{eq: PDE} are solutions to the $p$-Laplace equation
\eqref{eq: p-Laplace}.
Thus the parabolic Harnack inequality in Definition \ref{def: PHI} immediately implies the elliptic Harnack inequality below.

\begin{definition}
The elliptic Harnack inequality holds in $(X,d,\mu)$ if there exists a constant $c_H>0$ such that whenever $u$ is a nonnegative solution of \eqref{eq: p-Laplace} in $B(x_0,4r)$, then 
\begin{equation}
    \label{eq: elliptic harnack}
    \esssup_{B(x_0,r)} u \leq c_H\essinf_{B(x_0,r)} u.
\end{equation}
\end{definition}

The following lemma on the H\"older continuity of solutions to \eqref{eq: p-Laplace} is well known. We include a short proof to show that the result also holds in the setting of metric measure spaces.
The main advantage of considering the elliptic $p$-Laplace equation instead of the doubly nonlinear equation \eqref{eq: PDE} is that the class of solutions is invariant under the addition of constants, that is, adding a constant to a solution gives another solution of the same equation. This property is employed in the H\"older continuity result below. 

\begin{lemma}
\label{lemma: harnack implies Holder}
Assume that the elliptic Harnack inequality holds in $(X,d,\mu)$. Then there exist constants $c>0$ and $0<\kappa\leq 1$, depending only on $c_H$, such that
\begin{equation*}
    \essosc_{B(x_0,r)} u \leq c \left(\frac{r}{R}\right)^\kappa \essosc_{B(x_0,R)} u,
\end{equation*}
whenever $u$ is a solution of $\eqref{eq: p-Laplace}$ in $B(x_0,R)$.
\end{lemma}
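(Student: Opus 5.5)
The argument is the classical Moser oscillation-decay scheme. It uses only the elliptic Harnack inequality \eqref{eq: elliptic harnack} and the fact that the solution class of \eqref{eq: p-Laplace} is invariant under adding constants and under multiplication by $-1$.

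Both invariances hold because $g_{c-u}=g_u$ and $g_{u+c}=g_u$. If $u$ is a solution, then for every $\varphi\in N^{1,p}_0(\Omega)$ the function $-\varphi$ is also admissible, so $c-u$ and $u+c$ are again solutions.

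Fix $\rho$ with $4\rho\le R$ and set
\[
M(\rho)=\esssup_{B(x_0,\rho)} u,\qquad m(\rho)=\essinf_{B(x_0,\rho)} u .
\]
If $\essosc_{B(x_0,R)}u=\infty$ there is nothing to prove, so we may assume $u$ is essentially bounded on $B(x_0,4\rho)$. Then $M(4\rho)-u$ and $u-m(4\rho)$ are nonnegative solutions in $B(x_0,4\rho)$.

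Applying \eqref{eq: elliptic harnack} to each of them on $B(x_0,\rho)$ gives
\[
M(4\rho)-m(\rho)\le c_H\bigl(M(4\rho)-M(\rho)\bigr),\qquad M(\rho)-m(4\rho)\le c_H\bigl(m(\rho)-m(4\rho)\bigr).
\]
Adding the two inequalities and writing $\omega(\rho)=M(\rho)-m(\rho)$ yields
\[
\omega(4\rho)+\omega(\rho)\le c_H\bigl(\omega(4\rho)-\omega(\rho)\bigr),
\qquad\text{hence}\qquad
\omega(\rho)\le \frac{c_H-1}{c_H+1}\,\omega(4\rho)=:\lambda\,\omega(4\rho),
\]
with $\lambda<1$. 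We may assume $c_H\ge1$; if $c_H<1$, the Harnack inequality applied to a positive constant solution is violated, and the statement is trivial or vacuous.

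For the iteration, given $0<r\le R$ choose $k\ge0$ with $4^{-k-1}R<r\le 4^{-k}R$. Iterating the decay estimate $k$ times gives
\[
\omega(r)\le\omega(4^{-k}R)\le\lambda^k\omega(R).
\]
Let $\kappa=\min\{1,\log_4(1/\lambda)\}$, with $\kappa=1$ when $\lambda=0$. Then $\lambda^k\le 4^{-k\kappa}\le 4^{\kappa}(r/R)^\kappa$. This gives the claim with $c=4$.

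Two points need care, though neither is a real obstacle.

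The first is that the Harnack inequality is applied to essential suprema and infima on balls, so $M(\rho)$ and $m(\rho)$ must be taken as essential quantities throughout. This is routine.

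The second is that the solution must be defined on $B(x_0,4\rho)\subset B(x_0,R)$. The choice $4\rho\le R$ guarantees this.

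Boundedness of $u$ on $B(x_0,R)$ is assumed in the reduction above. If boundedness only holds on strictly smaller balls, we apply the same argument on $B(x_0,R')$ with $R'<R$ and let $R'\to R$.
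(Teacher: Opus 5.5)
Your proposal is correct and is essentially the paper's argument. You apply the elliptic Harnack inequality to $\esssup_{B(x_0,4\rho)}u-u$ and $u-\essinf_{B(x_0,4\rho)}u$ on the quarter ball, add the two inequalities to get $\essosc_{B(x_0,\rho)}u\le\frac{c_H-1}{c_H+1}\essosc_{B(x_0,4\rho)}u$, and iterate along the radii $4^{-k}R$; your explicit cap $\kappa=\min\{1,\log_4(1/\lambda)\}$ and the observation that constant solutions force $c_H\ge 1$ are small bookkeeping points the paper leaves implicit.
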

\begin{proof}
Let $r_k = 4^{-k} R$, $k\in\mathbb N$. Applying \eqref{eq: elliptic harnack} to
\[
u-\essinf_{B(x_0,r_k)} u
\quad\text{and}\quad
\esssup_{B(x_0,r_k)} u - u,
\]
which both are nonnegative solutions of $\eqref{eq: p-Laplace}$ in $B(x_0,r_k)$, we obtain
\begin{equation}
    \label{eq: holder eq 1 from harnack}
    \esssup_{B(x_0,r_{k+1})} u - \essinf_{B(x_0,r_k)} u 
    \leq c \bigl(\essinf_{B(x_0,r_{k+1})} u - \essinf_{B(x_0,r_k)} u\bigr), 
\end{equation}
and 
\begin{equation}
    \label{eq: holder eq 2 from harnack}
    \esssup_{B(x_0,r_{k})} u - \essinf_{B(x_0,r_{k+1})} u 
    \leq c \bigl(\esssup_{B(x_0,r_{k})} u - \esssup_{B(x_0,r_{k+1})} u\bigr),
\end{equation}
for a constant $c>1$.
Summing up \eqref{eq: holder eq 1 from harnack} and \eqref{eq: holder eq 2 from harnack} gives
\begin{equation*}
    \essosc_{B(x_0,r_{k+1})} u + \essosc_{B(x_0,r_{k})} u 
    \leq c\bigl(\essosc_{B_(x_0,r_k)} u - \essosc_{B(x_0,r_{k+1})}u\big),
\end{equation*}
which implies 
\begin{equation}
    \label{eq: osc decay holder}
    \essosc_{B(x_0,r_{k+1})} u \leq \frac{c-1}{c+1} \essosc_{B(x_0,r_{k})} u.
\end{equation}
Let $\gamma=\frac{c-1}{c+1}$. 
Applying \eqref{eq: osc decay holder} recursively, we obtain
\begin{equation*}
    \essosc_{B(x_0,r_k)} u \leq \gamma^k \essosc_{B(x_0,R)} u. 
\end{equation*}
For $0<r\leq R$ we let $k$ be such that $r_k < r\leq r_{k-1}$, and let $\alpha=-\frac{\log \gamma}{\log 4}$. Then
\begin{equation*}
\begin{split}
    \essosc_{B(x_0,r)} u 
    &\leq \essosc_{B(x_0,r_k)} u \leq \gamma^{k-1} \essosc_{B(x_0,R)} u \\
    &= \gamma^{-1} \left( \frac{r_k}{R} \right)^\alpha \essosc_{B(x_0,R)} u \leq \gamma^{-1} \left( \frac{r}{R}\right)^\alpha \essosc_{B(x_0,R)} u,
\end{split}
\end{equation*}
which proves the claim.
\end{proof}

Next, we consider a more general form of the Poincar\'e-type inequality for zero boundary values which was proved in \Cref{prop: PHI implies local sobolev}, for a parameter $1<q<\infty$. We combine this inequality with the elliptic Harnack inequality and volume doubling to show an estimate which allows to localize arbitrary Sobolev functions.
\begin{lemma}
\label{Poincare with small ball on RHS}
Assume that $\mu$ is doubling and that the elliptic Harnack inequality holds in $(X,d,\mu)$. Let $1<q<\infty$ and assume that there exist constants $c>0$ and $\gamma>0$ such that
\begin{equation}\label{eq: local sobolev inequality}
    \biggl(\dashint_{B(x_0,r)} \lvert u\rvert^q \dmu\biggr)^\frac{1}{q} \leq cr \biggl(\dashint_{B(x_0,r)} g_u^p \dmu\biggr)^\frac{1}{p}
\end{equation}
for every $u\in N^{1,p}_0(B(x_0,r))$, where $x_0\in X$ and $0<r\leq\gamma\diam(X)$. Then there exists a constant $c$ depending on $p$, $q$, $c_H$, $c_\mu$ and the constant in \eqref{eq: local sobolev inequality}, such that for any $0<\delta<1$ we have
\begin{equation*}
    \biggl(\dashint_{B(x_0,r)}f^q\dmu\biggr)^\frac{1}{q} \leq c r \delta^{-Q} \biggl(\dashint_{B(x_0,4r)} g_f^p\dmu\biggr)^\frac{1}{p}+c \dashint_{B(x_0,\delta r)} f\dmu,
\end{equation*}
for any nonnegative $f\in N^{1,p}(B(x_0,4r))$, where $0<4r\leq \gamma \diam (X)$ and $Q$ is as in \eqref{eq: doubling implies radii control from below}.
\end{lemma}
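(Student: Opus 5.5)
The plan is to split $f$ into a $p$-harmonic part and a part with zero boundary values on $B(x_0,4r)$. Let $h$ be the solution of \eqref{eq: p-Laplace} in $B(x_0,4r)$ with boundary values $f$, that is, $h-f\in N^{1,p}_0(B(x_0,4r))$. Existence follows from the direct method: minimize $\int g_v^p$ over $f+N^{1,p}_0(B(x_0,4r))$, where coercivity in $L^p$ is supplied by \eqref{eq: local sobolev inequality} (for $q\ge p$, or via a weaker exponent after H\"older) and lower semicontinuity by reflexivity. By the comparison principle, or by replacing $h$ with $\max\{h,0\}$, which does not increase the energy and keeps the boundary values since $f\ge0$, we may take $h\ge 0$. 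Minimality gives $\int_{B(x_0,4r)} g_h^p\dmu\le \int_{B(x_0,4r)} g_f^p\dmu$. Hence $w=f-h$ satisfies $g_w\le g_f+g_h$ and $\int g_w^p\le 2^p\int_{B(x_0,4r)} g_f^p$.

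For the zero-boundary part, apply \eqref{eq: local sobolev inequality} on $B(x_0,4r)$ (allowed since $4r\le\gamma\diam X$) and use doubling to pass to smaller balls. This gives
\begin{equation*}
\biggl(\dashint_{B(x_0,r)}\lvert w\rvert^q\dmu\biggr)^{\frac1q}\le c\biggl(\dashint_{B(x_0,4r)}\lvert w\rvert^q\dmu\biggr)^{\frac1q}\le c r\biggl(\dashint_{B(x_0,4r)} g_f^p\dmu\biggr)^{\frac1p},
\end{equation*}
and likewise, with a factor $\mu(B(x_0,4r))/\mu(B(x_0,\delta r))\le c\delta^{-Q}$ from \eqref{eq: doubling implies radii control from below},
\begin{equation*}
\dashint_{B(x_0,\delta r)}\lvert w\rvert\dmu\le c\,\delta^{-Q/q}\biggl(\dashint_{B(x_0,4r)}\lvert w\rvert^q\dmu\biggr)^{\frac1q}\le c\,\delta^{-Q}r\biggl(\dashint_{B(x_0,4r)} g_f^p\dmu\biggr)^{\frac1p}.
\end{equation*}

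For the harmonic part, $h\ge0$ solves \eqref{eq: p-Laplace} in $B(x_0,4r)$. The elliptic Harnack inequality gives $\esssup_{B(x_0,r)}h\le c_H\essinf_{B(x_0,r)}h$. Since $B(x_0,\delta r)\subset B(x_0,r)$,
\[
\biggl(\dashint_{B(x_0,r)}h^q\dmu\biggr)^{\frac1q}\le \esssup_{B(x_0,r)}h\le c_H\essinf_{B(x_0,r)}h\le c_H\dashint_{B(x_0,\delta r)}h\dmu.
\]
Here the Harnack inequality alone suffices and Lemma \ref{lemma: harnack implies Holder} is not needed; one should only check that the Harnack balls fit inside $B(x_0,4r)$, which the $4r$ scaling ensures.

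Combining these, write $f=h+w$. Minkowski gives $\|f\|_{L^q(B(x_0,r))}$ (averaged) $\le \|h\|+\|w\|$. Then $\dashint_{B(x_0,\delta r)}h\le \dashint_{B(x_0,\delta r)}f+\dashint_{B(x_0,\delta r)}\lvert w\rvert$, and inserting the two bounds above yields the claim with the $\delta^{-Q}$ factor (using $\delta^{-Q/q}\le\delta^{-Q}$).

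The main technical point is the construction of $h$ and its nonnegativity and energy bound in the metric setting, namely existence of the Dirichlet minimizer. I would handle it with the direct method as above, since \eqref{eq: local sobolev inequality} provides the Poincar\'e-type coercivity needed on $N^{1,p}_0(B(x_0,4r))$.
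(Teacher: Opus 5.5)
Your proposal is correct and matches the paper's proof step for step: the $p$-harmonic replacement $h$ of $f$ in $B(x_0,4r)$, Minkowski's inequality, the elliptic Harnack inequality to bound $\bigl(\dashint_{B(x_0,r)}h^q\dmu\bigr)^{1/q}$ by $c\dashint_{B(x_0,\delta r)}h\dmu$, and \eqref{eq: local sobolev inequality} applied to $f-h$ together with energy minimality and \eqref{eq: doubling implies radii control from below}. The paper takes the existence and nonnegativity of $h$ for granted, and your $h_+$ argument for nonnegativity is a useful addition; note that your coercivity argument for existence works as written only when $q\ge p$, since H\"older cannot turn an $L^q$ bound into an $L^p$ bound when $q<p$, but this covers the case $q=p$ used in the main theorem.
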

\begin{proof}
We may assume that $f$ is bounded. Let $u$ be the solution of \eqref{eq: p-Laplace} in $B(x_0,4r)$ with $u-f\in N^{1,p}_0(B(x_0,4r))$. We note that
\begin{equation*}
    \biggl(\dashint_{B(x_0,r)}f^q\dmu\biggr)^\frac{1}{q} \leq \biggl( \dashint_{B(x_0,r)}\lvert f-u\rvert^q\dmu \biggr)^\frac{1}{q}+ \biggl(\dashint_{B(x_0,r)}u^q\dmu \biggr)^\frac{1}{q}.
\end{equation*}
Using the elliptic Harnack inequality, we estimate the second term on the right-hand side by
\begin{equation*}
\begin{split}
    \biggl(\dashint_{B(x_0,r)}u^q\dmu \biggr)^\frac{1}{q} 
    &\le\esssup_{B(x_0,r)}u
    \leq c \essinf_{B(x_0,r)}u 
     \leq c \essinf_{B(x_0,\delta r)}u \\
    &\leq c\dashint_{B(x_0,\delta r)} u \dmu 
    \leq c \dashint_{B(x_0,\delta r)} \bigl(\lvert u-f\rvert +f\bigr)\dmu.
\end{split}
\end{equation*}
By \eqref{eq: doubling implies radii control from below} we obtain 
\begin{equation}
\label{eq: localizing reverse holder before sobolev}
\begin{split}
    \biggl(\dashint_{B(x_0,r)}f^q\dmu\biggr)^\frac{1}{q} 
    & \leq c \biggl( \dashint_{B(x_0,r)}\lvert f-u\rvert^q\dmu\biggr)^\frac{1}{q}\\
    &\qquad+ c\dashint_{B(x_0,\delta r)}\lvert f-u\rvert\dmu 
    + c \dashint_{B(x_0,\delta r)}f\dmu  \\
    & \leq c \delta^{-Q} \biggl( \dashint_{B(x_0,4r)}\lvert f-u\rvert^q\dmu \biggr)^\frac{1}{q} + c\dashint_{B(x_0,\delta r)}f\dmu.
\end{split}
\end{equation}
Applying \eqref{eq: local sobolev inequality} to $f-u$ and using the minimizing property of $g_u$ gives
\begin{equation*}
\begin{split}
    &\biggl( \dashint_{B(x_0,4r)}\lvert f-u\rvert^q\dmu \biggr)^\frac{1}{q} 
     \leq c r \biggl(\dashint_{B(x_0,4r)}g_{f-u}^p\dmu\biggr)^\frac{1}{p} \\
    &\qquad \leq c r \biggl(\dashint_{B(x_0,4r)}(g_f^p +g_u^p)\dmu\biggr)^\frac{1}{p} 
    \leq c r \biggl(\dashint_{B(x_0,4r)}g_f^p\dmu\biggr)^\frac{1}{p}.
\end{split}
\end{equation*}
The claim follows by combining the estimate above with \eqref{eq: localizing reverse holder before sobolev}.
\end{proof}

In the following result we show that the full Poincar\'e inequality holds under the assumptions of  \Cref{Poincare with small ball on RHS}.
\begin{theorem}
\label{thm: sobo doubling ehi implies poincare}
Let $(X,d,\mu)$ be as in the statement of \Cref{Poincare with small ball on RHS}. Then a $(q,p)$-Poincar\'e inequality holds in $(X,d,\mu)$, with constants $c_P$ and $\tau$ depending only on $p$, $c_\mu$, $c_H$ and the constants in \eqref{eq: local sobolev inequality}. 
\end{theorem}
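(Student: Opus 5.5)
The plan is to reduce the statement to \Cref{lemma: charact of poinc}, which turns a reverse H\"older-type inequality for nonnegative functions with constant $(1+\epsilon)$ in front of the mean into a $(q,p)$-Poincar\'e inequality. \Cref{Poincare with small ball on RHS} already gives an estimate of the right shape:
\[
\Bigl(\dashint_{B(x_0,r)}f^q\dmu\Bigr)^{\frac1q}\le c\,r\,\delta^{-Q}\Bigl(\dashint_{B(x_0,4r)}g_f^p\dmu\Bigr)^{\frac1p}+c\dashint_{B(x_0,\delta r)}f\dmu .
\]
Its defect is that the constant $c$ in front of the average is large and the average is over the small ball $B(x_0,\delta r)$ rather than over $B(x_0,r)$. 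So the main work is to trade this for an average over $B(x_0,r)$ with constant close to $1$.

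First I would use the H\"older continuity from \Cref{lemma: harnack implies Holder}, which applies because the elliptic Harnack inequality is assumed. Let $u$ be the $p$-harmonic function in $B(x_0,4r)$ with $u-f\in N^{1,p}_0(B(x_0,4r))$, as in the proof of \Cref{Poincare with small ball on RHS}. Write $f=(f-u)+u$. The part $f-u$ is controlled in $L^q(B(x_0,r))$ by $c\,r\,(\dashint_{B(x_0,4r)}g_f^p\dmu)^{1/p}$, using \eqref{eq: local sobolev inequality} and doubling. For $u$, I would bound $\esssup_{B(x_0,r)}u-\essinf_{B(x_0,r)}u$ by a small multiple of something controllable. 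Specifically, I would rerun the construction at radius $Rr$ with $R$ large, taking $u$ harmonic in $B(x_0,4Rr)$. Then $\essosc_{B(x_0,r)}u\le cR^{-\kappa}\essosc_{B(x_0,Rr)}u$. By the elliptic Harnack inequality, $\essosc_{B(x_0,Rr)}u\le \esssup_{B(x_0,Rr)}u\le c\essinf_{B(x_0,Rr)}u\le c\dashint_{B(x_0,r)}u\dmu$. Hence $\esssup_{B(x_0,r)}u\le (1+cR^{-\kappa})\dashint_{B(x_0,r)}u\dmu$. Choosing $R$ large makes $1+cR^{-\kappa}\le 1+\epsilon$, with $\epsilon$ chosen so that $(1+2\epsilon)2^{-(1-1/q)}<1$; this is possible since $q>1$. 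Then
\[
\Bigl(\dashint_{B(x_0,r)}u^q\dmu\Bigr)^{\frac1q}\le(1+\epsilon)\dashint_{B(x_0,r)}u\dmu\le(1+\epsilon)\dashint_{B(x_0,r)}f\dmu+(1+\epsilon)\dashint_{B(x_0,r)}|f-u|\dmu .
\]
The last term is again bounded through \eqref{eq: local sobolev inequality} on $B(x_0,4Rr)$ by $c\,r\,(\dashint_{B(x_0,4Rr)}g_f^p\dmu)^{1/p}$, since $\mu(B(x_0,4Rr))/\mu(B(x_0,r))$ is bounded by doubling with $R$ fixed. The minimality of $g_u$ gives $\int g_{f-u}^p\le c\int g_f^p$, as before.

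Collecting terms yields \eqref{eq: charact of Poincare as reverse Holder} with $\tau'=4R$ and the chosen $\epsilon$, valid for $0<r\le \delta\diam X$ with $\delta=\gamma/(4R)$. I would need to assume $f$ is bounded and nonnegative, and reduce to this case by truncation if required. \Cref{lemma: charact of poinc} then gives the $(q,p)$-Poincar\'e inequality, with constants depending on $p$, $q$, $c_\mu$, $c_H$ and the constants in \eqref{eq: local sobolev inequality}.

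The main obstacle is getting the constant in front of the mean arbitrarily close to $1$. Harnack alone only gives a constant $c_H$, which is useless in \Cref{lemma: charact of poinc}. The decisive step is therefore to combine Harnack at the large scale $Rr$ with the oscillation decay of \Cref{lemma: harnack implies Holder} down to scale $r$. This is also why \Cref{Poincare with small ball on RHS} may not be needed directly; its role is replaced by this oscillation argument. The remaining care is making sure all balls used stay within the range $r\le\gamma\diam X$ where \eqref{eq: local sobolev inequality} holds.
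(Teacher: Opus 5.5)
Your argument is correct, but it is organized differently from the paper's. The paper keeps the harmonic replacement $u$ on $B(x_0,4r)$ and works inward. It applies \Cref{Poincare with small ball on RHS} to $\lvert f-\essinf_{B(x_0,\delta r)}u\rvert$. This bounds $\bigl(\dashint_{B(x_0,r)}\lvert f-f_{B(x_0,r)}\rvert^q\dmu\bigr)^{1/q}$ by $c\delta^{-Q}r\bigl(\dashint_{B(x_0,4r)}g_f^p\dmu\bigr)^{1/p}+c\,\essosc_{B(x_0,\delta r)}u$. It then uses \Cref{lemma: harnack implies Holder} and Harnack on $B(x_0,r)$ to make the oscillation term at most $c\delta^{\kappa}\dashint_{B(x_0,r)}f\dmu$, up to gradient terms. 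Finally it adds back $f_{B(x_0,r)}$, which produces the constant $1+c\delta^{\kappa}$.

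You instead rescale outward. You take the harmonic replacement on $B(x_0,4Rr)$, apply Harnack on $B(x_0,Rr)$, and use oscillation decay from scale $Rr$ down to $r$. This gives directly $\esssup_{B(x_0,r)}u\le(1+cR^{-\kappa})\dashint_{B(x_0,r)}u\dmu$. So the constant $1+\epsilon$ appears without the intermediate oscillation estimate for $f$ and without \Cref{Poincare with small ball on RHS}.

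What your route costs is a large dilation $\tau'=4R$ and a reduced admissible range $r\le\gamma\diam(X)/(4R)$, with $R$ depending on $q$, $c_H$ and $\kappa$. Both are harmless, since \Cref{lemma: charact of poinc} (via \Cref{lemma: Poincare for smaller balls implies poincare}) accepts any fixed $\tau'$ and $\delta$. The paper's route keeps $\tau'=4$ and the range $4r\le\gamma\diam(X)$. It pays instead with the factor $\delta^{-Q}$ in front of the gradient term, which plays exactly the role of your doubling factor $\bigl(\mu(B(x_0,4Rr))/\mu(B(x_0,r))\bigr)^{1/q}$.
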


\begin{proof}
We will use the characterization in \Cref{lemma: charact of poinc} and show that \eqref{eq: charact of Poincare as reverse Holder} holds. Let $f\in N^{1,p}(B(x_0,4r))$ be a nonnegative function, where $0<4r\leq \gamma \diam (X)$. We may assume that $f$ is bounded. Let $u$ be the unique  solution of \eqref{eq: p-Laplace} with $u-f\in N^{1,p}_0(B(x_0,4r))$. Let $0<\delta<1$. Applying \Cref{Poincare with small ball on RHS} to $\lvert f-\essinf_{B(x_0,\delta r)} u\rvert$ we obtain
\begin{equation*}
\begin{split}
    &\biggl(\dashint_{B(x_0,r)} \lvert f-f_{B(x_0,r)}\rvert^q\dmu\biggr)^\frac{1}{q}  \leq 2\biggl(\dashint_{B(x_0,r)} \lvert f-\essinf_{B(x_0,\delta r)} u\rvert^q\dmu\biggr)^\frac{1}{q} \\
    &\qquad \leq c \delta^{-Q} r \biggl(\dashint_{B(x_0,4r)}g_f^p\dmu\biggr)^\frac{1}{p} + c \dashint_{B(x_0,\delta r)} \lvert f-\essinf_{B(x_0,\delta r)} u\rvert\dmu\\
    &\qquad\leq c \delta^{-Q} r \biggl( \dashint_{B(x_0,4r)}g_f^p\dmu\biggr)^\frac{1}{p} + c \dashint_{B(x_0,\delta r)} (u-\essinf_{B(x_0,\delta r)} u)\dmu \\
    &\qquad \qquad + c \dashint_{B(x_0,\delta r)} \lvert f-u\rvert\dmu \\
    &\qquad \leq c \delta^{-Q} r \biggl( \dashint_{B(x_0,4r)}g_f^p\dmu\biggr)^\frac{1}{p}+ c \essosc_{B(x_0,\delta r)} u.
\end{split}
\end{equation*}
In the last step we used \eqref{eq: doubling implies radii control from below} and \eqref{eq: local sobolev inequality}. By \Cref{lemma: harnack implies Holder} and the elliptic Harnack inequality we find that
\begin{equation*}
\begin{split}
    \essosc_{B(x_0,\delta r)} u 
    &\leq c \delta^{\kappa} \essosc_{B(x_0, r)} u
    \leq c \delta^{\kappa}\esssup_{B(x_0, r)} u 
    \leq c \delta^{\kappa} \dashint_{B(x_0,r)}u\dmu\\
    &\leq c \delta^{\kappa} \dashint_{B(x_0,r)}(\lvert f-u\rvert+ f)\dmu.
\end{split}
\end{equation*}
Applying again \eqref{eq: local sobolev inequality} to $f-u$ we have
\begin{equation*}
\begin{split}
    &\biggl(\dashint_{B(x_0,r)} \lvert f-f_{B(x_0,r)}\rvert^q\dmu\biggr)^\frac{1}{q}\\
    &\qquad\leq c \delta^{-Q} r \biggl(\dashint_{B(x_0,4r)}g_f^p\dmu \biggr)^\frac{1}{p}
    + c \delta^{\kappa}  \dashint_{B(x_0,r)}(\lvert u -f\rvert + f)\dmu \\ 
    &\qquad \leq c \delta^{-Q} r \biggl(\dashint_{B(x_0,4r)}g_f^p\dmu \biggr)^\frac{1}{p} + c \delta^{\kappa} \dashint_{B(x_0,r)} f\dmu.
\end{split}
\end{equation*}
Finally, we conclude that
\begin{equation*}
\begin{split}
    \biggl(\dashint_{B(x_0,r)} f^q\dmu\biggr)^\frac{1}{q} & \leq \biggl(\dashint_{B(x_0,r)} \lvert f-f_{B(x_0,r)}\rvert^q\dmu\biggr)^\frac{1}{q}+\dashint_{B(x_0,r)} f\dmu \\ 
    & \leq c \delta^{-Q} r \biggl(\dashint_{B(x_0,4r)}g_f^p\dmu\biggr)^\frac{1}{p} + (1+c \delta^{\kappa}) \dashint_{B(x_0,r)} f\dmu.
\end{split}
\end{equation*}
By choosing $\delta>0$ to be small enough we have shown that \eqref{eq: charact of Poincare as reverse Holder} holds.
It follows from \Cref{lemma: charact of poinc}  that a  $(q,p)$-Poincar\'e inequality holds in $(X,d,\mu)$.
\end{proof}
We are now ready to prove our main result.
\begin{proof}[Proof of \Cref{thm: main thm}]
The claim that $(1)$ implies $(2)$ is contained in \Cref{thm: PHI}. 
To prove that $(2)$ implies $(1)$, assume that the parabolic Harnack inequality holds. That $\mu$ is doubling follows from \Cref{thm: PHI implies doubling}. Combining \Cref{prop: PHI implies local sobolev} and \Cref{thm: sobo doubling ehi implies poincare} with $q=p$, we conclude that a $(p,p)$-Poincar\'e inequality holds in $(X,d,\mu)$, from which a $(1,p)$-Poincar\'e inequality follows immediately. 
\end{proof}


\end{document}